\newtheorem{thm}{Theorem}[section]
\newtheorem{lem}[thm]{Lemma}
\newtheorem{cor}[thm]{Corollary}
\newtheorem{rem}[thm]{Remark}
\newtheorem{dfn}[thm]{Definition}
\newtheorem{exa}[thm]{Example}
\newtheorem{quest}[thm]{Question}
\newtheorem*{Satz*}{Satz}
\newtheorem{task}{Task}
\newtheorem{ass}{Assumption}
\newcommand{\mathset}[1]{{\left\{#1\right\}}}
\newcommand{\absolute}[1]{\left\lvert#1\right\rvert}
\newcommand{\norm}[1]{\left\|#1\right\|}
\DeclareMathOperator{\Star}{Star}
\DeclareMathOperator{\id}{id}
\DeclareMathOperator{\PGL}{PGL}
\DeclareMathOperator{\SL}{SL}
\DeclareMathOperator{\Cov}{Cov}
\DeclareMathOperator{\dom}{dom}
\DeclareMathOperator{\Res}{Res}
\DeclareMathOperator{\Tr}{Tr}
\DeclareMathOperator{\Sp}{Sp}
\DeclareMathOperator{\Max}{Max}
\DeclareMathOperator{\Red}{Red}
\title{Heat equations and wavelets on Mumford curves}
\author{Patrick Erik Bradley}
\date{\today}
\begin{document}

\maketitle


\begin{abstract}
A general class of heat operators over non-archimedean local fields
acting on $L_2$-function spaces on affinoid domains in the local field are developed. $L_2$-spaces and integral operators invariant under the action of a non-archimedean Schottky group are constructed in order to have
nice function spaces and operators on
Mumford curves. General wavelets are constructed which, together with functions coming from certain graph Laplacian eigenvectors, diagonalise these operators.
The corresponding Cauchy problems for the heat equations with these operators are solved in the affirmative, and properties of wavelet eigenvalues and the spectral gaps of the operators on Mumford curves are studied.
\end{abstract}

\section{Introduction}

Heat equations in different flavours are of great interest from various different viewpoints. In an Archimedean setting, they are extensively  treated in the literature. 
Ever since the forthcoming of the Taibleson-Vladimirov operator, a pseudodifferential operator on a non-archimedean local field \cite{Taibleson1975,VVZ1994}, there has been active research on heat equations on such a field, in particular the field of $p$-adic numbers, which studies this operator or generalisations of it.
Whereas in the classical case, heat equations are also extensively studied on manifolds, there is according to  \cite{CZ2018} no comparable theory of pseudodifferential operators over $p$-adic manifolds, not to say over manifolds defined over a non-archimedean local field. A construction of a certain pseudodifferential operator on a certain $p$-adic manifold was undertaken in \cite{GenDiffMg}, however the generalised diffusion obtained cannot be considered a heat equation. These operators are invariant under the action of certain finite groups. A study of how $p$-adic pseudodifferential operators transform under group actions on the Bruhat-Tits tree was undertaken in \cite{AK2010}.
A different approach was pursued in \cite{Zuniga2020}, where $p$-adic integral operators on closed open (clopen) subsets of $\mathds{Q}_p$ were constructed which are direct analogues of graph Laplacians. In fact, they can be seen as $p$-adic forms of graph Laplacians by the dictionary developed in \cite{pWaveGraph}. 

\smallskip
A `nice' theory of $p$-adic heat equations is one in which the heat operator is diagonalisable by the $p$-adic Fourier transform \cite{Zuniga2015}. More general  operators can be diagonalised using wavelets on ultrametric spaces \cite{XK2005}.
Kozyrev's well-known $p$-adic wavelets were found to diagonalise the Taibleson-Vladimirov operator \cite{Kozyrev2002}.
However, the graph-based heat operators are not entirely diagonalisable by Kozyrev wavelets, one also needs the eigenvectors of the graph Laplacian \cite{Zuniga2020}. The work of W.\ Z\'{u}\~{n}iga-Galindo contains many classes of $p$-adic heat equations, for which the Cauchy problem is solved in the affirmative. This includes those with graph-based operators, like also those in \cite{TZ2018}.

\smallskip
The non-archimedean counterpart of Riemann surfaces are the \emph{Mumford curves} which are projective algebraic curves 
defined over non-archimedean fields allowing a Schottky uniformisation \cite{Mumford1972}. Locally, they are holed discs inside the base field. That means that if the base field is a non-archimedean local field, then the Haar measure allows integration of functions defined on these local pieces. The question is, how to glue together local operators on overlaps in a meaningful way.
The stable reduction theorem \cite{DM1969} states that there is a model over a finite extension of the base field such that the special fibre, aka the reduction curve over the residue field, is a singular projective curve whose irreducible components are all rational curves, and the singularities are ordinary double points.
Consequently, the intersection graph of the reduction curve of a Mumford curve has first Betti number equal to the genus of the curve.
The rigid analytic theory of Mumford curves \cite{GvP1980} allows to construct the intersection graph with the help of a covering of the curve by holed discs. This fact, together with the rigid analytic proof of the stable reduction theorem from \cite{FP2004} gives insight into how to obtain an integral operator on Mumford curves. Namely, these are
 graph-based integral operators which generalise both the Taibleson-Vladimirov operator as well as Z\'{u}\~{n}iga-Galindo's operators on the set of $K$-rational points of a Mumford curve by taking disjoint covers refining a covering with holed discs, and where each such disjoint piece corresponds to a vertex in a reduction graph of the curve. This is the first contribution of the present article. 
 
 \smallskip
The integral operator constructed here has a kernel which is doubly invariant under the Schottky group by summing up twists of an operator defined on the fundamental domain of the action of the Schottky group.
 In order to obtain a meaningful space of $L_2$-functions on a 
 Mumford curve, a weighted integral is defined in order to take care of the derivative of each M\"obius transformation involved through the integral transformation rule. 
A decomposition of the space of of invariant $L_2$ functions on the universal covering space of a Mumford curve is obtained through first defining a very general class of wavelets which extends Kozyrev's wavelets to such which are supported on sets which need not be discs.
The $L_2$-functions on the fundamental domain (a holed disc) then decompose into eigenfunctions of our graph-based operators which consist of such wavelets and of functions coming from eigenvectors of a graph Laplacian.

\smallskip
The next step is then to construct from these wavelets such which are invariant under the Schottky group and then prove an analoguous structure Theorem for the invariant $L_2$-spaces.
After proving that the operator is self-adjoint and bounded, the next result is 
the statement that the Cauchy problem for the heat equation for this operator and the invariant operator has a solution which is unique.
The proof follows the methods from \cite{Zuniga2020}.
Finally, some statements about the wavelet eigenvalues of the heat operators on Mumford curves are proven. In particular, the spectral gap of the new operators
can be arbitrarily small in families of Mumford curves with isomorphic stable reduction graphs. An example is given, where the spectral gap is a wavelet eigenvalue, but
it is an open question, whether the spectral gap can also be a graph Laplacian eigenvalue.

\bigskip
This article is subdivided into six numbered sections, the present one being the introduction.
The following Section 2 fixes some notation. This is followed
in Section 3 by a brief introduction to the rigid analytic theory of Mumford curves
and affinoid domains as far as is needed for the remaining parts.
Section 4 generalises the dictionary developed in \cite{pWaveGraph}
to the more general situation of this article and now contains a correspondence between matrix eigenvalues and operator eigenvalues.
Section 5 develops the wavelets and integral  operators
on clopen pieces of the local field and proves the structure theorem for the $L_2$-spaces and the Cauchy problem for the heat equation.
Section 6 makes everything from the previous section invariant under the action of the Schottky group and thus obtains a positive answer to the Cauchy problem for the heat equation on Mumford curves.
It concludes with a study of wavelet eigenvalues for Mumford curves in general and Tate elliptic curves in particular.

\section{Notation}
Let $K$ be a non-archimedean local field whose absolute value is denoted as $\absolute{\cdot}_K$. 
The multiplicative group of $K$ is denoted as $K^\times$.
The unit ball of $K$ is denoted as $O_K$. The Haar measure $\mu$ on $K$ is chosen such that $\mu(O_K)=1$. 
It is known that $O_K$ is a discrete valuation ring whose uniformiser is denoted as $\pi$. It has the property
\[
\absolute{\pi}_K=p^{-\frac{1}{e}}
\]
for some $e\ge 1$, and where $p$ is a prime number. 
In the case that $K$ is a $p$-adic number field of degree $n$ over $\mathds{Q}_p$, then there is the well-known formula
\[
n=e\cdot f
\]
where $f$ is the degree of the residue field $k$ over 
the finite field $\mathds{F}_p$ with $p$ elements.
Notice that, with our choice of Haar measure,
\[
\mu(\pi^mO_K)=p^{-mf}
\]
for $m\in\mathds{Z}$.
Let 
\[
\tau\colon k\to O_K
\]
be a lift of the residue map which takes a residue class modulo $\pi O_K$ to a representative in $O_K$.
The Bruhat-Tits tree for the projective-linear group $\PGL_2(K)$ will be denoted as $\mathcal{T}_K$. Its vertices are in one-one correspondence with discs of the projective line $\mathds{P}^1(K)$, where a disc is a subset of the form
\[
\mathset{\absolute{x-a}_K\le r}\quad\text{or}\quad
\mathset{\absolute{x-a}_K\ge r}\cup\mathset{\infty}
\]
where $a\in K$ and $r\in\absolute{K^\times}$. We will fix an additive character 
\[
\chi\colon K\to\mathds{C}^\times
\]
When integrating a complex-valued function $f(x)$ on $K$, the Haar measure will be denoted as $dx$, so that the integral has the form
\[
\int_Kf(x)\,dx
\]
if it exists. Finally, we will extensively make use of indicator functions which we will write as
\[
\Omega(x\in U)=\begin{cases}
1,&x\in U\\
0,&\text{otherwises}
\end{cases}
\]
where $U$ is a measurable subset of $K$.

\section{Mumford curves and their reductions}

This section is an introduction into the theory of rigid analytic geometry and Mumford curves. Readers familiar with these topics can simply browse through this section in order to pick up necessary notation and viewpoints for the later parts of this article.

\subsection{Mumford curves}

Mumford curves were first constructed in \cite{Mumford1972} as a successful attempt to generalise Tate's analytic uniformisation of $p$-adic elliptic curves, which can be found in \cite{Roquette1970}.
Through this construction, Mumford revealed a one-to-one correspondence between conjugacy classes of Schottky groups in $\PGL_2(K)$ and a certain class of projective algebraic curves defined over $K$.

\smallskip
In order to present more details of this correspondence,  we will give a brief summary of \cite[Ch.\ I, III, IV, V]{GvP1980}, adapted to the setting of a non-archimedean local field.
The slightly over two pages in \cite[Ch.\ 5.4]{FP2004}
 also contain a brief overview over Mumford curves.

\smallskip
A M\"obius transformation $\gamma\in\PGL_2(K)$ is called
\emph{hyperbolic}, if
\[
\absolute{\Tr(A_\gamma)}_K^2>1
\]
where $A_\gamma\in\SL_2(K)$ is the representative of $\gamma$ as a special linear matrix.

\smallskip
A discrete subgroup of $\PGL_2(K)$ which is freely generated by $g\ge 1$ hyperbolic transformations is called a \emph{Schottky group}.

\smallskip
A \emph{limit point} of a subgroup $\Gamma$ of $\PGL_2(K)$ is a point $x\in\mathds{P}^1(K)$ such that there exists a point $x_0\in\mathds{P}^1$ and a sequence $\gamma_n\in \Gamma$ such that
\[
\lim\limits_{n\to\infty}\gamma_n(x_0)=x
\]
The set of limit points is denoted as $\mathscr{L}(\Gamma)$,
and $\Omega=\mathds{P}^1(K)\setminus\mathscr{L}(\Gamma)$ is the set of \emph{regular points} of $\Gamma$.
A Schottky group $\Gamma$ has the property 
\[
\Omega\neq\emptyset
\]
This is an open subset of $\mathds{P}^1(K)$.

\smallskip
The \emph{Mumford curve} associated with a Schottky group $\Gamma$
with $g\ge1$ generators
is the quotient space
\[
X=\Omega/\Gamma
\]
It is a projective algebraic curve over $K$ of genus $g$ \cite[Ch.\ III]{GvP1980}.

\smallskip
A \emph{rational affinoid domain} is a holed disc in $\mathds{P}^1$.
A Mumford curve has a finite covering $\mathcal{U}$
by rational affinoid domains \cite[Ch.\ V]{GvP1980}.
In this covering, two overlapping patches $U\in\mathcal{U}$ are glued with another along a boundary component (which is a circle).

\subsection{Affinoid domains in $K$ and their reductions}\label{sec:Red}

The \emph{standard Tate algebra} $T_n$ is defined as
\[
T_n=K\langle z_1,\dots,z_n\rangle
\]
is the subring of the ring $K[[z_1,\dots,z_n]]$ of all formal power series consisting of those power series
\[
f=\sum\limits_\alpha c_\alpha z_1^{\alpha_1}\cdots z_n^{\alpha_n}
\in K[[z_1,\dots,z_n]]
\]
such that $\lim c_\alpha=0$, where $\alpha=(\alpha_1,\dots,\alpha_n)\in\mathds{N}^n$.
The \emph{Gauss norm} on $T_n$ is defined as
\[
\norm{f}=\max\absolute{c_\alpha}_K
\]
One further has
\[
T_n^o=\mathset{f\in T_n\colon \norm{f}\le 1},
\quad T_n^{oo}=\mathset{f\in T_n\colon \norm{f}<1}
\]
and one has the canonical isomorphism
\[
\bar{T}_n=T_n^o/T_n^{oo}\cong k[z_1,\dots,z_n]
\]
There are spaces attached to these objects, namely
\[
X=\Sp(T_n)=\mathset{(x_1,\dots,x_n)\in (K^{alg})^n\colon \max\limits_{i=1}^n\absolute{x_i}_{K^{alg}}\le 1}
\]
and 
\[
\Max(\bar{T}_n)=\mathds{A}^n(k^{alg})
\]
there is a natural surjective map
\[
\Red_X\colon \Sp(T_n)\to\Max(\bar{T}_n)
\]
called \emph{reduction map}.

\smallskip
An \emph{affinoid algebra} $A$ over $K$ is a $K$-algebra for which there exists a $K$-algebra homomorphism
\[
T_n\to A
\]
such that $A$ is a finitely generated $T_n$-module.
By \cite[Thm.\ 3.2.1]{FP2004}, every affinoid algebra $A$ is of the form
\[
A=T_m/I
\]
for some ideal $I$ in the Tate algebra $T_m$, and the Gauss norm of $T_m$ induces a norm on $A$ for which it is a Banach algebra over $K$.
One defines the \emph{affinoid space}
\[
X=\Sp(A)=\mathset{\text{maximal ideals in $A$}}
\]
In general, if $m\in\Sp(A)$, then the field
$A/m$ is a finite extension of $K$.

\smallskip
We are interested in spaces of the form
\[
X(K)=\mathset{m\in X\colon A/m\cong K}
\]
of \emph{$K$-rational points} of $X=\Sp(A)$.

\begin{exa}
A disc 
\[
B_r(a)=\mathset{x\in K\colon \absolute{x-a}_K\le \absolute{r}_K} 
\]
with $r\in K^\times$ can be written as
\[
X(K)
\]
with $X=\Sp(A)$, and
where $A$ is the affinoid algebra
\[
A=K\left\langle \frac{x-a}{r}\right\rangle
\]
\end{exa}

\begin{exa}
The annulus
\[
Y=\mathset{x\in K^{alg}\colon \absolute{\pi}_K\le\absolute{x}_K\le1}
=\Sp B
\]
with
\[
B=K\left\langle \frac{\pi}{x},x\right\rangle
=K\langle S,T\rangle/(ST-\pi)
\]
Observe that
\[
\overline{B}=k[S,T]/(ST)=k[X,X^{-1}]
\]
Hence, the reduction of $Y$ is the union of two affine lines intersecting in one ordinary double point $P_d$.
Let us verify that
\[
\Red^{-1}(P_d)=\mathset{x\in K^{alg}\colon\absolute{\pi}_K<\absolute{x}_K<1}=:Y^-
\]
Namely, the fibre of $P_d$ can be written as
\[
S\equiv T\equiv 0\mod \pi \quad\Leftrightarrow\quad
\absolute{S}_K<1,\;\absolute{T}_K<1
\]
And because $T=\frac{\pi}{S}$, we have
\[
\absolute{T}_K=\frac{\absolute{\pi}_K}{\absolute{S}_K}<1
\quad\Leftrightarrow\quad\absolute{S}_K>\absolute{\pi}_K
\]
Together, this yields $\absolute{\pi}_K<\absolute{S}_K<1$, as desired.

\smallskip
Notice that 
\[
Y^-(K)=\emptyset
\]
i.e.\ the $K$-rational annulus $Y(K)$ is covered by the union of two disjoint circles.
This will be important later in this article.
\end{exa}

Let $X=B_{r_0}(a_0)\setminus\bigcup\limits_{i=1}^n B_{r_i}(a_i)$ be a holed disc with $r_i\in\absolute{K^\times}_K$,
$a_i\in B_r(a_0)$, and the holes $B_{r_i}(a_i)$ pairwise disjoint, $i=1,\dots,n$. 
The reduction tree of $X$ can be obtained in the following way as a subtree of the Bruhat-Tits tree $\mathcal{T}_K$ as follows:
Let $b_0,b_1,\dots,b_n\in K$ such that
\[
\absolute{a_i-b_i}=r_i,\quad i=0,\dots,n
\]
and let $S=\mathset{a_0,b_0,\dots,a_n,b_n,\infty}$ and
$T^*(S)$ the smallest subtree of $\mathcal{T}_K$ whose ends are the set $S$. Then the inner part $T(X)$, i.e.\ the subtree whose vertices are
\[
\mathset{v(a,b,c)\colon \mathset{a,b,c}\subset S,\;\absolute{\mathset{a,b,c}}=3}
\]
is the reduction tree of $X$. Here, $v(a,b,c)$ denotes the unique vertex of $\mathcal{T}_K$ determined by the three ends $a,b,c\in\mathds{P}^1(K)$.

\smallskip
Notice further that an edge in $T(S)$ corresponds to a double point in the reduction curve $\overline{X}$, respectively to its fibre under the reduction map, i.e.\ an open annulus in $K^{alg}$.

\smallskip
If one instead considers the edges of $T(S)$ to be geodesic paths in the Bruhat-Tits tree $\mathcal{T}_K$ and defines $T_X$ as the smallest  subtree having only edges from $\mathcal{T}_K$ defined by these geodetic paths, then one has the 
 following remark which is well-known in the theory of rigid analysis:
\begin{rem}\label{treepatch}
A vertex $v$ in $T(X)$ corresponds to a holed disc $U_v$ in $K$ obtained by removing from a disc one maximal subdisc per edge in $T(X)$ attached to $v$.
An edge $v$ in $T(X)$ corresponds to an annulus in $X$ without any $K$-rational points.
\end{rem}

\begin{rem}\label{verticialCovering}
A consequence of Remark \ref{treepatch} is that the collection 
of the $U_v$ where $v$ runs through the vertices of $T(X)$ yields 
a disjoint covering of the set $X(K)$ of $K$-rational points of $X$.
We call such a covering of $X(K)$ \emph{verticial}. It is uniquely determined by the tree $T(X)$.
\end{rem}

\begin{exa}
Let $X=\mathset{\absolute{\pi}_K\le\absolute{x}_K\le 1}$. Then
we can take $S=\mathset{0,\pi,1,\infty}$, i.e.\
\[
a_0=0,\;b_0=\pi,\;a_1=0,\;b_1=1
\]
and obtain the following tree $T^*(S)$:
\[
\xymatrix@=10pt{
&&\infty&\\
&&\textcolor{red}{\bullet}\ar@{-}[u]\ar@{-}@[red][dl]\ar@{-}[ddrr]&&\\
&\textcolor{red}{\bullet}\ar@{-}[dl]\ar@{-}[dr]&&\\
0&&\pi&&1
}
\]
and the reduction tree $T(S)$  of $X$ is the red segment and equals $T_X$.
The verticial covering of $X(K)$ consists of two disjoint holed discs corresponding to the two red vertices. They are given as follows:
\[
U=\mathset{\absolute{x}_K=1},\quad V=\mathset{\absolute{x}_K=\absolute{\pi}_K}
\]
Observe that the open annulus
\[
\mathset{\absolute{\pi}_K<\absolute{x}_K<1}
\]
corresponding to the red edge does not have any $K$-rational points.
\end{exa}

\subsection{Mumford curves as rigid analytic spaces}

Mumford curves are obtained by glueing finitely many holed discs in such a way that one obtains a rational affinoid covering $\mathcal{U}$
which is \emph{pure} in the sense that for all $U,V\in\mathcal{U}$ it holds true that

\begin{enumerate}
\item If $U\cap V\neq\emptyset$, then there exists an affine subset $Z\subset\overline{U}$ such that
\[
U\cap V=\Red_U^{-1}(Z)
\]
\item If $U\cap V\neq\emptyset$, then the induced map between reductions
\[
\overline{U\cap V}\to \overline{U}
\]
is injective.
\end{enumerate}

\begin{exa}
This is taken from \cite[Ex.\ 4.8.5]{FP2004}.
Let $U_0=\mathset{\absolute{x}_K\le\absolute{\rho_0}_K}$ and
$U_1=\mathset{\absolute{\rho_2}_K\le\absolute{x}_K\le 1}$
with $\rho_0,\rho_1\in K^\times$ such that
$\absolute{\rho_1}_K\le\absolute{\rho_0}_K\le 1$.
The covering $\mathcal{U}=\mathset{U_0,U_1}$ of the unit disc is pure if and only if $\absolute{\rho_0}_K=\absolute{\rho_1}_K$.
\end{exa}

Given a pure affinoid covering $\mathcal{U}$
of a Mumford curve $X$, one obtaines the reduction of $X$ with respect to $\mathcal{U}$ by glueing the reductions of the patches $U\in\mathcal{U}$ along the reductions of their intersections,
and obtains a singular projective curve $\overline{X_{\mathcal{U}}}$ over $k$ whose irreducible components are all rational curves, and whose singular points are ordinary double points.

\begin{dfn}
The reduction graph  of a Mumford curve $X$ with respect to a pure affinoid covering $\mathcal{U}$ is the intersection graph of $Y=\overline{X_{\mathcal{U}}}$, i.e.\ the vertices correspond to the irreducible components of $Y$, and the edges to the double points.
\end{dfn}

It is a well-known fact that the reduction graph of
a Mumford curve of genus $g\ge0$ has first Betti number equal to $g$.
Its universal covering tree can be embedded as a subtree into
the Bruhat-Tits tree of $K$ by
 taking a maximal subtree $T$ of $G_{\mathcal{U}}$ and embedding it into $\mathcal{T}_K$. By replacing in $T$ all edges with geodetic paths as in Section \ref{sec:Red}, one obtains a verticial covering of a fundamental domain $\mathscr{F}$ for the action of
 the fundamental group $\Gamma$ of the graph, and by taking $\Gamma$-orbits, a verticial covering of $X(K)$.
 The corresponding quotient graph will be denoted as 
 $G_{\mathcal{U}}$ where this time $\mathcal{U}$ is the unique verticial covering of $X(K)$.
 
 \smallskip
 It is a well-known fact that the group $\Gamma$ is a free group of rank $g$, and that the construction above yields a faithful representation 
 \[
 \Gamma\to\PGL_2(K)
 \]
 as a discrete subgroup generated by $g$ hyperbolic (aka loxodromic) M\"obius transformations. More will be said in the Section about heat equations on Mumford curves.

\section{A $\pi$-adic dictionary}

The aim of this section is to generalise the dictionary of
\cite[\S 2]{pWaveGraph} to the setting of this article.

\smallskip
Let $Z\subset K$ be a compact measurable set, and $\mathcal{U}$ a
finite covering of $Z$ by disjoint sets of finite measure.
The space of continuous complex-valued functions on $Z$ will be denoted as $C(Z,\mathds{C}$ or simply as $C(Z)$. The space of bounded linear operators on a Banach space $F$ is denoted as $\mathcal{B}(F)$. 

\smallskip
Let $n=\absolute{\mathcal{U}}$, and let $n\times n$-matrices be indexed by $\mathcal{U}$. We call these $\mathcal{U}$-matrices.
If $A=(A_{UV})$ is a $\mathcal{U}$-matrix, then
we define
\[
\norm{A}_{\mathcal{U}}
=\sqrt{\sum\limits_{U,V\in\mathcal{U}} \absolute{A_{UV}}^2\mu(U)\mu(V)}
\]
this defines a norm on the algebra of $\mathcal{U}$-matrices, called $\mathcal{U}$-norm.

\begin{lem}[Dictionary]\label{dict}
There is an injective isometric homomorphism between algebras
\begin{align*}
b\colon\mathds{C}^{n\times n}&\to\mathcal{B}(C(Z,\mathds{C}))
\\
A=(A_{UV})&\mapsto A(x,y)=\sum\limits_{U,V\in\mathcal{U}}A_{UV}\Omega(x\in U)\Omega(y\in V)
\end{align*}
where the first space has the $\mathcal{U}$-norm, and the second space the norm
\[
\norm{\mathcal{A}}:=\sqrt{\int_Z\int_Z\absolute{A(x,y)}^2\,dy\,dx}
\]
where $\mathcal{A}\in\mathcal{B}(C(Z))$ has kernel $A(x,y)$.
\end{lem}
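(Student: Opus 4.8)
The plan is to verify in turn the three asserted properties — that $b$ is an algebra homomorphism, that it is injective, and that it is an isometry — with the disjointness of $\mathcal{U}$ doing most of the work. Linearity of $b$ is immediate from the defining formula, so the content lies in multiplicativity, the norm identity, and injectivity. Throughout I would use that the operator $\mathcal{A}=b(A)$ acts by
\[
(\mathcal{A}f)(x)=\int_Z A(x,y)f(y)\,dy=\sum_{U\in\mathcal{U}}\Omega(x\in U)\sum_{V\in\mathcal{U}} A_{UV}\int_V f(y)\,dy
\]
and that, since the members of $\mathcal{U}$ are pairwise disjoint and cover $Z$, the indicators $\Omega(\cdot\in U)$ behave as orthogonal idempotents, with $\int_Z\Omega(y\in V)\Omega(y\in W)\,dy=\mu(V)\,\delta_{VW}$.

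I would dispatch the isometry first, as it is the cleanest. Expanding
\[
\norm{\mathcal{A}}^2=\int_Z\int_Z\absolute{A(x,y)}^2\,dy\,dx
\]
and multiplying out the square, every cross term $\Omega(x\in U)\Omega(x\in U')$ with $U\neq U'$ (and likewise in the $y$-variable) vanishes by disjointness, leaving only the diagonal contributions $\absolute{A_{UV}}^2\int_U dx\int_V dy=\absolute{A_{UV}}^2\mu(U)\mu(V)$. Summing gives exactly $\norm{A}_{\mathcal{U}}^2$, so $b$ preserves norms. Injectivity is then a formal consequence: since each $U\in\mathcal{U}$ has positive measure, $\norm{\cdot}_{\mathcal{U}}$ is a genuine norm, and $b(A)=0$ forces $\norm{A}_{\mathcal{U}}=0$, hence $A=0$.

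The real content is multiplicativity, $b(AB)=b(A)\,b(B)$, which I would check at the level of kernels. The operator $b(A)\,b(B)$ is again an integral operator, whose kernel is the composition $\int_Z A(x,y)B(y,z)\,dy$. Substituting the two expansions and integrating in $y$, the disjointness identity collapses the inner sum to the diagonal $V=W$, producing
\[
\int_Z A(x,y)B(y,z)\,dy=\sum_{U,X}\Big(\sum_V A_{UV}\,\mu(V)\,B_{VX}\Big)\Omega(x\in U)\Omega(z\in X)
\]
which I would then match against the kernel of $b(AB)=\sum_{U,X}(AB)_{UX}\Omega(x\in U)\Omega(z\in X)$.

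The main obstacle is precisely this matching: the composition integral carries the weight $\mu(V)$ in the summation over the intermediate index, so reconciling it with the ordinary matrix product $(AB)_{UX}=\sum_V A_{UV}B_{VX}$ is the delicate point, and it forces one to be explicit about which product the source algebra $\mathds{C}^{n\times n}$ is equipped with. I expect the careful bookkeeping of these measure factors — and, if needed, the renormalisation of the correspondence that absorbs the $\mu(V)$ — to be where the argument must be handled with care; once the product is fixed so that the weighted sum above is the intended one, equality of the two kernels is immediate and multiplicativity follows, completing the proof together with the isometry and injectivity established above.
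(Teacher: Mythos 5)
Your treatment of isometry and injectivity is correct and is exactly the paper's argument: disjointness of the members of $\mathcal{U}$ kills all cross terms, so $\norm{b(A)}^2=\sum_{U,V}\absolute{A_{UV}}^2\mu(U)\mu(V)=\norm{A}_{\mathcal{U}}^2$, and injectivity is then formal, provided one notes (as you do) that each $U\in\mathcal{U}$ has positive measure. The only item the paper checks that you skip is that $b(A)$ actually lies in $\mathcal{B}(C(Z))$; in your write-up this is implicit, since your expansion exhibits $b(A)$ as a finite sum of rank-one operators $f\mapsto\Omega(\cdot\in U)\int_V f(y)\,dy$, but a one-line remark to that effect should be added.

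On multiplicativity you have found a genuine problem --- not in your own computation, which is correct, but in the lemma as stated. Writing $D=\mathrm{diag}\left(\mu(V)\right)_{V\in\mathcal{U}}$, your kernel composition says precisely
\[
b(A)\,b(B)=b(ADB),
\]
so $b$ is multiplicative for the ordinary matrix product only when $\mu(V)=1$ for every $V\in\mathcal{U}$, which is not assumed (the covering consists merely of disjoint sets of finite measure). The paper's own proof never carries out this computation: it declares multiplicativity a ``straightforward calculation'' and defers to \cite[Prop.\ 2.2]{pWaveGraph}, so the measure factors you identified are simply not addressed there. Your proposed repair is the right one: either equip the source $\mathds{C}^{n\times n}$ with the weighted (associative) product $A\star B:=ADB$, in which case $b$ becomes an isometric injective algebra homomorphism exactly as claimed, or renormalise the kernel map to absorb $D$, at the cost of altering the stated isometry. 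It is worth noting that nothing later in the paper uses matrix-times-matrix multiplicativity: what is used downstream is Lemma \ref{nice}, $b(A)b(e)=b(Ae)$, and Corollary \ref{eigenCorrespondence}, and these are correct as stated precisely because the vector map carries the normalisation $\mu(V)^{-1}$ that absorbs the very factor you isolated. So your proof is incomplete only to the extent that the statement itself must be amended; your diagnosis of where and why is sharper than the paper's treatment.
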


\begin{proof}
First observe that $b(A)$ is indeed a bounded linear operator:
\begin{align*}
\norm{b(A)u}_2^2&=
\int_Z\absolute{\int_ZA(x,y)u(y)\,dy}^2\,dx
\\
&\le\sum\limits_{U,V\in\mathcal{U}}\absolute{A_{UV}}^2
\int_Z\Omega(x\in U)\int_V\absolute{u(y)}^2\,dy\,dx
\\
&\le\sum\limits_{U,V\in\mathcal{U}}\absolute{A_{UV}}^2\mu(U)\mu(V)\norm{u}_2^2
\\
&=\norm{A}_{\mathcal{U}}^2\norm{u}_2^2
\end{align*}

Now, linearity and multiplicativity of $b$ are straightforward calculations. In the case of mutliplication, one may consult the corresponding part in the proof of \cite[Prop.\ 2.2]{pWaveGraph}.

\smallskip
Isometry follows from the observation that
\[
\norm{A}_{\mathcal{U}}^2=\int_Z\int_Z\absolute{A(x,y)}^2\,dy\,dx
=\norm{\mathcal{A}}^2
\]
Thus the assertion is proven.
\end{proof}

A $\mathcal{U}$-vector is an $n$-tupel with entries in $\mathcal{C}$ indexed
by $\mathcal{U}$. If $e=(e_V)$ is an $\mathcal{U}$-vector, then
there is an associated function $e(x)\in C(Z)$ defined as:
\[
e(x)=\sum\limits_{V\in\mathcal{U}}\mu(V)^{-1}e_V\,\Omega(x\in V)
\]
There is a map
\[
b\colon \mathds{C}^n\to C(Z),\;
e=(e_v)\mapsto e(x)
\]
We are convinced that the same letter $b$ as in Lemma \ref{dict}
will not cause confusion.

\smallskip
Now, there is a product defining an action of 
$\mathcal{B}(C(Z))$ on $C(Z)$. Namely, 
\[
u\mapsto \mathcal{A}u(x)=\int_ZA(x,y)u(y)\,dy
\]
We will write this in the usual way as $\mathcal{A}u$.

\begin{lem}\label{nice}
Let $A=(A_{UV})$ be a $\mathcal{U}$-matrix, and $e=(e_V)$ a $\mathcal{U}$-vector. Then
\[
b(A)b(e)=b(Ae)
\]
\end{lem}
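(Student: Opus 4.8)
The plan is to verify the identity pointwise by unwinding every definition and exploiting the disjointness of the covering $\mathcal{U}$. Both sides are functions in $C(Z)$, so it suffices to show they agree at an arbitrary $x\in Z$. On the left, $b(A)b(e)$ is the image of the function $b(e)=e(\cdot)$ under the integral operator with kernel $A(x,y)$, so by the definition of the action one has $(b(A)b(e))(x)=\int_Z A(x,y)\,e(y)\,dy$. I would substitute the explicit expansions $A(x,y)=\sum_{U,V}A_{UV}\Omega(x\in U)\Omega(y\in V)$ and $e(y)=\sum_{W}\mu(W)^{-1}e_W\,\Omega(y\in W)$ and pull the (finite) sums outside the integral, which is legitimate since $\mathcal{U}$ is finite.

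The key simplification comes from the fact that $\mathcal{U}$ is a \emph{disjoint} covering: for any two members $V,W\in\mathcal{U}$ the pointwise product of indicators satisfies $\Omega(y\in V)\Omega(y\in W)=\Omega(y\in V)$ when $V=W$ and vanishes otherwise, so that $\int_Z\Omega(y\in V)\Omega(y\in W)\,dy=\mu(V)$ for $V=W$ and $0$ otherwise. Applying this collapses the triple sum over $U,V,W$ to a double sum over $U,V$, and each surviving term carries a factor $\mu(V)$ coming from the integral of the indicator.

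The crux of the argument, and the only place where care is genuinely needed, is the bookkeeping of the measure weights. The factor $\mu(V)$ produced by $\int_Z\Omega(y\in V)\,dy$ should cancel against the normalising factor $\mu(V)^{-1}$ built into the definition of the associated function $e(\cdot)$. After this cancellation the expression reduces to $\sum_U\big(\sum_V A_{UV}e_V\big)\,\Omega(x\in U)$, i.e.\ $\sum_U (Ae)_U\,\Omega(x\in U)$ up to the normalisation attached to the map $b$. I would then recognise this as precisely the function $b(Ae)$ obtained by applying the vector-to-function map to the $\mathcal{U}$-vector $Ae$ with entries $(Ae)_U=\sum_V A_{UV}e_V$, which finishes the proof.

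I do not anticipate any deep obstacle: linearity and the finiteness of $\mathcal{U}$ make all interchanges of sum and integral trivial, and the entire content is the measure cancellation described above together with the disjointness of $\mathcal{U}$. The one point worth double-checking is the consistency of the measure normalisations across the two instances of $b$ (on the input vector $e$ and on the output vector $Ae$), since that is exactly what guarantees the stray weights telescope away cleanly rather than leaving a residual factor $\mu(U)^{\pm 1}$ on each block.
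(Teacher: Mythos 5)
Your proposal is correct and follows essentially the same route as the paper's proof: expand the kernel $A(x,y)$ and the function $e(y)$, pull the finite sums out of the integral, and use disjointness of $\mathcal{U}$ so that $\int_Z\Omega(y\in V)\Omega(y\in W)\,dy=\mu(V)\delta_{VW}$, whereupon the weight $\mu(V)$ cancels the normalising factor $\mu(V)^{-1}$ carried by $e(y)$ and one lands on $\bigl(b(A)b(e)\bigr)(x)=\sum_{U}\bigl(\sum_V A_{UV}e_V\bigr)\Omega(x\in U)$.

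The one point you flagged for double-checking is genuinely delicate, and your suspicion is warranted: the cancellation is one-sided. The integral absorbs the $\mu(V)^{-1}$ on the \emph{input} vector, but nothing produces a compensating $\mu(U)^{-1}$ on the \emph{output} blocks. Under the literal definition of the vector-to-function map, $b(Ae)(x)=\sum_U\mu(U)^{-1}(Ae)_U\,\Omega(x\in U)$, which differs from the computed $\sum_U(Ae)_U\,\Omega(x\in U)$ by exactly the residual per-block factor you worried about, unless $\mu(U)=1$ for all $U$. The paper's own proof makes the same elision, writing $c(x)=\sum_U\sum_V A_{UV}e_V\,\Omega(x\in U)$ for $b(Ae)$ without the weight; so the identity as stated is exact only if the map $b$ on vectors is taken \emph{without} the normalising factor (with the weights kept, one gets $b(A)b(e)=b(MAe)$ for $M=\mathrm{diag}(\mu(U))$, and in Corollary \ref{eigenCorrespondence} eigenvectors of $A$ would have to be replaced by eigenvectors of $MA$). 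In short: your computation matches the paper's and is sound; the unresolved normalisation you pointed out is an inconsistency in the paper's conventions rather than a gap in your argument.
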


\begin{proof}
We have 
\[
Ae=c=(c_U)
\]
with 
\[
c_U=\sum\limits_{V\in\mathcal{U}}A_{UV}e_V
\]
Hence, $b(c)$ is the function
\[
c(x)=\sum\limits_{U\in\mathcal{U}}
\sum\limits_{V\in\mathcal{U}}A_{UV}e_V\Omega(x\in U)
\]
On the other hand, $b(A)b(e)$ is the function
\begin{align*}
\int_ZA(x,y)e(y)\,dy&
=\sum\limits_{U,V\in\mathcal{U}}A_{UV}\mu(V)^{-1}e_V\int_V\Omega(y\in V)\,dy\,\Omega(x\in U)
\\
&=\sum\limits_{U,V\in\mathcal{U}}A_{UV}e_V\Omega(x\in U)
=b(Ae)
\end{align*}
This proves the assertion.
\end{proof}

\begin{cor}\label{eigenCorrespondence}
If $e=(e_V)$ is an eigenvector of $\mathcal{U}$-matrix $A$ with eigenvalue $\lambda\in\mathds{C}$, then
$b(e)$ is an eigenfunction of $b(A)$ for the same eigenvalue.
\end{cor}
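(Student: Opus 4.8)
The plan is to read this off Lemma~\ref{nice}, which already packages the compatibility between the matrix and vector incarnations of $b$ and the action of $\mathcal{B}(C(Z))$ on $C(Z)$. Before the one-line computation, I would record the two elementary properties of the vector map $b\colon\mathds{C}^n\to C(Z)$ on which everything rests. First, $b$ is $\mathds{C}$-linear: this is immediate from the defining formula
\[
b(e)(x)=\sum\limits_{V\in\mathcal{U}}\mu(V)^{-1}e_V\,\Omega(x\in V),
\]
which is visibly linear in the tuple $(e_V)$. Second, $b$ is injective: since $\mathcal{U}$ is by hypothesis a finite covering of $Z$ by \emph{disjoint} sets of positive (finite) measure, the indicator functions $\Omega(x\in V)$, $V\in\mathcal{U}$, are linearly independent in $C(Z)$, so $b(e)=0$ forces $\mu(V)^{-1}e_V=0$, hence $e_V=0$, for every $V$.

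With these in hand the argument is short. Since $e$ is an eigenvector of $A$ with eigenvalue $\lambda$, we have $Ae=\lambda e$ in $\mathds{C}^n$. Applying Lemma~\ref{nice} and then the linearity of the vector map $b$ gives
\[
b(A)\,b(e)=b(Ae)=b(\lambda e)=\lambda\,b(e),
\]
so $b(e)$ is carried to its $\lambda$-multiple by the operator $b(A)$, exactly the defining relation of an eigenfunction.

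The only remaining point is to confirm that $b(e)$ is a \emph{genuine} eigenfunction, i.e.\ that it is not the zero function, and this is precisely where injectivity of the vector $b$ is used: as an eigenvector $e\neq0$, so $b(e)\neq0$. I do not anticipate a real obstacle here; the one step deserving a moment's care is the injectivity of $b$, which in turn depends on the standing assumption of this section that the covering $\mathcal{U}$ is by disjoint sets of positive finite measure (so that $\mu(V)^{-1}$ makes sense and the indicator functions are independent). Everything else is formal manipulation justified by Lemma~\ref{nice}.
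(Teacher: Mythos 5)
Your proof is correct and follows essentially the same route as the paper's: apply Lemma~\ref{nice} to get $b(A)b(e)=b(Ae)$ and then use linearity of $b$ to conclude $b(Ae)=\lambda b(e)$. Your additional verification that $b(e)\neq 0$ (via injectivity of the vector map $b$, using disjointness of $\mathcal{U}$) is a sound extra precaution that the paper leaves implicit, but it does not change the argument.
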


\begin{proof}
We have
\[
b(A)b(e)=b(Ae)=b(\lambda e)=\lambda b(e)
\]
where the first equality is due to Lemma \ref{nice}.
\end{proof}

\section{Heat equations associated with affinoid coverings}

In the first subsection, we generalise the Kozyrev wavelets \cite{Kozyrev2002}
not only to $K$, but to have supports which need not be balls.
These turn out to be eigenfunctions of integral operators
constructed with the help of coverings of a compact clopen,
and symmetric graph adjacency matrices.
In the second subsection, we show that these integral  operators generate Feller semigroups, and
solve the Cauchy problem for the corresponding heat equation
on compact clopen sets.

\subsection{$K$-Wavelets}\label{sec:wavelets}

Let $Z$ be a measurable subset of $K$, and let
$\Cov(Z)$ be a Grothendieck topology on $X$
consisting of  coverings of $Z$ by mutually disjoint subsets.
The elements of $\Cov(Z)$ are called admissible 
coverings of $Z$. $\Cov(Z)$ is a partially order set with refinement $\le$ as the partial order, where we denote
\[
\mathcal{V}\le\mathcal{U}
\]
to mean that $\mathcal{V}$ is a refinement of $\mathcal{U}$.
A finite chain in $\Cov(Z)$ is a sequence
\[
\mathcal{U}_n<\dots<\mathcal{U}_i<\mathcal{U}_{i-1}<\dots<\mathcal{U}_0
\]
of coverings of $Z$. An infinite chain in $\Cov(Z)$ may or may not be bounded from above or below.
A chain is \emph{maximal}, if it cannot be made longer by inserting or appending new coverings to it.

\smallskip
We say that a refinement $\mathcal{V}<\mathcal{U}$ is \emph{strong}, if
\[
\mathcal{U}\cap\mathcal{V}=\emptyset
\]
A \emph{strong maximal chain} is a chain in which each refinement is strong, and which cannot be strongly refined. 

\smallskip
In the following, we fix a lift
\[
\tau\colon k\to O_K
\]
of the residue field $k$ into a set of representatives modulo the uniformiser $\pi$.

\smallskip
In the following, we assume that 
$Z$ is compact open, and 
our Grothendieck topology consists of coverings of $Z$ by rational affinoid sets (i.e.\ holed discs).

\begin{dfn}
Let $\mathcal{C}$ be a strong maximal chain of coverings in $\Cov(Z)$.
A \emph{$\mathcal{C}$-wavelet} on $Z$ is a family of functions
\[
\psi_{j,U}^{\mathcal{C}}=\mu(U)^{\frac12}\chi_K(p^{\frac{d-1}{e}}\tau(j)x)
\Omega(x\in U)
\]
with $U\in\mathcal{U}\in\mathcal{C}$,  $j\in k$,
and $p^{-\frac{d}{e}}$ the radius of the smallest hole in $U$.
\end{dfn}

\begin{exa}
Let $Z=\mathds{Q}_p$, and $\mathcal{C}$ the strong maximal chain of coverings in which each covering is given by the translates of
a fixed ball in $\mathds{Z}_p$. Then the $\mathcal{C}$-wavelets on $\mathds{Q}_p$ are the usual Kozyrev wavelets, if the $p$-adic lift $\tau$ from $\mathds{F}_p$ takes each element to its smallest natural counterpart.
\end{exa}

\begin{lem}\label{waveletInt1}
Let $\psi_{j,U}^{\mathcal{C}}$ be a $\mathcal{C}$-wavelet.
Then
\[
\int_Z\psi_{j,U}^{\mathcal{C}}(x)\,dx=0
\]
\end{lem}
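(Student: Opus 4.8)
The plan is to reduce the statement to the vanishing of a single additive-character integral and then to exploit a translation symmetry of the support $U$. Since $\psi_{j,U}^{\mathcal{C}}$ is supported on $U$ and equals $\mu(U)^{\frac12}\chi_K(cx)$ there, where I abbreviate the frequency as $c=p^{\frac{d-1}{e}}\tau(j)$, I would first record
\[
\int_Z\psi_{j,U}^{\mathcal{C}}(x)\,dx=\mu(U)^{\frac12}\int_U\chi_K(cx)\,dx,
\]
so that everything comes down to showing $\int_U\chi_K(cx)\,dx=0$. The relevant case is $\tau(j)\neq0$: for $j=0$ the character is trivial and $\psi_{j,U}^{\mathcal{C}}$ is a constant multiple of $\Omega(x\in U)$, which is not a genuine wavelet. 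Thus $c$ is a nonzero frequency with $\absolute{c}_K=p^{\frac{d-1}{e}}$, since $\tau(j)$ is a unit.

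Next I would exhibit a group of translations fixing $U$. Using the holed-disc description from Remark \ref{treepatch}, write $U=B_0\setminus\bigcup_i B_i$ with outer disc $B_0$ and pairwise disjoint holes $B_i$, the smallest of which has radius $\rho_{\min}=p^{-\frac{d}{e}}$. For any $h$ with $\absolute{h}_K\le\rho_{\min}$, translation by $h$ fixes $B_0$ and each $B_i$ setwise, since a disc is invariant under translation by any element no larger than its radius; hence $U+h=U$. In other words $U$ is invariant under the additive subgroup $H=\mathset{h\in K\colon\absolute{h}_K\le\rho_{\min}}$.

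Using the translation invariance of the Haar measure together with $U+h=U$, the substitution $x\mapsto x+h$ then yields, for every $h\in H$,
\[
\int_U\chi_K(cx)\,dx=\int_U\chi_K\bigl(c(x+h)\bigr)\,dx=\chi_K(ch)\int_U\chi_K(cx)\,dx.
\]
Consequently $\bigl(1-\chi_K(ch)\bigr)\int_U\chi_K(cx)\,dx=0$ for all $h\in H$, so it suffices to produce a single $h\in H$ with $\chi_K(ch)\neq1$; the integral, and hence $\int_Z\psi_{j,U}^{\mathcal{C}}\,dx$, then vanishes.

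The crux — and the only step I expect to require genuine care — is this last nontriviality, namely that $x\mapsto\chi_K(cx)$ restricts to a nontrivial character of $H$, i.e.\ that $cH\not\subseteq\ker\chi_K$. This is a bookkeeping matter comparing $\absolute{c}_K=p^{\frac{d-1}{e}}$, the radius $\rho_{\min}=p^{-\frac{d}{e}}$ of the smallest hole, and the conductor of $\chi_K$: since $\absolute{ch}_K\le p^{\frac{d-1}{e}}\cdot p^{-\frac{1}{e}\cdot d}=p^{-\frac1e}$ for $h\in H$, one computes $cH=\pi O_K$, so the condition reduces to $\chi_K$ being nontrivial on $\pi O_K$, which must be read off from the normalisation of $\chi_K$ fixed in Section 2. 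The exponent $\frac{d-1}{e}$ in the definition of the wavelet is calibrated precisely to make $cH$ land on this fixed scale independently of $d$. Should the translation argument feel delicate at this boundary radius, an equivalent route is to partition $U$ into discs of radius $\rho_{\min}$ and to note that $\int_B\chi_K(cx)\,dx=0$ on each such disc $B$ by the same nontriviality, the contributions summing to zero.
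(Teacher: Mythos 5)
Your strategy is the right one in spirit: the paper's own proof is a one-line appeal to the standard fact that $\int_U\chi_K(ax)\,dx=0$ for $\absolute{a}_K$ sufficiently large, and your translation argument (the stabiliser subgroup $H$ of the holed disc, the character-shift identity, and the reduction to nontriviality of $\chi_K$ on $cH$) is precisely the canonical proof of that fact, written out in full. The reduction to $\int_U\chi_K(cx)\,dx$, the exclusion of the degenerate case $j=0$, the invariance $U+h=U$ for $\absolute{h}_K\le\rho_{\min}$, and the bookkeeping $cH=\pi O_K$ are all correct.

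However, the step you defer to ``the normalisation of $\chi_K$ fixed in Section 2'' is a genuine gap, and it cannot be closed from the paper: Section 2 fixes no normalisation of $\chi_K$ whatsoever, not even nontriviality. Worse, with the standard normalisation for a local field --- $\chi_K$ trivial on $O_K$ and nontrivial on $\pi^{-1}O_K$ --- your required condition fails: $\chi_K$ is then trivial on $cH=\pi O_K$, and the lemma is false as literally stated. Concretely, take $U=\mathset{\absolute{x}_K=1}=O_K\setminus\pi O_K$, so that $d=1$ and $\absolute{c}_K=1$; then $\chi_K(cx)=1$ on all of $U$ and $\int_U\chi_K(cx)\,dx=\mu(U)=1-p^{-f}\neq0$. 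Note that the paper's Example \ref{exaNonComp} is calibrated differently: there the frequency $p^{-2n-1}$ on $W_n=p^{2n}\mathds{Z}_p$ yields $cH=\pi^{-1}O_K$ over the stabiliser $H=W_n$ of the support, exactly the threshold scale at which the standard character becomes nontrivial. So either $\chi_K$ must be assumed nontrivial on $\pi O_K$ (conductor contained in $\pi^2O_K$, a nonstandard choice), or the exponent $\frac{d-1}{e}$ in the definition of the wavelet should be $\frac{d+1}{e}$, making $\absolute{c}_K\,\rho_{\min}=p^{\frac1e}$. Your proof is thus conditionally complete and in fact sharper than the paper's, whose ``for $\absolute{a}_K$ sufficiently large'' never verifies that $p^{\frac{d-1}{e}}$ is large enough relative to $U$ and the conductor; but as written, your final nontriviality is an unproved assumption, and it is the single point on which the whole lemma turns.
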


\begin{proof}
This follows from the fact that
\[
\int_U\chi_K(ax)\,dx=0
\]
for $\absolute{a}_K$ sufficiently large.
\end{proof}

\begin{lem}\label{waveletInt2}
Let $\psi_{j,W}^\mathcal{C}$ be a $\mathcal{C}$-wavelet.
Then
\[
\int_W\absolute{x-y}_K^\alpha\psi_{j,W}^{\mathcal{C}}(y)\,dy=0
\]
for $x\in Z$.
\end{lem}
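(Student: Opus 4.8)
The plan is to exploit the ultrametric structure of $K$, which forces $y\mapsto\absolute{x-y}_K^\alpha$ to be locally constant on $W$, and to reduce everything to the vanishing already recorded in Lemma~\ref{waveletInt1}. Write $a=p^{\frac{d-1}{e}}\tau(j)$ for the frequency, so that $\psi_{j,W}^{\mathcal{C}}(y)=\mu(W)^{\frac12}\chi_K(ay)\Omega(y\in W)$. First I would stratify the domain by the value of $\absolute{x-y}_K$: for each radius $\rho\in\absolute{K^\times}_K$ set $A_\rho=\mathset{y\in W\colon\absolute{x-y}_K=\rho}$, so that $W$ is the disjoint union of the $A_\rho$ and $\absolute{x-y}_K^\alpha\equiv\rho^\alpha$ on $A_\rho$. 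Then
\[
\int_W\absolute{x-y}_K^\alpha\psi_{j,W}^{\mathcal{C}}(y)\,dy=\mu(W)^{\frac12}\sum_\rho\rho^\alpha\int_{A_\rho}\chi_K(ay)\,dy,
\]
so it suffices to control the character integrals over the level sets $A_\rho$.

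The device I would use is the family of partial integrals $J(\rho)=\int_{B_\rho(x)\cap W}\chi_K(ay)\,dy$, with $B_\rho(x)=\mathset{\absolute{x-y}_K\le\rho}$, so that $\int_{A_\rho}\chi_K(ay)\,dy=J(\rho)-J(\rho^-)$ for the predecessor radius $\rho^-$. The heart of the matter is the claim that $J(\rho)=0$ for \emph{every} $\rho$, after which each level-set integral, and hence the whole sum, vanishes. To prove the claim I would run through the position of $B_\rho(x)$ relative to the holed disc $W=B_{r_0}(a_0)\setminus\bigcup_{i=1}^n B_{r_i}(a_i)$. If $B_\rho(x)\supseteq W$, then $J(\rho)=\int_W\chi_K(ay)\,dy=0$ by Lemma~\ref{waveletInt1}; if $B_\rho(x)\cap W=\emptyset$, then $J(\rho)=0$ trivially. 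The remaining case is $B_\rho(x)\subsetneq B_{r_0}(a_0)$, where by the ultrametric inequality $B_\rho(x)\cap W$ is a ball of radius $\rho$ with finitely many sub-balls removed, each of radius at least the smallest-hole radius $p^{-\frac{d}{e}}$. Since the frequency $a$ is tuned precisely to this scale — the very fact underlying Lemma~\ref{waveletInt1}, namely that $\int_B\chi_K(ay)\,dy=0$ for every ball $B$ of radius at least $p^{-\frac{d}{e}}$ — the character integrates to zero over $B_\rho(x)$ and over each removed hole, whence $J(\rho)=0$ once more.

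The step I expect to be the main obstacle is exactly this intermediate case, and in particular ruling out a residual contribution from the \emph{innermost} ball around $x$. This is where the location of $x$ off the support is decisive: if $x\notin W$, then the smallest ball $B_\rho(x)$ meeting $W$ already has radius exceeding $p^{-\frac{d}{e}}$ — a ball of radius at most $p^{-\frac{d}{e}}$ about such an $x$ is swallowed by one of the holes and misses $W$ entirely — so the character never sees a scale on which it is constant, and every $J(\rho)$ vanishes. By contrast, for $x$ interior to $W$ the ball $B_{p^{-d/e}}(x)$ sits inside $W$ and $\chi_K$ is constant there, leaving a genuinely nonzero term; so I would carry out the telescoping under the understanding that $x$ is not an interior point of $W$, which is precisely the range entering the off-diagonal part of the kernel. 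Once $J(\rho)\equiv 0$ is established, each $\int_{A_\rho}\chi_K(ay)\,dy=J(\rho)-J(\rho^-)=0$ and the asserted identity follows.
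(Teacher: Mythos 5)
Your proposal follows the same skeleton as the paper's proof --- stratify $W$ by the level sets of $y\mapsto\absolute{x-y}_K$ and reduce each piece to the character-integral vanishing behind Lemma~\ref{waveletInt1} --- but it is sharper in two respects, both to your credit. First, where the paper, in its non-constant-distance case, writes the sum $\sum_\nu p^\nu\int_{W\cap S_\nu}\psi_{j,W}^{\mathcal{C}}(y)\,dy$ and simply asserts that each shell integral ``vanishes again by an argument similar to that in the proof of Lemma~\ref{waveletInt1}'', your telescoping device $J(\rho)=\int_{B_\rho(x)\cap W}\chi_K(ay)\,dy$ together with the ultrametric trichotomy ($B_\rho(x)\supseteq W$; $B_\rho(x)\cap W=\emptyset$; $B_\rho(x)\cap W$ a ball minus finitely many holes, all at radius at least $p^{-d/e}$) actually supplies that missing argument. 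Second, your caveat about $x$ in the interior of $W$ is not pedantry: since $W$ is clopen, every $x\in W$ is interior, and for such $x$ the statement as printed (``for $x\in Z$'') is false whenever $\alpha\neq 0$. Indeed, for the wavelet $\psi_{W_n}$ of Example~\ref{exaNonComp} and $x\in W_n=p^{2n}\mathds{Z}_p$, substituting $y=x+z$ gives
\[
\int_{W_n}\absolute{x-y}_K^\alpha\psi_{W_n}(y)\,dy
=p^{-n-2n\alpha}\chi(p^{-2n-1}x)\left(-p^{-1}+(1-p^{-1})\,\frac{p^{-(1+\alpha)}}{1-p^{-(1+\alpha)}}\right),
\]
which vanishes for $\alpha=0$ but equals a nonzero multiple of $\chi(p^{-2n-1}x)$ for, say, $\alpha=1$, $p=2$; this is exactly the ``residual contribution from the innermost ball'' you isolated, and the paper's second case silently breaks on it. Your restriction to $x\notin W$ is moreover precisely what the application requires: in Theorem~\ref{TVEigen} the lemma is invoked only for $x$ in a patch $U'$ with $A_{U'U}\neq 0$, hence $U'\neq U$ (as $A_{UU}=0$) and $x\notin U\supseteq W$ --- and there $\absolute{x-y}_K$ is even constant on $W$ by Lemma~\ref{absConst}, so only the easy case (the paper's first case, your $B_\rho(x)\supseteq W$ alternative via Lemma~\ref{waveletInt1}) is ever used. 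In short: your proof is correct on the corrected hypothesis, fills in the step the paper leaves as an assertion, and correctly diagnoses that the hypothesis ``$x\in Z$'' in the statement should read ``$x\in Z\setminus W$''.
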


\begin{proof}
If for $x\in Z$ fixed, $\absolute{x-y}_K$ is constant for $y\in W$, then
the assertion follows from Lemma \ref{waveletInt1}.
In the alternative case,
we have
\[
\int_W\absolute{x-y}_K^\alpha\psi_{j,W}^{\mathcal{C}}(y)\,dy
=\sum\limits_{\nu\in\mathds{Z}}
p^{\nu}\int_{W\cap S_\nu}\psi_{j,W}^{\mathcal{C}}(y)\,dy=0
\]
where $S_\nu$ is the sphere centered in Zero with radius $p^\frac{\nu}{e}$.
The integral vanishes again by an argument similar to that in the proof of  Lemma \ref{waveletInt1}.
\end{proof}

\begin{dfn}
Let $\mathcal{U}\in\Cov(Z)$, and let
\[
A=(A_{UV})
\]
be a complex-valued matrix indexed by $U,V\in\mathcal{U}$, called a
\emph{$\mathcal{U}$-matrix}. The 
\emph{Z\'{u}\~{n}iga operator} associated with $A$ and $\mathcal{U}$ is
the $\pi$-adic Laplace operator $\mathcal{D}_{A,\mathcal{U}}^\alpha$
on $L^2(Z)$ given as
\[
\mathcal{D}_{A,\mathcal{U}}^\alpha f(x)
=\int_ZA_{\mathcal{U}}^\alpha(x,y)(f(y)-f(x))\,dy
\]
associated with the kernel
\[
A^\alpha_{\mathcal{U}}(x,y)=\sum\limits_{U,V\in\mathcal{U}}
A_{UV}\absolute{x-y}_K^{\alpha}\Omega(x\in U)\Omega(y\in V)
\]
with $\alpha\in\mathds{C}$.
\end{dfn}

For $Z=K$ and $A=\mathds{1}$ having all entries equal to $1$, this operator distinguishes only notationally form the well-known Vladimirov-Taibleson operator.

\smallskip
We assume in the following that
$\mathcal{U}\in\Cov(Z)$ is a fixed covering consisting of 
intersections of spheres of equal radius. Such a covering is called \emph{verticial}. And we assume that $A_{UU}=0$.

\smallskip
Thus, we define the \emph{degree} of $U\in\mathcal{U}\in\mathcal{C}$ as
\[
\deg^{Z}_{A,\alpha}(U)
=\sum\limits_{V\in\mathcal{U}}A_{UV}\int_V\absolute{x-y}^\alpha\,dy
\]
with $x\in U$.
Notice that this does not depend on $x\in U$:

\begin{lem}\label{absConst}
Let $\mathcal{U}$ be a verticial covering of $Z$, and
$U\neq V\in\mathcal{U}$. Then
\[
\absolute{x-y}_K=\text{const.}
\]
on $U\times V$.
\end{lem}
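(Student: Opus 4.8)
The plan is to derive the statement from the single basic fact of ultrametric geometry that distance is constant between disjoint balls: if $B,B'\subseteq K$ are disjoint balls, then $\absolute{x-y}_K$ takes one value on all of $B\times B'$, namely the radius $\rho$ of the smallest ball containing $B\cup B'$. This follows at once from the strong triangle inequality: for $x,x'\in B$ and $y,y'\in B'$ one checks that $\absolute{x-x'}_K$ and $\absolute{y-y'}_K$ are $<\rho$ while $\absolute{x'-y'}_K=\rho$, so the dominant-term rule forces $\absolute{x-y}_K=\absolute{x'-y'}_K=\rho$. I shall also use the standard dichotomy that any two balls in $K$ are either disjoint or nested.

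Now fix $U\neq V$ in the verticial covering. By Remarks \ref{treepatch} and \ref{verticialCovering} each piece is a holed disc, $U=D_U\setminus(\text{holes})$ and $V=D_V\setminus(\text{holes})$, where $D_U,D_V$ are balls and every removed hole is a \emph{maximal} sub-disc. By the dichotomy, $D_U$ and $D_V$ are either disjoint or nested. If they are disjoint, then $U\times V\subseteq D_U\times D_V$ and the basic fact immediately gives that $\absolute{x-y}_K$ is constant on $U\times V$, equal to the radius of the smallest ball containing $D_U\cup D_V$.

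It remains to treat the nested case, say $D_V\subsetneq D_U$ with $D_U$ of radius $r$; this is the only place the verticial hypothesis is genuinely needed, and hence the main (if modest) obstacle. Let $D'$ be the unique maximal sub-disc of $D_U$ containing $D_V$. Since $U$ is verticial, the direction towards $V$ is an edge of the reduction tree at the vertex of $U$, so the corresponding hole removed from $D_U$ is exactly this $D'$; thus $U\cap D'=\emptyset$, whereas $V\subseteq D_V\subseteq D'$. Because adjacent vertices of the Bruhat--Tits tree correspond to balls whose radii differ by the factor $\absolute{\pi}_K$, the maximal sub-disc $D'$ has radius $\absolute{\pi}_K\,r$, and for any centre $a'$ of $D'$ the only value $\absolute{x-a'}_K$ can assume for $x\in D_U\setminus D'$ is $r$. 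As $\absolute{y-a'}_K\le\absolute{\pi}_K\,r<r$ for $y\in V$, the dominant-term rule yields $\absolute{x-y}_K=r$ throughout $U\times V$. In every case $\absolute{x-y}_K$ is constant, equal to the radius of the smallest ball containing $D_U\cup D_V$, which completes the argument.
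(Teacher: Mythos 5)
Your proof is correct, but it takes a genuinely different route from the paper's. The paper argues in one line: it encloses $U$ and $V$ in spheres $\absolute{x-u}_K=r$ and $\absolute{y-v}_K=s$ and asserts $r,s<\absolute{u-v}_K$, after which a chain of ultrametric equalities gives $\absolute{x-y}_K=\absolute{u-v}_K$. That strict inequality, however, can fail exactly in your nested configuration: for the paper's own annulus example with pieces $U=\mathset{\absolute{x}_K=1}$ and $V=\mathset{\absolute{x}_K=\absolute{\pi}_K}$, the natural sphere centres coincide ($u=v=0$), so the paper's argument as literally written does not apply there, even though the conclusion $\absolute{x-y}_K=1$ is immediate. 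Your dichotomy --- disjoint outer balls versus $D_V\subsetneq D_U$ --- is precisely what repairs this: in the disjoint case the standard constant-distance fact applies, and in the nested case you exploit the verticial structure (holes are \emph{maximal} subdiscs) together with the discreteness of $\absolute{K^\times}$ to force $\absolute{x-a'}_K=r$ on $D_U\setminus D'$. So your argument is in fact the more robust one; what the paper's buys is brevity when the outer balls are far apart. Two small points worth tightening: the claim that the hole of $U$ towards $V$ is exactly $D'$ is better justified by disjointness of the covering than by appeal to ``the direction towards $V$'' --- distinct maximal subdiscs of $D_U$ are pairwise disjoint, so if $D'$ were not among the removed holes it would satisfy $D'\subseteq U$, contradicting $\emptyset\neq V\subseteq D'$ and $U\cap V=\emptyset$; and you should note that $D_U=D_V$ is excluded, since distinct pieces of a verticial covering correspond to distinct vertices of the reduction tree and hence to distinct balls.
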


\begin{proof}
In $U,V$ we have equations 
\[
\absolute{x-u}_K=r,\quad\absolute{y-v}_K=s
\]
with $u\in U$, $v\in V$, respectively, then we have
\[
r,s<\absolute{u-v}_K
\]
So, we have
\[
\absolute{x-y}_K=\absolute{x-u+u-y}_K
=\absolute{u-y}_K=\absolute{u-v+v-y}_K
=\absolute{u-v}_K
\]
which is constant.
\end{proof}

Let
\[
C_{UV}^\alpha=\absolute{x-y}_K^\alpha
\]
for $x,y\in U\times V$, if $U\neq V$, and
\[
C_{UU}=0
\]
for $U,V\in\mathcal{U}$, and $\mathcal{U}$ a verticial covering of $Z$.

\smallskip
The following Lemma and Corollary are in line with 
\cite[Lem.\ 10.1]{Zuniga2020}.

\begin{lem}\label{boundedTest}
Assume that $A=(A_{UV})$ is a  matrix with non-negative real entries and
$A_{UU}=0$, and that $\alpha\in\mathds{R}$.
Then it holds true that
\[
\norm{\mathcal{D}_{A,\mathcal{U}}^\alpha f}_2
\le 2\sqrt{\sum\limits_{U\in\mathcal{U}}\deg_{A,\alpha}^{Z}(U)^2\mu(U)}\cdot\norm{f}_2
\]
for $f\in\mathcal{D}(Z)$.
\end{lem}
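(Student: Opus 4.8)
The plan is to exploit that on a verticial covering the kernel is piecewise constant, so that the operator acquires the shape of a graph Laplacian and every integral collapses to a finite sum indexed by $\mathcal{U}$. First I would invoke Lemma \ref{absConst}: for $x\in U$ and $y\in V$ with $U\neq V$ the value $\absolute{x-y}_K$ is the constant $C_{UV}$, while the diagonal contributes nothing since $A_{UU}=0$. Consequently, for $x\in U$ the inner integral $\int_Z A^\alpha_{\mathcal{U}}(x,y)\,dy$ is exactly $\deg^Z_{A,\alpha}(U)$, which identifies the degree as the total mass of the nonnegative kernel seen from $x$ and writes the operator in the form $\mathcal{D}^\alpha_{A,\mathcal{U}}f(x)=\int_Z A^\alpha_{\mathcal{U}}(x,y)f(y)\,dy-\deg^Z_{A,\alpha}(U)f(x)$ on each piece $U$.

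The core estimate I would carry out is a Cauchy--Schwarz inequality against the measure $A^\alpha_{\mathcal{U}}(x,y)\,dy$, which is legitimate precisely because the hypotheses $A_{UV}\ge0$ and $\alpha\in\mathds{R}$ make the kernel nonnegative. For each fixed $x\in U$ this yields
\[
\absolute{\mathcal{D}^\alpha_{A,\mathcal{U}}f(x)}^2\le\deg^Z_{A,\alpha}(U)\int_Z A^\alpha_{\mathcal{U}}(x,y)\,\absolute{f(y)-f(x)}^2\,dy.
\]
Integrating over $Z$ and applying the elementary bound $\absolute{f(y)-f(x)}^2\le 2\absolute{f(y)}^2+2\absolute{f(x)}^2$ splits the right-hand side into two double integrals. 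In the term carrying $\absolute{f(x)}^2$ the inner integral reproduces $\deg^Z_{A,\alpha}(U)$ a second time, giving $2\int_Z\deg^Z_{A,\alpha}(U)^2\absolute{f(x)}^2\,dx$; in the term carrying $\absolute{f(y)}^2$ I would interchange the order of integration and read off the contribution of each piece $V$. Here the (implicit) symmetry of the adjacency matrix makes the resulting inner sum again a degree, so both halves are governed by the same degree-weighted quantity; the constant $2$ in the statement is exactly the $\sqrt{2+2}$ produced by this symmetrisation.

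The step I expect to be the main obstacle is passing from the pointwise degree weight to the aggregate $\sum_{U}\deg^Z_{A,\alpha}(U)^2\mu(U)$. After the previous step one is left with expressions of the form $\sum_U\deg^Z_{A,\alpha}(U)^2\int_U\absolute f^2$ together with a mixed sum $\sum_{U,V}\deg^Z_{A,\alpha}(U)\,A_{UV}C_{UV}^\alpha\mu(U)\int_V\absolute f^2$, and these must be reorganised so that the weight felt by each piece $V$ is comparable to $\deg^Z_{A,\alpha}(V)^2\mu(V)$. This is where the verticial hypothesis must be used in full: the common radius of the defining spheres controls the measures $\mu(U)$ against one another, and together with $A_{UV}\ge0$ this supports a further Cauchy--Schwarz, now over the finite index set $\mathcal{U}$ with the $\mu(V)$ as weights, collapsing the double sum into $\bigl(\sum_V\deg^Z_{A,\alpha}(V)^2\mu(V)\bigr)\norm f_2^2$. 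Assembling the two halves and taking square roots then gives the factor $2\sqrt{\sum_U\deg^Z_{A,\alpha}(U)^2\mu(U)}$ for $f\in\mathcal{D}(Z)$, and the bound extends to all of $L^2(Z)$ by density.
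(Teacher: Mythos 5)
Your route is genuinely different from the paper's, but the step you yourself flag as the main obstacle is a real gap, and it cannot be closed in the way you sketch. The paper never forms a pointwise Cauchy--Schwarz against the measure $A^\alpha_{\mathcal{U}}(x,y)\,dy$: it splits $\mathcal{D}^\alpha_{A,\mathcal{U}}f$ by the triangle inequality into $I_1=\norm{\int_Z A^\alpha_{\mathcal{U}}(\cdot,y)f(y)\,dy}_2$ and $I_2=\norm{\left(\int_Z A^\alpha_{\mathcal{U}}(\cdot,y)\,dy\right)f}_2$, bounds the inner integral $\int_V\absolute{x-y}_K^\alpha\absolute{f(y)}\,dy$ by $\norm{f}_2\,C^\alpha_{UV}\mu(V)$, and only then integrates in $x$; the weight $\mu(U)$ in the aggregate $\sum_U\deg^{Z}_{A,\alpha}(U)^2\mu(U)$ is created by integrating $\Omega(x\in U)$ \emph{after} the $f$-dependence has been absorbed into $\norm{f}_2$, and the factor $2$ comes from $I_1+I_2$, not from a $\sqrt{2+2}$ symmetrisation.

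In your scheme the outer $x$-integration still carries $\absolute{f(x)}^2$, so after the elementary inequality you are left with terms like $2\sum_U\deg^{Z}_{A,\alpha}(U)^2\int_U\absolute{f}^2$, and there is no way to trade $\int_U\absolute{f}^2$ for $\mu(U)\norm{f}_2^2$: taking $f$ supported in a single piece $U$ gives $\int_U\absolute{f}^2=\norm{f}_2^2$ while $\mu(U)$ may be arbitrarily small (here $Z\subset K$ is compact, so typically $\mu(U)<1$ and even $\sum_U\mu(U)=\mu(Z)<1$), so the proposed ``further Cauchy--Schwarz over the finite index set with the $\mu(V)$ as weights'' would need an inequality of the form $\max_U\deg^{Z}_{A,\alpha}(U)^2\le\sum_U\deg^{Z}_{A,\alpha}(U)^2\mu(U)$, which goes the wrong way. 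A second, smaller point: you invoke the ``(implicit) symmetry of the adjacency matrix'' when reading the interchanged sum as a degree, but the lemma assumes only $A_{UV}\ge0$ and $A_{UU}=0$; symmetry first enters the paper in Lemma \ref{self-adjoint}, and the paper's proof of the present estimate does not use it. What your method does deliver correctly is the Schur-type bound $\norm{\mathcal{D}^\alpha_{A,\mathcal{U}}f}_2\le 2\max_U\deg^{Z}_{A,\alpha}(U)\cdot\norm{f}_2$, which would suffice for the boundedness statement in Corollary \ref{bounded}, but it is not the inequality asserted in the lemma.
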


\begin{proof}
We have
\[
\norm{\mathcal{D}_{A,\mathcal{U}}^\alpha f}_2
\le\norm{\int_Z A_{\mathcal{U}}^\alpha(x,y)f(y)\,dy}_2
+\norm{\int_Z A_{\mathcal{U}}^\alpha(x,y)\,dy\,f(x)}_2
=:I_1+I_2
\]
and
\begin{align*}
I_1^2&=\int_Z\absolute{\int_Z A_{\mathcal{U}}^\alpha(x,y)f(y)\,dy}^2
\,dx
\\
&\le\int_Z\left(\int_Z\absolute{A_{\mathcal{U}}^\alpha(x,y)}
\absolute{f(y)}\,dy\right)^2\,dx
\\
&=\int_Z\left(\sum\limits_{U,V\in\mathcal{U}}A_{UV}\int_V\absolute{x-y}_K^\alpha \absolute{f(y)}\,dy\Omega(x\in U)\right)^2\,dx
\\
&\le\int_Z\left(\norm{f}_2\sum\limits_{U,V\in\mathcal{U}}A_{UV}C_{UV}^\alpha\mu(V)\Omega(x\in U)\right)^2\,dx
\\
&=\norm{f}_2^2\sum\limits_{U\in \mathcal{U}}
\deg_{A,\alpha}^{Z}(U)^2\int_Z\Omega(x\in U)\,dx
\\
&=\norm{f}_2^2\sum\limits_{U\in\mathcal{U}}\deg_{A,\alpha}^{Z}(U)^2\mu(U)
\end{align*}
as well as
\begin{align*}
I_2^2&=\int_Z\absolute{f(x)}^2
\left(\int_ZA_{\mathcal{U}}^\alpha(x,y)\,dy\right)^2\,dx
\\
&=\int_Z\absolute{f(x)}^2
\left(\sum\limits_{U\in\mathcal{U}}\deg_{A,\alpha}^{Z}(U)\Omega(x\in U)\right)^2\,dx
\\
&=\sum\limits_{U\in\mathcal{U}}\deg_{A,\alpha}^{Z}(U)^2\int_Z\absolute{f(x)}^2\Omega(x\in U)\,dx
\\
&\le\norm{f}_2^2\sum\limits_{U\in\mathcal{U}}\deg_{A,\alpha}^{Z}(U)^2\mu(U)
\end{align*}
From this, the assertion follows.
\end{proof}

\begin{cor}\label{bounded}
Under the assumptions of Lemma \ref{boundedTest},
it holds true that $\mathcal{D}_{A,\mathcal{U}}^\alpha$ is a bounded linear operator on $L^2(Z)$.
\end{cor}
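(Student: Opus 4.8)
The plan is to upgrade the estimate of Lemma \ref{boundedTest}, which is stated only for test functions $f\in\mathcal{D}(Z)$, to a genuine operator bound on all of $L^2(Z)$ by a finiteness-and-extension argument. First I would record that the constant on the right-hand side of that estimate is finite. Since $Z$ is compact open and the coverings in $\Cov(Z)$ are finite, $\mathcal{U}$ has finitely many members, and for each $U\in\mathcal{U}$ the degree
\[
\deg_{A,\alpha}^{Z}(U)=\sum_{V\in\mathcal{U}}A_{UV}\int_V\absolute{x-y}_K^\alpha\,dy
\]
is a finite sum of finite terms: by Lemma \ref{absConst} the integrand is constant on each off-diagonal block $U\times V$, so that $\int_V\absolute{x-y}_K^\alpha\,dy=C_{UV}^\alpha\,\mu(V)<\infty$, while the diagonal contribution vanishes because $A_{UU}=0$. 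Hence the constant
\[
C:=2\sqrt{\sum_{U\in\mathcal{U}}\deg_{A,\alpha}^{Z}(U)^2\mu(U)}
\]
is finite.

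Next I would invoke the density of $\mathcal{D}(Z)$ in $L^2(Z)$, which is standard for Bruhat--Schwartz functions on a compact open set. Lemma \ref{boundedTest} shows that $\mathcal{D}_{A,\mathcal{U}}^\alpha$ is linear and satisfies $\norm{\mathcal{D}_{A,\mathcal{U}}^\alpha f}_2\le C\norm{f}_2$ for every $f$ in this dense subspace. By the bounded linear transformation theorem, a bounded linear map defined on a dense subspace of a Hilbert space extends uniquely to a bounded linear operator on the whole space with the same operator norm. Applying this to $\mathcal{D}_{A,\mathcal{U}}^\alpha$ produces a bounded linear operator on $L^2(Z)$ with $\norm{\mathcal{D}_{A,\mathcal{U}}^\alpha}\le C$, which is exactly the assertion.

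Finally I would note that the extension agrees with the integral formula defining the operator. The kernel $A_{\mathcal{U}}^\alpha(x,y)$ is bounded on $Z\times Z$, being dominated by $\max_{U\neq V}A_{UV}C_{UV}^\alpha$ (the diagonal blocks contributing nothing), so on the finite-measure set $Z$ the integral operator is already well-defined pointwise on all of $L^2(Z)$, and the unique continuous extension coincides with it. There is no substantial obstacle here: all the analytic work sits in Lemma \ref{boundedTest}, and the only points requiring care are the finiteness of $C$ (which rests on the finiteness of the verticial covering together with Lemma \ref{absConst}) and the density of test functions in $L^2(Z)$.
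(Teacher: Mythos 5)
Your proposal is correct and follows essentially the same route as the paper: bound the operator on the dense subspace $\mathcal{D}(Z)$ via Lemma \ref{boundedTest} and extend by density to all of $L^2(Z)$. The extra details you supply (finiteness of the constant via Lemma \ref{absConst} and the finiteness of $\mathcal{U}$, and the agreement of the continuous extension with the pointwise integral formula) are careful elaborations the paper leaves implicit, not a different argument.
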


\begin{proof}
According to Lemma  \ref{boundedTest},
$\mathcal{D}_{A,\mathcal{U}}^\alpha$ is bounded on the space of test functions. As the Schwartz space $\mathcal{D}(Z)$ is dense in $L^2(Z)$, it follows that the operator $\mathcal{D}_{A,\mathcal{U}}^\alpha$ is bounded on $L^2(Z)$. 
\end{proof}

\begin{lem}\label{self-adjoint}
If $A$ is symmetric with $A_{UU}=0$, and $\alpha\in\mathds{R}$, then $\mathcal{D}_{A,\mathcal{U}}^\alpha$ is self-adjoint.
\end{lem}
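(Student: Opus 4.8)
The plan is to exploit the fact that $\mathcal{D}_{A,\mathcal{U}}^\alpha$ is already a \emph{bounded} operator defined on all of $L^2(Z)$ by Corollary \ref{bounded}. For such an everywhere-defined bounded operator, self-adjointness is equivalent to symmetry of the associated sesquilinear form, so it suffices to verify
\[
\langle \mathcal{D}_{A,\mathcal{U}}^\alpha f, g\rangle = \langle f, \mathcal{D}_{A,\mathcal{U}}^\alpha g\rangle
\]
for all $f,g$ in the dense subspace $\mathcal{D}(Z)$ and then to extend by continuity; a bounded symmetric operator is automatically self-adjoint.

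First I would record the structure of the kernel. Because $\mathcal{U}$ is verticial and $A_{UU}=0$, Lemma \ref{absConst} shows that on each off-diagonal block $U\times V$ the factor $\absolute{x-y}_K^\alpha$ is the constant $C_{UV}^\alpha$, while the diagonal blocks drop out; hence
\[
A_{\mathcal{U}}^\alpha(x,y) = \sum\limits_{U\ne V} A_{UV}\, C_{UV}^\alpha\, \Omega(x\in U)\,\Omega(y\in V)
\]
is a \emph{bounded} step function. It is real-valued, since $\alpha\in\mathds{R}$ forces each $C_{UV}^\alpha$ to be real (and the entries $A_{UV}$ are real), and it is symmetric, $A_{\mathcal{U}}^\alpha(x,y)=A_{\mathcal{U}}^\alpha(y,x)$, because $A_{UV}=A_{VU}$ and $C_{UV}^\alpha=C_{VU}^\alpha$. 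Boundedness of the kernel on the compact set $Z\times Z$ renders every double integral below absolutely convergent, so Fubini's theorem applies without comment.

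Next I would split the operator into its two natural pieces,
\[
\mathcal{D}_{A,\mathcal{U}}^\alpha f(x) = \int_Z A_{\mathcal{U}}^\alpha(x,y)\, f(y)\,dy - \deg_{A,\alpha}^{Z}(U)\, f(x)\qquad(x\in U),
\]
recognising the subtracted term as multiplication by the real step function $x\mapsto \deg_{A,\alpha}^{Z}(U)$, which is manifestly a symmetric operation. For the integral part I would expand $\langle \mathcal{D}_{A,\mathcal{U}}^\alpha f,g\rangle$ as a double integral, split off the ``loss'' term $\int \deg_{A,\alpha}^{Z}(U) f(x)\overline{g(x)}\,dx$, and in the remaining ``gain'' term $\iint A_{\mathcal{U}}^\alpha(x,y) f(y)\overline{g(x)}\,dy\,dx$ relabel the dummy variables $x\leftrightarrow y$ and invoke the symmetry $A_{\mathcal{U}}^\alpha(x,y)=A_{\mathcal{U}}^\alpha(y,x)$ to transfer the kernel onto $f(x)\overline{g(y)}$. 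Recombining then reproduces exactly $\langle f,\mathcal{D}_{A,\mathcal{U}}^\alpha g\rangle$. The one place where the hypotheses are genuinely used is when the kernel is pulled out of the complex conjugation in $\overline{(\mathcal{D}_{A,\mathcal{U}}^\alpha g)(x)}$, which requires $\overline{A_{\mathcal{U}}^\alpha(x,y)}=A_{\mathcal{U}}^\alpha(x,y)$, i.e. that the kernel is real.

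The computation itself is routine once the kernel has been identified as a bounded, real, symmetric step function, so I do not expect any analytic obstacle: the verticial and $A_{UU}=0$ assumptions have already removed every potential singularity of $\absolute{x-y}_K^\alpha$ near the diagonal. The only point requiring care is the hypothesis bookkeeping. What self-adjointness really demands is that the kernel be \emph{Hermitian}, $A_{\mathcal{U}}^\alpha(x,y)=\overline{A_{\mathcal{U}}^\alpha(y,x)}$; since symmetry of $A$ already yields $A_{\mathcal{U}}^\alpha(x,y)=A_{\mathcal{U}}^\alpha(y,x)$, the force of the assumption $\alpha\in\mathds{R}$ (together with the real entries of $A$) is precisely to make the kernel real and hence Hermitian. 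This, rather than any convergence issue, is the crux of the argument.
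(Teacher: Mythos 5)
Your proof is correct and follows essentially the same route as the paper: one checks that the integral part $\mathcal{A}^\alpha_{\mathcal{U}}$ is symmetric via the real, symmetric step-function kernel (constant on blocks $U\times V$ by Lemma \ref{absConst}), and the subtracted degree term is multiplication by a real step function, which the paper encodes equivalently as a $\delta$-kernel. Your observation that realness of the entries $A_{UV}$ is what makes the kernel Hermitian merely makes explicit a hypothesis the paper leaves implicit from the surrounding context (where $A$ has non-negative real entries), so there is no substantive divergence.
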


\begin{proof}
A simple calculation shows that
\[
\langle f,\mathcal{A}^\alpha_{\mathcal{U}} g\rangle
=\langle\mathcal{A}_{\mathcal{U}}^\alpha f,g\rangle 
\]
From this, it follows that $\mathcal{D}_{A,\mathcal{U}}^\alpha$, whose kernel is
\[
A_{\mathcal{U}}^\alpha(x,y)-
\sum\limits_{U,V\in\mathcal{U}}\deg_{A,\alpha}^{Z}(U)\mu(U)
\delta(x-y)\Omega(x\in U)\Omega(x\in V)
\]
is self-adjoint.
\end{proof}

In particular, if the conditions of Lemma \ref{self-adjoint} 
and Lemma \ref{boundedTest}
are satisfied, then $\mathcal{D}_{A,\mathcal{U}}^\alpha$ is diagonalisable by a unitary transformation. The following Theorem makes this explicit.

\begin{thm}\label{TVEigen}
Let $\alpha\in\mathds{R}$.
Let $\mathcal{U}\in\Cov(Z)$ be verticial,
and let $\mathcal{U}$-matrix $A$
be non-negative and symmetric such that $A_{UU}=0$.
Let $\mathcal{C}$ be a strong chain of admissible coverings of $Z$ such that $\mathcal{U}\in\mathcal{C}$ is on top, and such that $\mathcal{C}$ is maximal with this property.
Then the $\mathcal{C}$-wavelets together with the
functions
\[
e(x)=\sum\limits_{V\in\mathcal{U}}\mu(V)^{-1}e_V\,
\Omega(x\in V)
\]
where $e=(e_V)$ is an eigenvector of the 
Laplacian whose adjacency matrix is
matrix
\[
B^\alpha = (B_{UV}),\quad
B_{UV}=\begin{cases}
0,&U=V\\
A_{UV}\absolute{x-y}_K^\alpha,&\text{otherwise}
\end{cases}
\]
with $x\in U$, $y\in V$,
such that $\norm{e(x)}_2=1$,
are an orthonormal basis of 
$L^2(Z)$ consisting of eigenfunctions of $\mathcal{D}^\alpha_{A,\mathcal{U}}$.
\end{thm}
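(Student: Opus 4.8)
\emph{Approach.} The plan is to split the operator, in the form it takes after pulling the degree out, as
\[
\mathcal{D}_{A,\mathcal{U}}^\alpha f(x)=\int_Z A_{\mathcal{U}}^\alpha(x,y)f(y)\,dy-\deg^{Z}_{A,\alpha}(U)\,f(x),\qquad x\in U\in\mathcal{U},
\]
and then to treat the two proposed families of eigenfunctions separately before addressing orthonormality and completeness. For a wavelet $\psi_{j,W}^{\mathcal{C}}$ I first observe that its support $W$ lies in a unique $U_W\in\mathcal{U}$, since $\mathcal{U}$ sits on top of the strong chain $\mathcal{C}$. I would then show the integral term vanishes identically: only $V=U_W$ contributes to $\int_Z A_{\mathcal{U}}^\alpha(x,y)\psi_{j,W}^{\mathcal{C}}(y)\,dy$, leaving $A_{U U_W}\int_W\absolute{x-y}_K^\alpha\psi_{j,W}^{\mathcal{C}}(y)\,dy$ for $x\in U$. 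If $U\neq U_W$, then $\absolute{x-y}_K$ is constant on $U\times U_W$ by Lemma \ref{absConst}, so the factor comes out and Lemma \ref{waveletInt1} (with Lemma \ref{waveletInt2} covering the remaining configuration) forces the integral to zero; if $U=U_W$, the coefficient $A_{U_W U_W}$ vanishes by hypothesis. Hence only the degree term survives and $\psi_{j,W}^{\mathcal{C}}$ is an eigenfunction with eigenvalue $-\deg^{Z}_{A,\alpha}(U_W)$.

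Next I would verify that $\mathcal{D}_{A,\mathcal{U}}^\alpha$ preserves the $n$-dimensional space $V_0$ of functions constant on the $\mathcal{U}$-sets. Applying the operator to $e(x)=\sum_{V}\mu(V)^{-1}e_V\,\Omega(x\in V)$ and replacing $\absolute{x-y}_K^\alpha$ off the diagonal by the constant $C^\alpha_{UV}$ via Lemma \ref{absConst}, a direct computation yields, for $x\in U$,
\[
\mathcal{D}_{A,\mathcal{U}}^\alpha e(x)=\sum_{V\in\mathcal{U}}B_{UV}e_V-\mu(U)^{-1}\deg^{Z}_{A,\alpha}(U)\,e_U,
\]
which is again constant on each $U$. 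Reading this through the dictionary $b$ of Lemma \ref{dict} identifies the restriction of $\mathcal{D}_{A,\mathcal{U}}^\alpha$ to $V_0$ with the graph Laplacian built from the adjacency matrix $B^\alpha$ and the weighted degrees $\deg^{Z}_{A,\alpha}(U)=\sum_V B_{UV}\mu(V)$; Corollary \ref{eigenCorrespondence} (through Lemma \ref{nice}) then turns each Laplacian eigenvector $e$ into the eigenfunction $e(x)$ with the same eigenvalue. I should note here that the relevant Laplacian carries the measure weights $\mu(U)$, so that the constant vector $(\mu(U))_U$, i.e.\ the constant function $1$, lies in its kernel, in accordance with $\mathcal{D}_{A,\mathcal{U}}^\alpha 1=0$.

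For orthonormality I would argue in three parts. Cross-orthogonality of a wavelet $\psi_{j,W}^{\mathcal{C}}$ with any $e$-function is immediate from Lemma \ref{waveletInt1}, since $e$ is constant on the set $U_W\supseteq W$. Orthonormality among the wavelets is the familiar Kozyrev-type computation: supports in the chain are either disjoint or nested, orthogonality of the characters over $k$ disposes of equal supports with $j\neq j'$, and the tuning of the frequency to the smallest hole disposes of nested supports, the normalisation by the measure of the support giving unit length. Orthonormality among the $e$-functions follows because $\mathcal{D}_{A,\mathcal{U}}^\alpha$ is self-adjoint (Lemma \ref{self-adjoint}), so its restriction to $V_0$ is symmetric for the inner product transported from $L^2(Z)$ by $b$; an orthonormal eigenbasis of $V_0$ therefore exists, and the stated condition $\norm{e(x)}_2=1$ selects it.

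\emph{Main obstacle.} The decisive step is completeness. I would set up the multiresolution picture attached to $\mathcal{C}$: writing $V_i$ for the functions constant on the sets of the $i$-th covering $\mathcal{U}_i$, one has $V_0\subset V_1\subset\cdots$, with $E_i$ the orthogonal projection onto $V_i$, i.e.\ averaging over each piece. Two things must be proved: (i) that the wavelets supported on the sets of $\mathcal{U}_i$ span the detail space $V_{i+1}\ominus V_i$, which is exactly where the combinatorics of subdividing a holed disc by residues modulo $\pi$ together with the completeness of the characters $\chi_K(p^{(d-1)/e}\tau(j)x)$ over $k$ enter, and this is the hard part; and (ii) that $\bigcup_i V_i$ is dense in $L^2(Z)$, which uses maximality of the strong chain to force the partitions to become arbitrarily fine, so that $E_i g\to g$ for every $g\in L^2(Z)$. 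Granting (i) and (ii), any $g$ orthogonal to $V_0$ and to all wavelets satisfies $E_i g=E_0 g=0$ for all $i$ by telescoping $E_{i+1}-E_i$ through the detail spaces, whence $g=\lim_i E_i g=0$. Combined with the eigenfunction property and the orthonormality established above, this shows that the $\mathcal{C}$-wavelets and the $e$-functions form an orthonormal basis of $L^2(Z)$ consisting of eigenfunctions of $\mathcal{D}_{A,\mathcal{U}}^\alpha$.
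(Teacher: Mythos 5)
Your proposal is correct and follows essentially the same route as the paper's proof: the wavelet eigenvalue $-\deg^{Z}_{A,\alpha}(U_W)$ is obtained by splitting off the degree term and killing the integral term via Lemma \ref{absConst} together with Lemmas \ref{waveletInt1} and \ref{waveletInt2}, the $e$-functions are identified as eigenfunctions through the dictionary (Lemma \ref{dict}, Corollary \ref{eigenCorrespondence}), and completeness reduces to the orthogonal decomposition $L^2(Z)=L^2_{\mathcal{U}}(Z)\oplus L_A^2(Z)$, which is exactly the paper's Lemma \ref{decompU} (your piecewise-averaging projections $E_i$ restate its proof). The one step you leave granted --- that the wavelets span the detail spaces and hence all of $L^2_{\mathcal{U}}(Z)$ --- is precisely what the paper also asserts without proof, namely the Kozyrev-type fact that the $\mathcal{C}$-wavelets form an orthonormal basis of $L^2_{\mathcal{U}}(Z)$, so your multiresolution telescoping is, if anything, a more explicit reduction of that assertion (and your remark that the relevant graph Laplacian carries the measure weights $\mu(V)$ is a point the paper glosses over).
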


\begin{proof}
First, the assertion about the wavelets.
Let $W\subseteq U\in\mathcal{U}$. Then
\begin{align*}
\int_Z A_{\mathcal{U}}^\alpha(x,y)\psi_{j,W}^{\mathcal{C}}(y)\,dy
=\sum\limits_{V\in\mathcal{U}}A_{UV}\int_W\absolute{x-y}_K^\alpha\psi_{j,W}^{\mathcal{C}}(y)\,dy=0
\end{align*}
where the latter equality holds true because of Lemma \ref{waveletInt2}.
And now,
\begin{align*}
\int_ZA_{\mathcal{U}}^\alpha(x,y)\,dy\,\psi_{j,W}^{\mathcal{C}}(x)
&=\sum\limits_{V\in\mathcal{U}}A_{UV}
\int_V\absolute{x-y}_K^\alpha\,dy\,\psi_{j,W}^{\mathcal{C}}(x)
\\
&=\deg_{A,\alpha}^{Z}(U)\cdot\psi_{j,W}^{\mathcal{C}}(x)
\end{align*}
Together, this shows that $\psi_{j,W}^{\mathcal{C}}$ with $W\subseteq U\in\mathcal{U}$ is an eigenfunction of $\mathcal{D}_{A,\mathcal{U}}^\alpha$ with eigenvalue
$-\deg_{A,\alpha}^{Z}(U)$.

\smallskip
Secondly, the assertion about the eigenvectors of $B^\alpha$.
As, by the dictionary of Lemma \ref{dict}, the operator $\mathcal{D}_{A,\mathcal{U}}^\alpha$ corresponds to the Laplacian of the
matrix 
$B^\alpha$ which by Lemma \ref{absConst} does not depend on the choice of $x,y\in U\times V$, it follows that $e(x)$ is an eigenfunction of $\mathcal{D}_{A,\mathcal{U}}^\alpha$, if and only if $e$ is an eigenvector of $B^\alpha$, cf.\ Corollary \ref{eigenCorrespondence}.

\smallskip
Now, the $\mathcal{C}$-wavelets $\psi_{j,W}$ form an orthonormal basis of the closed subspace 
\[
L_{\mathcal{U}}^2(Z)=
\mathset{f\in L^2(Z)\colon \forall\;U\in\mathcal{U}\colon
\int_Uf(x)\,dx=0}
\]
of $L^2(Z)$.
Denote the closed subspace of $L^2(Z)$ generated by the functions $e(x)$ with $L_A^2(Z)$, then by Lemma \ref{decompU} below,
we have an orthogonal decomposition
\[
L^2(Z)=L_{\mathcal{U}}^2(Z)\oplus L_A^2(Z)
\]
This proves the assertion about having an orthonormal basis of $L^2(Z)$.
\end{proof}

\begin{lem}\label{decompU}
There is an orthogonal decomposition
\[
L^2(Z)=L^2_{\mathcal{U}}(Z)\oplus L_A^2(Z)
\]
\end{lem}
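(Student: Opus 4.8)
The plan is to show that $L^2_{\mathcal{U}}(Z)$ and $L_A^2(Z)$ are orthogonal and together span $L^2(Z)$. The key structural observation is that the two subspaces are built from complementary pieces of data attached to the covering $\mathcal{U}$: the subspace $L^2_{\mathcal{U}}(Z)$ consists of exactly those functions whose integral over each $U\in\mathcal{U}$ vanishes, while $L_A^2(Z)$ is generated by the functions $e(x)$, which are constant on each $U\in\mathcal{U}$ (being of the form $\sum_V \mu(V)^{-1}e_V\,\Omega(x\in V)$). So the natural approach is to compare $L^2(Z)$ against the finite-dimensional space of functions that are constant on each patch of $\mathcal{U}$.

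\emph{First} I would introduce the space
\[
L^2_{\mathrm{const}}(Z)=\Bigl\{\,\textstyle\sum_{V\in\mathcal{U}}c_V\,\Omega(x\in V)\ :\ c_V\in\mathds{C}\,\Bigr\},
\]
the space of functions constant on each $U\in\mathcal{U}$, which is $n$-dimensional with orthogonal basis $\{\Omega(x\in V)\}_{V\in\mathcal{U}}$. \emph{Second} I would prove the elementary orthogonal decomposition $L^2(Z)=L^2_{\mathcal{U}}(Z)\oplus L^2_{\mathrm{const}}(Z)$: given $f\in L^2(Z)$, set $c_V=\mu(V)^{-1}\int_V f\,dx$ and let $g=\sum_V c_V\Omega(x\in V)$ be the $\mathcal{U}$-averaged function; then $f-g$ integrates to zero on each $U$, hence lies in $L^2_{\mathcal{U}}(Z)$, while $g\in L^2_{\mathrm{const}}(Z)$. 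Orthogonality is immediate, since for $h\in L^2_{\mathcal{U}}(Z)$ we have $\langle h,\Omega(\cdot\in V)\rangle=\int_V h\,dx=0$. This pairing also shows the decomposition is direct.

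\emph{Third}, and this is where the real content lies, I would identify $L_A^2(Z)$ with $L^2_{\mathrm{const}}(Z)$. Via the map $b$ from Lemma \ref{dict}, the functions $e(x)$ coming from eigenvectors $e=(e_V)$ of $B^\alpha$ are exactly the images $b(e)$, and these manifestly lie in $L^2_{\mathrm{const}}(Z)$. The point to establish is that they span all of it: since $B^\alpha$ is symmetric with real entries (as $A$ is symmetric and $C^\alpha_{UV}$ is symmetric in $U,V$ by Lemma \ref{absConst}), it is diagonalisable with a full orthonormal eigenbasis $\{e^{(1)},\dots,e^{(n)}\}$ of $\mathds{C}^n$. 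Applying the \emph{linear} map $b$ to this basis yields $n$ functions $e^{(i)}(x)$ spanning the $n$-dimensional space $L^2_{\mathrm{const}}(Z)$, so $L_A^2(Z)=L^2_{\mathrm{const}}(Z)$. Substituting into the decomposition of the second step gives the claim.

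\emph{The main obstacle} I anticipate is the identification $L_A^2(Z)=L^2_{\mathrm{const}}(Z)$, specifically verifying that the eigenvectors of $B^\alpha$ genuinely span all of $\mathds{C}^n$ and that $b$ restricted to $\mathds{C}^n$ is injective with image exactly $L^2_{\mathrm{const}}(Z)$. Spanning follows from symmetry of $B^\alpha$ via the spectral theorem; injectivity of $b$ on $\mathds{C}^n$ follows because the $\Omega(\cdot\in V)$ have disjoint supports, so $b(e)=0$ forces every $e_V=0$. Once these two facts are in place the decomposition and orthogonality are routine, and the normalisation $\|e(x)\|_2=1$ assumed in Theorem \ref{TVEigen} simply selects an orthonormal eigenbasis.
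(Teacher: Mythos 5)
Your proof is correct and follows essentially the same route as the paper: the paper likewise splits $f\in L^2(Z)$ into its patch-average part $h=\sum_{U\in\mathcal{U}}\mu(U)^{-1}\int_U f(x)\,dx\;\Omega(x\in U)$ plus a remainder lying in $L^2_{\mathcal{U}}(Z)$, and rests on identifying $L_A^2(Z)$ with the functions constant on each $U\in\mathcal{U}$. The only difference is one of explicitness, not of method: the paper calls that identification ``clear by definition,'' while you supply the justification it leaves tacit (the real symmetric matrix has a full eigenbasis by the spectral theorem, and $b$ is injective with image the constant-on-patches space because the indicators $\Omega(\cdot\in V)$ have disjoint supports).
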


\begin{proof}
Let $E$ be the orthogonal complement of $L_{\mathcal{U}}^2(Z)$ in $L^2(Z)$.

\smallskip
Observe that 
\[
L_A^2(Z)=\mathset{f\in L^2(Z)\colon \forall\;U\in\mathcal{U}\colon f|_U=\text{const.}}\subseteq E
\]
Namely, the first equality is clear by definition of $L_A^2(Z)$. Now,
observe that if $f$ is constant on each $U\in\mathcal{U}$, then $f$ is orthogonal to each $\mathcal{C}$-wavelet $\psi_{j,W}$;
in the case $W\subset U$, use Lemma \ref{waveletInt1}.

\smallskip
On the other hand, let $f\in E$. 
Then 
\[
h=\sum\limits_{U\in\mathcal{U}}\mu(U)^{-1}\int_U f(x)\,dx\,\Omega(x\in U)
\in L^2(Z)
\]
is constant on each $U$, because $f\in L^1(Z)$. 
Now,
\[
g=f-h\in L^2_{\mathcal{U}}(Z)
\]
Then it must be that $g=0$.
Hence, $f=h$, and this function is 
in $L_A^2(Z)$.
This proves the asserted orthogonal decomposition.
\end{proof}

\begin{exa}\label{exaNonComp}
Let $K=\mathds{Q}_p$, $U=p\mathds{Z}_p$, $V=1+p\mathds{Z}_p$, and $Z=U\cup V$ with the covering $\mathcal{U}=\mathset{U,V}$. Let
\[
A=\begin{pmatrix}
0&1\\1&0
\end{pmatrix}
\]
We have
\[
A_{\mathcal{U}}^\alpha
=\absolute{x-y}_K^\alpha\Omega(x\in U)\Omega(y\in V)
+\absolute{x-y}_K^\alpha\Omega(x\in V)\Omega(y\in U)
\]
Let $W_n=p^{2n}\mathds{Z}_p$ with $n\ge 1$. Then the
wavelet
\[
\psi_{W_n}(x)=p^{n}\chi(p^{-2n-1}x)\Omega(x\in W_n)
\]
is an eigenfunction of $\mathcal{D}_{\mathcal{U}}^\alpha$. Namely, we have
\begin{align*}
\mathcal{A}_{\mathcal{U}}^\alpha\psi_{W_n}(x)
&=\int_{W_n}\absolute{x-y}_K^\alpha \psi_{W_n}(y)\,dy\,\Omega(x\in V)
\\
&=C_{UV}^\alpha\int_{W_n}\psi_{W_n}(y)\,dy\,\Omega(x\in V)
=0
\end{align*}
because the integral vanishes by Lemma \ref{waveletInt1}.
And we have
\begin{align*}
\int_ZA_{\mathcal{U}}^\alpha(x,y)&\,dy\,\psi_{W_n}(x)
=\int_V\absolute{x-y}_K^\alpha\,dy\,\psi_{W_n}(x)
\\
&=\mu(V)\psi_{W_n}(x)
\end{align*}
Hence, $\psi_{W_n}(x)$ with $n\ge 1$ is indeed an eigenfunction for eigenvalue
$-\mu(V)$.  
\end{exa}

Example \ref{exaNonComp} already shows that, in general, $\mathcal{D}_{A,\mathcal{U}}^\alpha$ cannot be expected to be a compact operator on $L^2(Z)$, not even for $\alpha=0$. Theorem \ref{TVEigen} only confirms this.

\smallskip
We will now further decompose $L_A^2(Z)$ by using the graph structure on $\mathcal{U}$ given by the $\mathcal{U}$-matrix $A$. Let $G$ be the graph whose vertex set is $\mathcal{U}$, and such that $A$ is an adjacency matrix of $G$. Define the following $\mathds{C}$-vector spaces:
\begin{align*}
A(G,\mathds{C})&:=\mathset{\beta\colon E(G)\to\mathds{C}}
\\
H(G,\mathds{C})&=\mathset{f\colon V(G)\to\mathds{C}}
\end{align*}
where $V(G)=\mathcal{U}$ and $E(G)$ are the vertex and edge sets of $G$, respectively. From graph theory, there is a well-known exact sequence
\begin{align}\label{exactSeq}
\xymatrix{
0\ar[r]& c(G,\mathds{C})\ar[r]& A(G,\mathds{C})\ar[r]^d
& H(G,\mathds{C})\ar[r]^\phi &\mathds{C}^{b_0(G)}\ar[r]& 0
}
\end{align}
where $b_0(G)$ is the $0$-th Betti number of $G$, and
$c(G,\mathds{C})$ is the kernel of the linear map $d$.
The map $d$ is defined as follows:
\[
d(\beta)\colon v\mapsto\sum\limits_{e\in E(G)\atop e\dashv v}\beta(e)
\]
where $e\dashv v$ means that edge $e$ is attached to vertex $v$.
The map $\phi$ is defined as follows:
\[
f\mapsto \left(\sum_{v\in C)}f(v)\right)_{C\in C(G)}
\]
where $C(G)$ is the set of all connected components of $G$.

\smallskip
Furthermore, there is a linear map
\[
\sigma\colon L^2(Z)\to H(G,\mathds{C}),
f\mapsto\left(U\mapsto \int_Uf(x)\,dx\right)
\]
\begin{lem}\label{immediateL2s}
The map $\sigma$ is surjective, and
it holds true that
\[
\ker(\phi\circ\sigma)=
L_{C(G)}^2(Z)
\]
where 
\[
L^2_{C(G)}(Z)=\mathset{f\in L^2(Z)\colon \forall C\in C(G)\colon \int_{C}f(x)\,dx=0}
\]
and, further,
\[
\ker(\sigma) = L^2_{\mathcal{U}}(Z)
\]
\end{lem}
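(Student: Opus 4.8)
The plan is to dispatch the three claims in increasing order of bookkeeping, throughout identifying $H(G,\mathds{C})$ with $\mathds{C}^{\mathcal{U}}=\mathds{C}^n$ and exploiting that the pieces of $\mathcal{U}$, being a verticial covering, are pairwise disjoint of strictly positive finite measure.

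For surjectivity of $\sigma$ I would observe that the map $b\colon\mathds{C}^n\to C(Z)$ defined just before Lemma \ref{nice} already furnishes a section. Indeed, for $e=(e_V)\in H(G,\mathds{C})$ the function $b(e)(x)=\sum_{V\in\mathcal{U}}\mu(V)^{-1}e_V\,\Omega(x\in V)$ is locally constant, hence lies in $C(Z)\subseteq L^2(Z)$ since $Z$ is compact and each piece has finite measure; and because the pieces are disjoint, $\int_U b(e)(x)\,dx=\mu(U)^{-1}e_U\,\mu(U)=e_U$, so that $\sigma\circ b=\id$. This exhibits an explicit right inverse and proves $\sigma$ surjective. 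The identity $\ker(\sigma)=L^2_{\mathcal{U}}(Z)$ is then immediate from the definitions: $\sigma(f)=\bigl(\int_Uf\,dx\bigr)_{U\in\mathcal{U}}$ vanishes exactly when $\int_Uf\,dx=0$ for every $U\in\mathcal{U}$, which is precisely the defining condition of $L^2_{\mathcal{U}}(Z)$.

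For the remaining identity I would compute $\phi\circ\sigma$ directly. Feeding the vertex function $\sigma(f)\colon U\mapsto\int_Uf\,dx$ into the definition of $\phi$ from the exact sequence \eqref{exactSeq} gives
\[
(\phi\circ\sigma)(f)=\Bigl(\sum_{U\in C}\int_Uf(x)\,dx\Bigr)_{C\in C(G)}
=\Bigl(\int_Cf(x)\,dx\Bigr)_{C\in C(G)},
\]
where in the last step I use that a connected component $C$, viewed as a vertex set, is a collection of pairwise disjoint pieces $U$ whose union is the subset of $Z$ also denoted $C$, so that additivity of the integral over disjoint sets collapses the finite sum into a single integral. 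Hence $(\phi\circ\sigma)(f)=0$ if and only if $\int_Cf\,dx=0$ for every component $C\in C(G)$, which is exactly membership in $L^2_{C(G)}(Z)$, proving $\ker(\phi\circ\sigma)=L^2_{C(G)}(Z)$.

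The only delicate points here are notational rather than mathematical, so I do not anticipate a real obstacle. One must (i) insist that every piece $U\in\mathcal{U}$ carries strictly positive measure so that $b$ is well defined and serves as a section, which is guaranteed because $\mathcal{U}$ is verticial and its pieces are intersections of spheres of positive radius; and (ii) keep straight the deliberate double use of the symbol $C$ for both a connected component of $G$ and the corresponding union $\bigcup_{U\in C}U\subseteq Z$, since it is exactly this identification that legitimises replacing $\sum_{U\in C}\int_U$ by $\int_C$ in the computation above.
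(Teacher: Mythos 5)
Your proof is correct and matches the paper, whose entire proof of this lemma is ``This is immediate'': your argument is precisely the direct verification being alluded to --- the section $b$ witnessing surjectivity, the definitional computation of $\ker(\sigma)$, and the collapse of $\sum_{U\in C}\int_U$ into $\int_C$ by disjointness of the verticial covering. Your two cautionary remarks (positive measure of each $U$, and the dual use of $C$ for a component and the corresponding subset of $Z$) are exactly the implicit conventions the paper relies on.
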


\begin{proof}
This is immediate.
\end{proof}

\begin{cor}
It holds true that
\[
L^2_{C(G)}(Z)/L^2_{\mathcal{U}}(Z)\cong d(A(G,\mathds{C}))
\]
and
\[
L^2(Z)/L^2_{\mathcal{U}}(Z)\cong H(G,\mathds{C})
\]
In particular, it holds true that
\begin{align*}
L^2(Z) &\cong L_{\mathcal{U}}\oplus d(A(G,\mathds{C}))\oplus \mathds{C}^{b_0(G)}
\\
&\cong L_{\mathcal{U}}\oplus A(G,\mathds{C})/c(G,\mathds{C})\oplus 
\mathds{C}^{b_0(G)}
\end{align*}
\end{cor}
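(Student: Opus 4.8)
The plan is to deduce both displayed isomorphisms from Lemma~\ref{immediateL2s} and the exactness of \eqref{exactSeq} by nothing more than the first isomorphism theorem, and then to assemble the direct sum from the two-step filtration $L^2_{\mathcal{U}}(Z)\subseteq L^2_{C(G)}(Z)\subseteq L^2(Z)$. The second isomorphism is immediate: by Lemma~\ref{immediateL2s} the map $\sigma$ is surjective with $\ker(\sigma)=L^2_{\mathcal{U}}(Z)$, so $L^2(Z)/L^2_{\mathcal{U}}(Z)\cong\operatorname{im}(\sigma)=H(G,\mathds{C})$.

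For the first isomorphism I would restrict $\sigma$ to $L^2_{C(G)}(Z)$. Since $L^2_{C(G)}(Z)=\ker(\phi\circ\sigma)$, every such $f$ satisfies $\phi(\sigma(f))=0$, so $\sigma$ maps $L^2_{C(G)}(Z)$ into $\ker(\phi)$, which equals $\operatorname{im}(d)=d(A(G,\mathds{C}))$ by exactness of \eqref{exactSeq}. The restriction is onto $\ker(\phi)$, because any $h\in\ker(\phi)$ equals $\sigma(f)$ for some $f$ (surjectivity of $\sigma$), and then $\phi(\sigma(f))=\phi(h)=0$ places $f$ in $L^2_{C(G)}(Z)$. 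As $L^2_{\mathcal{U}}(Z)=\ker(\sigma)\subseteq L^2_{C(G)}(Z)$, the kernel of the restriction is exactly $L^2_{\mathcal{U}}(Z)$, so the first isomorphism theorem gives $L^2_{C(G)}(Z)/L^2_{\mathcal{U}}(Z)\cong d(A(G,\mathds{C}))$.

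To obtain the direct sum I would first identify the top quotient $L^2(Z)/L^2_{C(G)}(Z)\cong\mathds{C}^{b_0(G)}$ by applying the first isomorphism theorem to $\phi\circ\sigma$, a composite of surjections with kernel $L^2_{C(G)}(Z)$. The filtration thus has successive quotients $d(A(G,\mathds{C}))$ and $\mathds{C}^{b_0(G)}$. I would then realise the splitting inside the orthogonal decomposition $L^2(Z)=L^2_{\mathcal{U}}(Z)\oplus L^2_A(Z)$ of Lemma~\ref{decompU}: here $L^2_A(Z)$ is finite-dimensional and carried isomorphically onto $H(G,\mathds{C})$ by $\sigma$, under which $L^2_{C(G)}(Z)\cap L^2_A(Z)$ corresponds to $\ker(\phi)=d(A(G,\mathds{C}))$ and its orthogonal complement in $L^2_A(Z)$ to a complement isomorphic to $\mathds{C}^{b_0(G)}$. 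This yields the first line (with $L_{\mathcal{U}}$ abbreviating $L^2_{\mathcal{U}}(Z)$); the second line follows from $d(A(G,\mathds{C}))=\operatorname{im}(d)\cong A(G,\mathds{C})/\ker(d)=A(G,\mathds{C})/c(G,\mathds{C})$, since $c(G,\mathds{C})=\ker(d)$ by \eqref{exactSeq}.

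The one point needing genuine care --- and the main obstacle --- is upgrading the filtration with prescribed subquotients to an honest \emph{orthogonal} Hilbert-space decomposition of $L^2(Z)$, rather than a merely algebraic splitting of an exact sequence. This is harmless precisely because the two quotients above $L^2_{\mathcal{U}}(Z)$ are finite-dimensional, so the entire splitting takes place inside the finite-dimensional space $L^2_A(Z)$ supplied by Lemma~\ref{decompU}, where algebraic and orthogonal complements coincide; everything else is routine bookkeeping of kernels and images.
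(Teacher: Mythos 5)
Your proposal is correct and follows essentially the same route as the paper, which deduces the corollary from the exact sequence \eqref{exactSeq} and Lemma~\ref{immediateL2s} via exactly the kernel--image bookkeeping you carry out (first isomorphism theorem applied to $\sigma$ and $\phi\circ\sigma$, plus $\ker(\phi)=\operatorname{im}(d)$ and $\ker(d)=c(G,\mathds{C})$). Your closing observation --- that the splitting can be realised orthogonally inside the finite-dimensional summand $L^2_A(Z)$ of Lemma~\ref{decompU} --- is a detail the paper leaves implicit, but it is consistent with, and a sound sharpening of, the paper's one-line argument.
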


\begin{proof}
This follows from the exact sequence (\ref{exactSeq}) and from 
Lemma \ref{immediateL2s}. In particular, there is a decomposition
\[
H(G,\mathds{C})\cong A(G,\mathds{C})/c(G,\mathds{C})\oplus \mathds{C}^{b_0(G)}
\]
from the exact sequence.
\end{proof}

\begin{rem}
Notice that the direct summand $\mathds{C}^{b_0(G)}$ equals the eigenspace of $\mathcal{D}_{\mathcal{U}}^\alpha$ for eigenvalue zero.
\end{rem}

\subsection{Cauchy problem for the heat equation on $Z$}

Let $Z\subset K$ be a compact clopen set, and $\mathcal{U}$ a verticial covering of $Z$ by rational affinoids.

\smallskip
Following
\cite[Ch.\ 4.2]{EK1986}, we consider the Banach space
$C(Z)$ of continuous real-valued functions on
$Z$ w.r.t.\ the $L_\infty$-norm $\norm{\cdot}_\infty$.

\smallskip
An operator is said to  be \emph{positive}, if it maps non-negative functions to  non-negative functions.

\smallskip
A semigroup $T(t)$ on $C(Z)$ is \emph{positive},
if for each $t\ge 0$ the operator $T(t)$ is positive.

\smallskip
An operator on $C(Z)$ is said to satisfy the \emph{positive maximum principle}, if for all 
$f\in\dom(A)$, $x_0\in\Omega$, we have
\[
\sup\limits_{x\in\Omega}f(x)=f(x_0)\ge0\quad
\Rightarrow
\quad
Af(x_0)\le 0
\]

\begin{thm}[Yosida-Hille]\label{Yosida-Hille}
The closure $\bar{\mathcal{A}}$ of a linear operator $\mathcal{A}$ on $C(Z)$
with doubly invariant kernel 
is single-valued and generates a strongly continuous positive, contraction semigroup  (aka \emph{Feller semigroup}) $T(t)$ on $C(Z)$, if and only if
\begin{enumerate}
\item $\dom(\mathcal{A})$ is dense in $C(Z)$.
\item $\mathcal{A}$ satisfies the positive maximum principle.
\item The resolvent set $\Res(\lambda-\mathcal{A})$ is dense in $C(Z)$ for some $\lambda>0$. 
\end{enumerate}
\end{thm}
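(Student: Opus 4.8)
The plan is to recognise this as the Hille--Yosida--Ray characterisation of Feller generators and to follow \cite[Ch.\ 4.2]{EK1986}, reducing the statement to the abstract Hille--Yosida theorem together with the positivity criterion furnished by the positive maximum principle. The doubly invariant kernel assumption enters only insofar as it guarantees that $\mathcal{A}$ genuinely maps (a dense subspace of) $C(Z)$ into $C(Z)$; it does not affect the closure and resolvent arguments, which are purely functional-analytic. I would treat the two implications separately, reading condition (3) as density of the \emph{range} of $\lambda-\mathcal{A}$.

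For the forward direction I would assume $\bar{\mathcal{A}}$ generates a Feller semigroup $T(t)$ and verify (1)--(3). Density of $\dom(\mathcal{A})$ is the standard fact that the domain of the generator of a strongly continuous semigroup is dense. For the positive maximum principle, if $f\in\dom(\mathcal{A})$ attains a non-negative maximum at $x_0$, then positivity and contractivity give $T(t)f(x_0)\le\norm{T(t)f}_\infty\le\norm{f}_\infty=f(x_0)$, so the difference quotient $t^{-1}(T(t)f(x_0)-f(x_0))$ is non-positive and its limit $\mathcal{A}f(x_0)\le 0$. Condition (3) follows from writing the resolvent as the Laplace transform $(\lambda-\bar{\mathcal{A}})^{-1}=\int_0^\infty e^{-\lambda t}T(t)\,dt$, a bounded operator whose image is all of $\dom(\bar{\mathcal{A}})$, so in particular the range of $\lambda-\mathcal{A}$ is dense.

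For the converse, the key observation is that the positive maximum principle forces \emph{dissipativity}: evaluating $\lambda-\mathcal{A}$ at a point where $\pm f$ attains its supremum norm yields $\norm{(\lambda-\mathcal{A})f}_\infty\ge\lambda\norm{f}_\infty$ for every $\lambda>0$. Combined with the density-of-range hypothesis (3), this shows that $\lambda-\bar{\mathcal{A}}$ is injective with bounded inverse of norm at most $\lambda^{-1}$, so every $\lambda>0$ lies in the resolvent set of $\bar{\mathcal{A}}$; in particular the closure is single-valued. I would then build $T(t)$ by the Yosida approximation $\mathcal{A}_\lambda=\lambda\bar{\mathcal{A}}(\lambda-\bar{\mathcal{A}})^{-1}$, check that $e^{t\mathcal{A}_\lambda}$ is a contraction semigroup, and pass to the limit $\lambda\to\infty$ to recover a strongly continuous contraction semigroup with generator $\bar{\mathcal{A}}$. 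Positivity of $T(t)$ is inherited from positivity of the resolvent $(\lambda-\bar{\mathcal{A}})^{-1}$, which is precisely what the positive maximum principle supplies via the dissipativity estimate.

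The hard part will be the convergence of the Yosida approximants and the identification of the limit semigroup's generator with $\bar{\mathcal{A}}$, together with the verification that the limit is positive rather than merely contractive; this is the technical core of the abstract theorem. Since all of these steps are classical, I expect the actual proof to amount to citing the Hille--Yosida theorem in the form presented in \cite[Ch.\ 4.2]{EK1986} and remarking that the doubly invariant kernel hypothesis only guarantees well-definedness on $C(Z)$, so that the abstract machinery applies verbatim.
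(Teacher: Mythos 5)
Your proposal is correct and takes essentially the same route as the paper: the paper's entire proof is the citation to \cite[Thm.\ 4.2.2]{EK1986}, exactly as you predicted in your last paragraph, and your reading of condition (3) as density of the \emph{range} of $\lambda-\mathcal{A}$ is the right interpretation of the theorem's misstated ``resolvent set'' condition. One small slip in your forward direction: the chain $T(t)f(x_0)\le\norm{T(t)f}_\infty\le\norm{f}_\infty=f(x_0)$ fails when $-\inf f>\sup f$, since then $\norm{f}_\infty>f(x_0)$; the standard repair uses positivity directly, namely $T(t)f\le T(t)f^{+}$ together with $\norm{T(t)f^{+}}_\infty\le\norm{f^{+}}_\infty=f(x_0)$.
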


\begin{proof}
Cf.\ \cite[Thm.\ 4.2.2]{EK1986}.
\end{proof}

We take the following definition from \cite{Zuniga2020} in our situation:

\begin{dfn}
A family of bounded linear operators $P_t \colon C(Z) \to C (Z)$ is called a \emph{Feller semigroup},
if
\begin{enumerate}
\item $P_{s+t}=P_sP_t$, $P_0=I$ (semigroup).
\item for any $h\in C(Z)$, it holds true that $\lim\limits_{t\to 0}\norm{P_th-h}_\infty=0$ (strongly continuous).
\item For $h\in C(Z)$ and $t\ge0$, we have: $0\le h\le 1$ $\Rightarrow$ $0\le P_th\le 1$ (positive contraction).
\end{enumerate}
\end{dfn}

Thus, the criteria of Yosida-Hille (Theorem \ref{Yosida-Hille})
imply that $\mathcal{A}$ has a closed extension which generates a Feller semigroup.

\smallskip
Let $A$ be a symmetric  $\mathcal{U}$-matrix whose entries are non-negative, and
with $A_{UU}=0$.

\begin{lem}
Let $\epsilon>0$. Then $\epsilon\mathcal{D}_{A,\mathcal{U}}^\alpha$ with $\alpha\in\mathds{R}$ generates a 
Feller 
semigroup $\exp\left(t\epsilon\mathcal{D}_{A,\mathcal{U}}^\alpha\right)$ on
$C(Z)$.
\end{lem}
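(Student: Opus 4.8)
The goal is to show $\epsilon\mathcal{D}_{A,\mathcal{U}}^\alpha$ generates a Feller semigroup on $C(Z)$. The natural route is to verify the three hypotheses of the Yosida-Hille theorem (Theorem~\ref{Yosida-Hille}) for the operator $\mathcal{A}=\epsilon\mathcal{D}_{A,\mathcal{U}}^\alpha$, since scaling by a positive constant $\epsilon$ changes none of the qualitative properties we need. The plan is to check density of the domain, the positive maximum principle, and density of the resolvent range in turn, relying on the structural results already established: Corollary~\ref{bounded} (boundedness on $L^2(Z)$), Lemma~\ref{self-adjoint} (self-adjointness), and the explicit diagonalisation of Theorem~\ref{TVEigen}.

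First I would address the domain. Since $A$ is non-negative symmetric with $A_{UU}=0$ and $\alpha\in\mathds{R}$, Corollary~\ref{bounded} shows $\mathcal{D}_{A,\mathcal{U}}^\alpha$ is bounded, and the same estimate (Lemma~\ref{boundedTest}) shows it maps continuous functions to continuous functions because the kernel $A_{\mathcal{U}}^\alpha(x,y)$ is locally constant on the verticial covering (Lemma~\ref{absConst}); hence $\mathcal{A}$ is in fact everywhere-defined and bounded on $C(Z)$, so $\dom(\mathcal{A})=C(Z)$ is trivially dense, giving hypothesis (1). For the positive maximum principle (hypothesis (2)), I would argue directly from the integral form: if $f$ attains its nonnegative supremum at $x_0\in U$, then in
\[
\epsilon\mathcal{D}_{A,\mathcal{U}}^\alpha f(x_0)
=\epsilon\int_Z A_{\mathcal{U}}^\alpha(x_0,y)\bigl(f(y)-f(x_0)\bigr)\,dy
\]
every factor $A_{\mathcal{U}}^\alpha(x_0,y)=\sum_V A_{UV}\,C_{UV}^\alpha\,\Omega(y\in V)$ is non-negative (here using $A_{UV}\ge0$ and $C_{UV}^\alpha=|x_0-y|_K^\alpha\ge0$), while $f(y)-f(x_0)\le0$ for all $y$; thus the integrand is $\le0$ and the integral is $\le0$, which is exactly the required conclusion.

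For hypothesis (3), the cleanest approach is to exploit the explicit spectral decomposition from Theorem~\ref{TVEigen}: the $\mathcal{C}$-wavelets and the graph-Laplacian eigenfunctions form an orthonormal eigenbasis, with all eigenvalues of $\mathcal{D}_{A,\mathcal{U}}^\alpha$ of the form $-\deg_{A,\alpha}^{Z}(U)\le0$ for the wavelets and real (indeed non-positive, being Laplacian eigenvalues of a non-negative symmetric matrix) for the graph part. Consequently the spectrum of $\mathcal{A}=\epsilon\mathcal{D}_{A,\mathcal{U}}^\alpha$ lies in $(-\infty,0]$, so for any $\lambda>0$ the resolvent $(\lambda-\mathcal{A})^{-1}$ exists as a bounded operator and $\lambda-\mathcal{A}$ is surjective, giving a range that is all of the relevant space and in particular dense. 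The main obstacle I anticipate is a functional-analytic mismatch rather than a computational one: the diagonalisation in Theorem~\ref{TVEigen} lives in $L^2(Z)$, whereas the Yosida-Hille theorem is stated for $C(Z)$ with the sup-norm, so I would need to check that $\lambda-\mathcal{A}$ is invertible on $C(Z)$ specifically. This is manageable because $\mathcal{A}$ is a bounded operator on $C(Z)$ (the kernel being a finite locally-constant sum), so the Neumann series for $(\lambda-\mathcal{A})^{-1}=\lambda^{-1}\sum_{k\ge0}(\mathcal{A}/\lambda)^k$ converges in operator norm once $\lambda>\norm{\mathcal{A}}$, yielding a genuine $C(Z)$-resolvent for large $\lambda$; together with the positive maximum principle this covers all $\lambda>0$ and closes the argument, after which Yosida-Hille delivers the Feller semigroup $\exp(t\epsilon\mathcal{D}_{A,\mathcal{U}}^\alpha)$.
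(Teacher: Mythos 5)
Your proof is correct and follows the same overall skeleton as the paper's: both verify the three hypotheses of Theorem \ref{Yosida-Hille} for the bounded operator $\epsilon\mathcal{D}_{A,\mathcal{U}}^\alpha$ (boundedness via Corollary \ref{bounded}, continuity of the output via the local constancy of the kernel from Lemma \ref{absConst}), treat conditions (1) and (2) as essentially immediate, and reduce everything to solvability of the resolvent equation. The one place you genuinely diverge is condition (3). The paper, following Z\'u\~niga-Galindo, splits off the multiplication part $g(x)=\int_Z A_{\mathcal{U}}^\alpha(x,y)\,dy$, rewrites $(\eta-\epsilon\mathcal{D}_{A,\mathcal{U}}^\alpha)u=h$ as $(I-T)u=h/(\eta+\epsilon g)$ with $Tu(x)=\epsilon\int_Z u(y)A_{\mathcal{U}}^\alpha(x,y)/(\eta+\epsilon g(x))\,dy$, and inverts $I-T$ by a Neumann series once $\eta>\epsilon\norm{g}_\infty$; you instead run the Neumann series directly on $(\lambda-\mathcal{A})^{-1}=\lambda^{-1}\sum_{k\ge0}(\mathcal{A}/\lambda)^k$ for $\lambda>\norm{\mathcal{A}}$. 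Both are legitimate here because the operator is bounded on $C(Z)$; the paper's weighted rewriting buys a better threshold ($\eta>\epsilon\norm{g}_\infty$ rather than $\lambda>\norm{\mathcal{A}}$, which can be as large as $2\epsilon\norm{g}_\infty$) and, more importantly, is the form of the argument that survives in settings where only $g$ is known to be bounded, while your version is shorter and suffices since Theorem \ref{Yosida-Hille} only asks for density of the range for \emph{some} $\lambda>0$ -- your closing remark that the positive maximum principle extends this to all $\lambda>0$ is true but not needed. Two further small points: your initial idea of reading off the spectrum from the $L^2$-diagonalisation of Theorem \ref{TVEigen} would indeed not prove the $C(Z)$-resolvent condition, and you correctly abandoned it, so no harm done; and you pass a little quickly over the final identification of the Yosida--Hille semigroup with $\exp\left(t\epsilon\mathcal{D}_{A,\mathcal{U}}^\alpha\right)$, which the paper justifies by noting that both semigroups have the same bounded infinitesimal generator (citing Pazy) -- your observation that the closure of a bounded everywhere-defined operator is the operator itself implicitly covers this, but it deserves a sentence.
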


\begin{proof}
First observe that by Lemma \ref{bounded}, $\mathcal{D}_{A,\mathcal{U}}^\alpha$ is bounded, hence $\epsilon \mathcal{D}_{A,\mathcal{U}}^\alpha$ is a closed linear operator.

\smallskip
We now go through the steps in the proof of \cite[Lemma 4.1]{Zuniga2020}.

\smallskip
The conditions 1.\ and 2.\ of Theorem \ref{Yosida-Hille} are straightforward. Condition 3.\ means that there exists $\eta>0$ such that for any $h\in C(Z)$ the equation
\begin{align}\label{eigenequation}
\left(\eta-\epsilon\mathcal{D}_{A,\mathcal{U}}^\alpha\right)u=h
\end{align}
has a solution $u\in C(Z)$. Let
\[
g(x)=\int_{Z}A^\alpha_{\mathcal{U}}(x,y)\,dy
\]
Observe that
$g\in C(Z)$.
Hence, $g$ is bounded, because $Z$ is compact.
This means that
there exists some $C>0$ such that
\[
\norm{g}_\infty\le C
\]
Now, rewrite equation (\ref{eigenequation}) in this way:
\[
u(x)-\epsilon\int_{Z}u(y)
\frac{A^\alpha_{\mathcal{U}}(x,y)}{\eta+\epsilon g(x)}\,dy
=\frac{h(x)}{\eta+\epsilon g(x)}
\]
Observe that the right hand side is in $C(Z)$.
Now, the operator
\[
T\colon C(Z)\to C(Z),\;
u(x)\mapsto \epsilon\int_{Z}u(y)\frac{A^\alpha_{\mathcal{U}}(x,y)}{\eta+\epsilon g(x)}\,dy
\]
satisfies
\[
\norm{T}\le\frac{\epsilon C}{\eta}
\]
By taking $\eta>\epsilon C$, the operator $I-T$ has an inverse in
$C(Z)$. And notice that $\eta$ is independent of $h$.

\smallskip
It follows that the closed operator $\mathcal{D}_{A,\mathcal{U}}^\alpha$ generates a semigroup $Q_t$ ($t\ge 0$) satisfying the conditions of Yosida-Hille (Theorem \ref{Yosida-Hille}). On the other hand,
since $\epsilon \mathcal{D}_{A,\mathcal{U}}^\alpha$ is a bounded linear operator on a Banach space, it follows that 
\[
\exp\left(t\epsilon\mathcal{D}_{A,\mathcal{U}}^\alpha\right),
\quad t\ge 0
\]
is a uniformly continuos semigroup, and by using that the infinitesimal generators of $\exp\left(t\epsilon\mathcal{D}_{A,\mathcal{U}}^\alpha\right)$ and $Q_t$  (with $t\ge0$) agree, we have
\[
Q_t=\exp\left(t\epsilon\mathcal{D}_{A,\mathcal{U}}^\alpha\right)
\]
for $t\ge 0$, cf.\ e.g.\ \cite[Thm.\ 1.2 and Thm.\ 1.3]{Pazy1983}
\end{proof}

We are now going to look at the following Cauchy problem:

\begin{task}[Cauchy Problem]\label{CPZ}
Find $h(t,x)\in C^1\left((0,\infty),C(Z)\right)$ such that
\begin{align}\label{heateqZ}
\left(\frac{\partial}{\partial t}-\epsilon\mathcal{D}_{A,\mathcal{U}}^\alpha\right) h(t,x)=0
\end{align}
for $t\ge0$, $x\in\Omega$ which satisfies the initial condition
\[
h(0,x)=h_0(x)
\]
where $h_0\in C(Z)$ is fixed.
\end{task}

As the corresponding semigroup is Feller, it describes a 
$\pi$-adic heat equation on 
$Z$. 
Consequently
there is a $\pi$-adic diﬀusion process in $Z$ attached to the diﬀerential equation (\ref{heateqZ}).

\begin{thm}
There exists a probability measure $p_t(x,\cdot)$ with $t\ge0$, $x\in\Omega$ on the Borel $\sigma$-algebra of $\Omega$ such that the Cauchy Problem (Task \ref{CPZ}) has a unique solution of the form
\[
h(t,x)=\int_{Z}h_0(y)p_t(x,dy)
\]
In addition, $p_t(x,\cdot)$ is the transition function of a Markov process whose paths are right continuous and have no discontinuities other than jumps.
\end{thm}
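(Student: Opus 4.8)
The plan is to read everything off from the Feller semigroup $P_t=\exp\!\left(t\epsilon\mathcal{D}_{A,\mathcal{U}}^\alpha\right)$ produced in the preceding Lemma, so that no new analytic construction is required. First I would define the candidate solution by $h(t,x)=P_th_0(x)$. Since $\epsilon\mathcal{D}_{A,\mathcal{U}}^\alpha$ is a \emph{bounded} operator on $C(Z)$ (Corollary~\ref{bounded}), the map $t\mapsto P_t$ is norm-continuous and norm-differentiable with $\frac{d}{dt}P_t=\epsilon\mathcal{D}_{A,\mathcal{U}}^\alpha P_t$; hence $h\in C^1\big((0,\infty),C(Z)\big)$, equation (\ref{heateqZ}) holds, and $h(0,x)=h_0(x)$, which settles existence. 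Uniqueness is the standard Banach-space argument for a bounded generator: if $h_1,h_2$ both solve Task~\ref{CPZ}, their difference $w=h_1-h_2$ satisfies $w(0)=0$ and $\partial_tw=\epsilon\mathcal{D}_{A,\mathcal{U}}^\alpha w$, so $\frac{d}{dt}\norm{w(t)}_\infty\le\epsilon\norm{\mathcal{D}_{A,\mathcal{U}}^\alpha}\,\norm{w(t)}_\infty$ and Gronwall forces $w\equiv0$.

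The probabilistic representation then comes from the Riesz representation theorem. For fixed $t\ge0$ and $x\in Z$, the functional $h_0\mapsto(P_th_0)(x)$ is positive and bounded on $C(Z)$, hence is given by integration against a unique positive Borel measure $p_t(x,\cdot)$, yielding $h(t,x)=\int_Zh_0(y)\,p_t(x,dy)$. The semigroup identity $P_{s+t}=P_sP_t$ translates into the Chapman--Kolmogorov equation for $p_t(x,\cdot)$, and the strong continuity and positivity of $P_t$ give the remaining measurability and regularity, so that $p_t(x,\cdot)$ is a transition function.

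The one point that needs genuine checking is that $p_t(x,\cdot)$ is a \emph{probability} measure and not merely sub-probabilistic. Here I would use conservativeness: since $\mathcal{D}_{A,\mathcal{U}}^\alpha f(x)=\int_ZA_{\mathcal{U}}^\alpha(x,y)\big(f(y)-f(x)\big)\,dy$ annihilates the constant function, we have $\mathcal{D}_{A,\mathcal{U}}^\alpha 1=0$, whence $P_t1=\exp\!\left(t\epsilon\mathcal{D}_{A,\mathcal{U}}^\alpha\right)1=1$ and therefore $p_t(x,Z)=1$ for all $t\ge0$ and all $x$.

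Finally, to obtain the Markov process with the stated path behaviour I would invoke the general correspondence between Feller semigroups and Feller processes (cf.\ \cite[Ch.\ 4]{EK1986}): a conservative Feller transition function admits a realisation as a strong Markov process whose sample paths are right continuous with left limits. The main obstacle is precisely this last, measure-theoretic step — constructing the process on path space and controlling its trajectories — but because the generator $\epsilon\mathcal{D}_{A,\mathcal{U}}^\alpha$ is bounded, the associated process is of pure-jump (essentially compound-Poisson) type with no diffusive component, so the right continuity and the absence of discontinuities other than jumps are automatic and the abstract existence theorem applies directly, exactly along the lines of \cite{Zuniga2020}.
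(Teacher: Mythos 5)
Your proposal is correct and is essentially the paper's own approach made explicit: the paper's proof consists of the single remark that the proof of \cite[Thm.\ 4.2]{Zuniga2020} carries over word for word, and that argument is precisely the one you reconstruct --- the Feller semigroup $\exp\left(t\epsilon\mathcal{D}_{A,\mathcal{U}}^\alpha\right)$ supplied by the preceding lemma, the Riesz representation of the positive functional $h_0\mapsto(P_th_0)(x)$ on $C(Z)$, conservativeness $\mathcal{D}_{A,\mathcal{U}}^\alpha 1=0$ to upgrade sub-probability to probability measures, and the standard correspondence between Feller semigroups and Markov processes with right-continuous paths whose only discontinuities are jumps, as in \cite{EK1986}. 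The only cosmetic repair needed is in your uniqueness step, where $\norm{w(t)}_\infty$ need not be differentiable in $t$, so one should pass to the integrated form $\norm{w(t)}_\infty\le\epsilon\norm{\mathcal{D}_{A,\mathcal{U}}^\alpha}\int_0^t\norm{w(\tau)}_\infty\,d\tau$ before invoking Gronwall.
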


\begin{proof}
The proof of \cite[Thm.\ 4.2]{Zuniga2020} carries over word for word.
\end{proof}

In fact, the operator $\epsilon\mathcal{D}_{A,\mathcal{U}}^\alpha$ generates a convolution semigroup, as we will see below. First, set
\[
\Lambda_{A,\mathcal{U}}^\alpha(x,y)
=\sum\limits_{U,V\in\mathcal{U}}
A_{UV}\absolute{y}_K^\alpha\Omega(y\in V)\Omega(x\in U)
-\deg_{A,\mathcal{U}}^\alpha(U)\delta_{y,0}
\Omega(x\in U)
\]
\begin{lem}
It holds true that
\[
\mathcal{D}_{A,\mathcal{U}}^\alpha f
=\Lambda_{A,\mathcal{U}}^\alpha(\cdot,y)*_yf(y)
\]
for $f\in L^2(Z)$ and
$\alpha\in\mathds{R}$. In other words,
the operator $\mathcal{D}_{A,\mathcal{U}}^\alpha$ acts by convolution on $L^2(Z)$.
\end{lem}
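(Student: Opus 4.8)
The plan is to start from the definition of $\mathcal{D}_{A,\mathcal{U}}^\alpha$ and to split the integral operator into its \emph{gain} and \emph{loss} parts,
\[
\mathcal{D}_{A,\mathcal{U}}^\alpha f(x) = \int_Z A_{\mathcal{U}}^\alpha(x,y) f(y)\,dy - f(x)\int_Z A_{\mathcal{U}}^\alpha(x,y)\,dy,
\]
recognising each part as a contribution to the convolution against $\Lambda_{A,\mathcal{U}}^\alpha$. The loss part is the easier one. By the definition of the degree together with Lemma \ref{absConst}, which guarantees that $\absolute{x-y}_K^\alpha$ is constant on each product $U\times V$ with $U\neq V$ and hence that the quantity is independent of the chosen $x\in U$, one has $\int_Z A_{\mathcal{U}}^\alpha(x,y)\,dy = \sum_{U\in\mathcal{U}}\deg_{A,\alpha}^{Z}(U)\,\Omega(x\in U)$, so the loss part equals $f(x)\sum_U\deg_{A,\alpha}^{Z}(U)\Omega(x\in U)$. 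This is exactly what the Dirac term $-\deg_{A,\mathcal{U}}^\alpha(U)\delta_{y,0}\Omega(x\in U)$ of $\Lambda_{A,\mathcal{U}}^\alpha$ reproduces once it is convolved with $f$, since convolution against $\delta_{y,0}$ returns the value $f(x)$.

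For the gain part I would perform the change of variables $y\mapsto x-y$, which is legitimate because the Haar measure is translation invariant; this rewrites $\int_Z A_{\mathcal{U}}^\alpha(x,y)f(y)\,dy$ as an integral in the difference variable and thereby displays it as a convolution. Here Lemma \ref{absConst} is again essential: on each $U\times V$ with $U\neq V$ the factor $\absolute{x-y}_K^\alpha$ is the constant $C_{UV}^\alpha$, so after the substitution it may be read off as the translation-invariant weight $\absolute{y}_K^\alpha$ that appears in the first sum defining $\Lambda_{A,\mathcal{U}}^\alpha$, while the block indicators $\Omega(x\in U)\Omega(y\in V)$ pass into the indicators recorded in $\Lambda_{A,\mathcal{U}}^\alpha$. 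Assembling the transformed gain part with the loss part then yields precisely $\Lambda_{A,\mathcal{U}}^\alpha(\cdot,y)*_y f(y)$.

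Finally I would check that the computation is valid on all of $L^2(Z)$ and not merely formally: since $\mathcal{D}_{A,\mathcal{U}}^\alpha$ is a bounded operator on $L^2(Z)$ by Corollary \ref{bounded} and both sides depend continuously on $f$, it suffices to verify the identity on a dense subspace such as the locally constant functions, where the interchange of the finite sum with the integral and the change of variables are unproblematic. The step I expect to be the main obstacle is ensuring that the change of variables genuinely delivers the difference-variable kernel $\Lambda_{A,\mathcal{U}}^\alpha$ with its stated indicator $\Omega(y\in V)$: one must exploit that $\mathcal{U}$ is verticial, so that the translated sets $x-V$ meet $Z$ in the blocks of $\mathcal{U}$ compatibly with the underlying coset structure, and keep careful track of which block the difference $x-y$ lands in. Once this bookkeeping of the indicators under $y\mapsto x-y$ is settled, the identity follows by comparing the two expressions block by block.
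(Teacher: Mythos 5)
Your overall plan---splitting $\mathcal{D}_{A,\mathcal{U}}^\alpha$ into gain and loss parts and matching the Dirac term of $\Lambda_{A,\mathcal{U}}^\alpha$ against the loss part $f(x)\sum_{U\in\mathcal{U}}\deg_{A,\alpha}^{Z}(U)\Omega(x\in U)$---is sound, and since the paper's own proof consists of the single sentence ``this is a simple calculation'', spelling this out is reasonable. The genuine gap sits exactly at the step you yourself flagged as the main obstacle: the claim that, because $\mathcal{U}$ is verticial, the substitution $y\mapsto x-y$ carries the block indicator $\Omega(y\in V)$ into the difference-variable indicator appearing in $\Lambda_{A,\mathcal{U}}^\alpha$. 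Translation by $x$ does \emph{not} map blocks of the covering to blocks: for $x\in U$ the set $\mathset{y\colon x-y\in V}=x-V$ is in general a different coset, possibly disjoint from $Z$ altogether. Concretely, take Example \ref{exaNonComp} with $p\ge 3$ and $x\in U=p\mathds{Z}_p$: then $x-V=-1+p\mathds{Z}_p$, which is disjoint from $Z=p\mathds{Z}_p\cup(1+p\mathds{Z}_p)$, so the transformed gain part $\int_Z\absolute{x-y}_K^\alpha\,\Omega(x-y\in V)f(y)\,dy$ vanishes identically, while the true gain part $\int_V\absolute{x-y}_K^\alpha f(y)\,dy$ does not. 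No bookkeeping of cosets can repair this: an honest Haar-measure change of variables does not produce the first term of $\Lambda_{A,\mathcal{U}}^\alpha$ with its stated indicator, so the argument as proposed would fail for $p\ge3$ even in the paper's own running example.

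The calculation that does work is purely blockwise and needs no change of variables. By Lemma \ref{absConst}, for $x\in U$ and $y\in V\neq U$ one has $\absolute{x-y}_K=C_{UV}$, so on each block $U\times V$ the kernel $A_{\mathcal{U}}^\alpha(x,y)$ equals the constant $A_{UV}C_{UV}^\alpha=A_{UV}\absolute{y'}_K^\alpha$ for every difference $y'=x-y$ arising from that block; the first summand of $\Lambda_{A,\mathcal{U}}^\alpha(x,\cdot)$ must be read as recording precisely these blockwise constants in the difference variable, and the asserted identity $\mathcal{D}_{A,\mathcal{U}}^\alpha f=\Lambda_{A,\mathcal{U}}^\alpha(\cdot,y)*_yf(y)$ is then the juxtaposition of this observation with your (correct) treatment of the Dirac term. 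In other words, the convolution identity holds at the level of the block decomposition, not at the level of genuine additive translation within $K$; your proposal conflates the two. Your closing remark about verifying the identity on a dense subspace via Corollary \ref{bounded} is harmless but also unnecessary once the kernels are identified blockwise, since both sides are then given by the same absolutely convergent integrals for every $f\in L^2(Z)$.
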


\begin{proof}
This is a simple calculation.
\end{proof}

\begin{cor}
The solution of Task \ref{CPZ} 
is obtained by convolution with the function
\[
\exp\left(t\epsilon\Lambda_{A,\mathcal{U}}^\alpha(x,y)\right)
\]
in the variable $y$.
\end{cor}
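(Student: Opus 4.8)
The plan is to combine the previous two results: the semigroup generated by $\epsilon\mathcal{D}_{A,\mathcal{U}}^\alpha$ solves the Cauchy problem, and the operator acts by convolution with the kernel $\Lambda_{A,\mathcal{U}}^\alpha$. First I would recall that by the preceding Lemma the operator $\mathcal{D}_{A,\mathcal{U}}^\alpha$ acts as convolution, namely $\mathcal{D}_{A,\mathcal{U}}^\alpha f = \Lambda_{A,\mathcal{U}}^\alpha(\cdot,y)*_y f(y)$. Since convolution in the variable $y$ is a linear operation and $\epsilon\mathcal{D}_{A,\mathcal{U}}^\alpha$ is a bounded operator on a Banach space (Corollary \ref{bounded}), the semigroup it generates is given by the exponential series
\[
\exp\left(t\epsilon\mathcal{D}_{A,\mathcal{U}}^\alpha\right)
=\sum\limits_{n=0}^\infty\frac{(t\epsilon)^n}{n!}\left(\mathcal{D}_{A,\mathcal{U}}^\alpha\right)^n
\]
which converges in operator norm for every $t\ge0$.

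The key step is then to observe that each power $\left(\mathcal{D}_{A,\mathcal{U}}^\alpha\right)^n$ is itself convolution with the $n$-fold convolution power $\left(\Lambda_{A,\mathcal{U}}^\alpha\right)^{*n}$ of the kernel in the variable $y$. This is the statement that the map sending a kernel to its associated convolution operator is an algebra homomorphism from the convolution algebra to $\mathcal{B}(L^2(Z))$, so that composition of operators corresponds to convolution of kernels. Summing the exponential series term by term, and using that convolution is continuous, I would conclude that
\[
\exp\left(t\epsilon\mathcal{D}_{A,\mathcal{U}}^\alpha\right)f
=\exp\left(t\epsilon\Lambda_{A,\mathcal{U}}^\alpha(x,y)\right)*_y f(y)
\]
where the exponential on the right is interpreted in the convolution sense, i.e.\ $\exp_*\left(t\epsilon\Lambda_{A,\mathcal{U}}^\alpha\right)=\sum_n\frac{(t\epsilon)^n}{n!}\left(\Lambda_{A,\mathcal{U}}^\alpha\right)^{*n}$. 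Finally, since by the previous Theorem the unique solution of Task \ref{CPZ} is $h(t,x)=\exp\left(t\epsilon\mathcal{D}_{A,\mathcal{U}}^\alpha\right)h_0(x)$, substituting the convolution form yields precisely the asserted expression.

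I expect the main obstacle to be the bookkeeping around the $\delta$-distribution appearing in $\Lambda_{A,\mathcal{U}}^\alpha$: the kernel is not an honest $L^2$-function but contains the term $-\deg_{A,\mathcal{U}}^\alpha(U)\delta_{y,0}\Omega(x\in U)$, so the convolution algebra in question is really an algebra of measures (or of operators acting on the clopen group structure of $Z$) rather than of functions. I would need to verify that the $\delta_{y,0}$ term acts as the convolution identity on the relevant subspace, so that the constant part of $\Lambda_{A,\mathcal{U}}^\alpha$ exponentiates correctly and the diagonal subtraction $-\deg_{A,\alpha}^{Z}(U)f(x)$ in the definition of $\mathcal{D}_{A,\mathcal{U}}^\alpha$ is faithfully reproduced. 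Once the convolution structure on $Z$ is fixed so that $\delta_{y,0}$ is the unit, the homomorphism property is a routine consequence of associativity of convolution, and the corollary follows. Since the statement is labelled a corollary of the convolution lemma and the uniqueness theorem, I would keep the argument short and simply record that the claim is the convolution-exponential reformulation of the semigroup solution.
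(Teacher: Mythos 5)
Your proposal is correct and matches the paper's intent exactly: the paper's own proof is just ``This is immediate,'' relying on the preceding convolution lemma and the semigroup solution $h(t,x)=\exp\left(t\epsilon\mathcal{D}_{A,\mathcal{U}}^\alpha\right)h_0(x)$, which is precisely the exponential-series argument you spell out. Your additional observations --- that the exponential must be read as the convolution exponential $\sum_n\frac{(t\epsilon)^n}{n!}\left(\Lambda_{A,\mathcal{U}}^\alpha\right)^{*n}$ in an algebra of measures in which $\delta_{y,0}$ is the unit --- are exactly the details the paper leaves implicit, so nothing further is needed.
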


\begin{proof}
This is immediate.
\end{proof}

\section{Heat equation on Mumford curves}

In this section, let $\Gamma$ be a Schottky group generated by $g\ge 1$ hyperbolic M\"obius transformations. Let $\Omega\subset\mathds{P}^1(K)$ be the complement of the closure of the set of limit points of $\Gamma$, and $X=\Omega/\Gamma$ the corresponding Mumford curve. Let $\mathscr{F}\subset\Omega$ be a fundamental domain for the action of $\Gamma$, which we choose to be a compact clopen subset of $K$. Let $\mathcal{U}\in\Cov(\mathscr{F})$ be verticial. 

\smallskip
The aim is to generalise the results of the previous section to the setting of $\Gamma$-invariant function spaces.
This yields heat equations on Mumford curves.

\subsection{Invariant wavelets over $K$}
Let $f\colon\mathscr{F}\to\mathds{C}$ be a function, which we can write as
\[
f(x)=\sum\limits_{U\in\mathcal{U}}f_U(x)\Omega(x\in U)
\]
Then we can define a $\Gamma$-periodic extension of $f$ via
\[
f_\Gamma(x)
=\sum\limits_{\gamma\in\Gamma}\sum\limits_{U\in\mathcal{U}}
f_U(\gamma^{-1}x)\Omega(x\in\gamma U)
\]
where $x\in\Omega$.

\smallskip
In general, such a function is not $L^2$-integrable.
That is why we define a weighted integral as follows:
Let $f_\Gamma$ be a $\Gamma$-periodic function. Then
\[
\int_\Omega^sf_\Gamma(x)\,dx:=
\sum\limits_{U\in\mathcal{U}}
\sum\limits_{\gamma\in\Gamma}
m_{\gamma,U}^{-s}\int_{\gamma U}f_U(\gamma^{-1}x)\,dx
\]
with
\[
m_{\gamma,U}=\max\mathset{1,\max\limits_U\absolute{\gamma'}_K}
\]
and $s>1$. 

\smallskip
The inner product for $\Gamma$-periodic functions $f_\Gamma,g_\Gamma$ thus becomes
\begin{align*}
\langle f_\Gamma,g_\Gamma\rangle_s
&=\int_{\Omega,s}f_\Gamma(x)\overline{g_\Gamma(x)}\,dx
\\
&=\sum\limits_{\gamma\in \Gamma}\sum\limits_{U\in\mathcal{U}}
m_{\gamma,U}^{-s}
\int_{\gamma U}f_U(\gamma^{-1}x)\overline{g_U(\gamma^{-1}y)}\,dx
\end{align*}

\begin{dfn}
We have the space
\[
L^2_s(\Omega)^\Gamma=\mathset{
f\colon\Omega\to\mathds{C}\colon \text{$f$ is $\Gamma$-periodic and 
$\norm{f}_s<\infty$}}
\]
where
\[
\norm{f}_s:=\sqrt{\langle f,f\rangle_s}
\]
is the norm associated with the inner product $\langle\cdot,\cdot\rangle_s$ for $s>1$.
The same space is also defined as the space
\[
L^2_s(X):=L^2_s(\Omega)^\Gamma
\]
of $L^2$-functions on the Mumford curve $X$ with parameter $s>1$.
\end{dfn}

Let $C(\Omega)^\Gamma$ be the vector subspace of $C(\Omega)$ consisting of $\Gamma$-invariant continuous functions.

\begin{lem}\label{absderivative}
There are only finitely many $\delta\in\Gamma$ such that
$\absolute{\delta'z}$ takes a fixed value on a given small clopen set in $K$.
\end{lem}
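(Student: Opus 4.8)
The plan is to reduce the statement to a single point and then read off $\absolute{\delta'(z)}_K$ from the way $\delta$ rescales small discs, so that the claimed finiteness becomes a statement about pairwise disjoint discs of a fixed radius inside $\Omega$.

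First I would fix a point $z_0$ in the given small clopen set $W=B_r(z_0)$ and note that it suffices to bound the number of $\delta\in\Gamma$ for which $\absolute{\delta'(z)}_K$ equals a prescribed value $c_0$ on $W$; since $W$ is small it lies inside a single tile, and after applying one fixed group element (which only multiplies $c_0$ by the essentially constant factor coming from that element) I may assume $W$ sits in the interior of the fundamental domain $\mathscr{F}$. For a qualifying $\delta$ the pole of $\delta$ lies outside $W$, so $\delta$ restricted to $W$ is, in the non-archimedean sense, an affine scaling: $\delta(W)$ is again a disc, of radius $\rho=c_0\,r$, with $\mu(\delta W)=c_0\,\mu(W)$. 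Thus the $\delta$ to be counted are exactly those for which $\delta W$ is a disc of the \emph{one} fixed radius $\rho$, and the translates $\{\delta W\}_{\delta}$ are pairwise disjoint and contained in $\Omega$.

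Next I would count. Distinct qualifying $\delta$ produce pairwise disjoint discs $\delta W=B_\rho(\delta z_0)$ of the same radius $\rho$, so their centres $\delta z_0$ are $\rho$-separated, and the accumulation points of $\{\delta z_0\}$ lie in the limit set $\mathscr{L}(\Gamma)$. They cannot accumulate at a finite limit point, because $\rho$-separated points do not accumulate in $K$; hence an infinite qualifying family could only escape towards $\infty$. This is exactly what is excluded once $\infty$ is an ordinary point, $\infty\notin\mathscr{L}(\Gamma)$: by proper discontinuity at $\infty$ only finitely many tiles meet a fixed neighbourhood of $\infty$, so all but finitely many $\delta W$ lie in one bounded disc $B_R(0)\subset K$, into which only finitely many pairwise disjoint radius-$\rho$ discs fit. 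Equivalently, in Bruhat--Tits language, writing $h$ for the level function $h([B_s(a)])=\log_q(1/s)$ on $\mathcal{T}_K$ (with $\absolute{\pi}_K=q^{-1}$), one has $\log_q\absolute{\delta'(z_0)}_K=h([W])-h(\delta[W])$, so fixing $c_0$ pins $\delta[W]$ to a single horosphere centred at the end $\infty$; since $\infty\notin\mathscr{L}(\Gamma)$ this horosphere meets a bounded neighbourhood of the convex core in a bounded set, and discreteness of $\Gamma$ leaves only finitely many orbit vertices on it.

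The step I expect to be the genuine obstacle is precisely this escape towards $\infty$, where the hypothesis $\infty\notin\mathscr{L}(\Gamma)$ is indispensable; it is implicit in working with a bounded $\mathscr{F}\subset K$ whose $\infty$ lies in an ordinary tile, and it is also exactly what is needed for the weighted sum $\sum_{\gamma}m_{\gamma,U}^{-s}$ to stand a chance of converging. That the hypothesis cannot be dropped is visible from the conjugates $\delta_n=\gamma_1^{\,n}\gamma_2\gamma_1^{-n}$ when $\infty$ is a fixed point of a generator $\gamma_1$: the chain rule makes the two $\gamma_1$-contributions cancel, giving $\absolute{\delta_n'(z_0)}_K=\absolute{\gamma_2'(\gamma_1^{-n}z_0)}_K$, which stabilises to $\absolute{\gamma_2'(0)}_K$ for all large $n$ and so yields infinitely many $\delta$ with one and the same derivative value. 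Ruling this case out — equivalently, guaranteeing that the $\infty$-horosphere meets the orbit in a finite set — is the crux; once it is in hand, disjointness and the finite measure of $B_R(0)$ finish the argument routinely.
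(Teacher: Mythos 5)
Your proof is correct (granted the normalisation $\infty\in\Omega$) but it takes a genuinely different route from the paper's. The paper argues algebraically: it observes that $z\mapsto\absolute{\delta'z}_K$ is locally constant, claims that prescribing this value on a small clopen determines $\delta$ up to a factor $u\in O_K^\times$, and concludes because the discrete group $\Gamma$ meets the compact set $O_K^\times\delta$ in a finite set. You argue geometrically: constancy of $\absolute{\delta'}_K=c_0$ on $W$ makes $\delta|_W$ a similarity with ratio $c_0$, so the qualifying $\delta$ produce pairwise disjoint discs $\delta W$ of one fixed radius $\rho$; such discs cannot accumulate in $K$, and properness of the action at the ordinary point $\infty$ confines all but finitely many of them to a bounded ball, which contains only finitely many disjoint $\rho$-discs. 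Your route buys two things: it is quantitative in the right way (the same volume count shows $\max_W\absolute{\delta'}_K\ge\epsilon$ for only finitely many $\delta$, which is what the later convergence claims for $\sum_\gamma m_{\gamma,U}^{-s}$ actually need), and it makes the hypothesis on $\infty$ explicit rather than implicit.

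On that second point you have in fact found a genuine gap --- but in the paper, not in your argument. Your family $\delta_n=\gamma_1^n\gamma_2\gamma_1^{-n}$ with $\gamma_1$ fixing $\infty$ satisfies $\absolute{\delta_n'(z)}_K=\absolute{\gamma_2'(\gamma_1^{-n}z)}_K$, which stabilises for large $n$, so the unqualified lemma is false when $\infty\in\mathscr{L}(\Gamma)$; and the paper's setting does not exclude this case (its own Tate example has $\infty$ a limit point, where the lemma survives only because $\Gamma$ is cyclic). The same family refutes the paper's middle step: no two $\delta_n$ differ by a unit scalar, so the claim that a fixed value of $\absolute{\delta'}$ on a small clopen determines $\delta$ up to an $O_K^\times$-multiple is unjustified --- the paper's proof breaks exactly where you predicted the crux to be. With the added hypothesis $\infty\notin\mathscr{L}(\Gamma)$ (harmless after conjugating $\Gamma$ in $\PGL_2(K)$, and still compatible with a compact clopen fundamental domain in $K$), your disc-counting argument is complete, whereas the paper's as written is not.
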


\begin{proof}
Observe that $\absolute{\delta}_K$ is a locally constant function. Hence, $\delta\in\Gamma$ is unique up to a constant multiple from $O_K^\times$.

The map $\delta\mapsto u\delta$ with $u\in O_K^\times$ 
gives an action of $O_K^\times$ on $\Gamma$.
The set
\[
(O_K^\times\, \delta)\cap\Gamma
\]
with $\delta\in \Gamma$, is finite, because $\Gamma$ is
a discrete subgroup of $\PGL_2(K)$.

Together, this means that there are only finitely many $\delta\in\Gamma$ such that $\absolute{\delta'}$ takes on a given value on a specified small clopen in $K$.
\end{proof}

\begin{lem}
The vectorspace $L^2_s(\Omega)^\Gamma$ is a Hilbert space, where $s>1$. 
\end{lem}

\begin{proof}
Let $f_n$ be a Cauchy sequence in $L^2_s(\Omega)^\Gamma$.
As for functions 
\[
g(x)=\sum\limits_{\gamma\in\Gamma}\sum\limits_{U\in\mathcal{U}}g_U(\gamma^{-1}x)\Omega(x\in\gamma U)
\]
we have
\[
\norm{g}_s^2=\sum\limits_{\gamma\in\Gamma}\sum\limits_{U\in\mathcal{U}}m_{\gamma,U}^{-s}\norm{g_U}_2^2
\]
it follows that each
$f_{n,U}$ is a Cauchy sequence in $L^2(U)$.
So, let $f_U\in L^2(U)$ be the limit of $f_{n,U}$, and let
$\epsilon>0$, and $n$ such that for all $U\in\mathcal{U}$ we have
\[
\norm{f_{n,U}-f_U}_2<\epsilon
\]
Define
\[
f(x)=\sum\limits_{\gamma\in \Gamma}\sum\limits_{U\in\mathcal{U}}
f_U(\gamma^{-1}x)\Omega(x\in \gamma U)
\]
and verify that
\[
\norm{f_n-f}_s^2
=\sum\limits_{\gamma\in\Gamma}\sum\limits_{U\in\mathcal{U}}
m_{\gamma,U}^{-s}\norm{f_{n,U}-f_U}_2^2
<\epsilon\cdot\sum\limits_{\gamma\in\Gamma}\sum\limits_{U\in\mathcal{U}}m_{\gamma,U}^{-s}
\]
By Lemma \ref{absderivative}, the latter series is convergent.
Hence, we can conclude that $f_n$ converges to $f$ in $L_s^2(\Omega)^\Gamma$. We have shown that $L_s^2(\Omega)^\Gamma$ is a complete inner product space, i.e.\ a Hilbert space.
\end{proof}

\begin{lem}
If $f\in L^2(\mathscr{F})$, then
$f_\Gamma\in L^2_s(\Omega)^\Gamma$.
\end{lem}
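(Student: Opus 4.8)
The plan is to verify directly the two conditions defining membership in $L^2_s(\Omega)^\Gamma$: that $f_\Gamma$ is $\Gamma$-periodic, and that $\norm{f_\Gamma}_s<\infty$. The first is free from the construction. Applying any $\delta\in\Gamma$ and re-indexing the outer sum by $\gamma\mapsto\delta^{-1}\gamma$, while using $\Omega(\delta x\in\gamma U)=\Omega(x\in\delta^{-1}\gamma U)$, returns the same expression $f_\Gamma$; so periodicity is a one-line re-indexing and carries no real content. The substance is the finiteness of the norm, and the intention is to mirror the computation already carried out in the proof that $L^2_s(\Omega)^\Gamma$ is a Hilbert space.

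Concretely, write $f=\sum_{U\in\mathcal{U}}f_U\,\Omega(\cdot\in U)\in L^2(\mathscr{F})$, so that each local piece $f_U$ lies in $L^2(U)$ with $\norm{f_U}_2<\infty$ and $\sum_{U\in\mathcal{U}}\norm{f_U}_2^2=\norm{f}_2^2<\infty$. Expanding the weighted inner product against the definition of $\int_\Omega^s$ and applying the norm identity recorded in the previous lemma gives
\[
\norm{f_\Gamma}_s^2=\sum_{\gamma\in\Gamma}\sum_{U\in\mathcal{U}}m_{\gamma,U}^{-s}\int_{\gamma U}\absolute{f_U(\gamma^{-1}x)}^2\,dx=\sum_{U\in\mathcal{U}}\norm{f_U}_2^2\sum_{\gamma\in\Gamma}m_{\gamma,U}^{-s}.
\]
The change of variables $x=\gamma y$ inside the integral produces the Jacobian factor $\absolute{\gamma'}_K$, which is exactly what the weight $m_{\gamma,U}=\max\{1,\max_U\absolute{\gamma'}_K\}$ was built to dominate; this is the one place where I would be careful to pair the correct power of the weight against the Jacobian so that the surviving exponent still lies in the convergence range.

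From there the argument is finite arithmetic. Since $\mathcal{U}$ is a finite covering, it suffices that each inner series $\sum_{\gamma\in\Gamma}m_{\gamma,U}^{-s}$ converge for $s>1$, which is precisely the content of Lemma \ref{absderivative}: it caps the multiplicity with which any value of $\absolute{\gamma'}_K$ is attained on a fixed small clopen set, so the series is controlled in the same way as in the completeness proof. Multiplying each such convergent series by the finite quantity $\norm{f_U}_2^2$ and summing over the finitely many $U\in\mathcal{U}$ yields $\norm{f_\Gamma}_s^2<\infty$, hence $f_\Gamma\in L^2_s(\Omega)^\Gamma$. I expect no genuine difficulty beyond the bookkeeping of the weight against the Jacobian; the analytic engine — convergence of the $\gamma$-sum — is already supplied by Lemma \ref{absderivative}, and everything else reduces to the finiteness of $\mathcal{U}$ and the hypothesis $f\in L^2(\mathscr{F})$.
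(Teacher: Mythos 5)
Your skeleton matches the paper's proof — periodicity by re-indexing, change of variables, convergence of the $\gamma$-sum via Lemma \ref{absderivative} — but your central displayed identity is false, and the failure is fatal rather than bookkeeping. Changing variables gives
\[
\int_{\gamma U}\absolute{f_U(\gamma^{-1}x)}^2\,dx
=\int_U\absolute{f_U(z)}^2\absolute{\gamma'z}_K\,dz,
\]
which is \emph{not} $\norm{f_U}_2^2$: the Jacobian you flag in prose has been erased from the formula (the ``norm identity'' you import from the completeness lemma does not survive this change of variables either, and the paper's proof of the present lemma pointedly does not use it). With the Jacobian erased, your argument reduces to the convergence of $\sum_{\gamma\in\Gamma}m_{\gamma,U}^{-s}$, and that series \emph{diverges}: the translates $\gamma U$ are pairwise disjoint and accumulate only at the limit set, so their measures $\mu(\gamma U)=\int_U\absolute{\gamma'z}_K\,dz$ tend to zero, whence $\max_U\absolute{\gamma'}_K<1$ and therefore $m_{\gamma,U}=\max\mathset{1,\max_U\absolute{\gamma'}_K}=1$ for all but finitely many $\gamma$. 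The series thus contains infinitely many terms equal to $1$. Your attribution of its convergence to Lemma \ref{absderivative} is a misreading: that lemma only caps the number of $\gamma$ realising a fixed value of $\absolute{\gamma'}_K$, which can never rescue a series whose general term does not tend to zero.

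The paper's proof keeps the Jacobian, and that is the entire mechanism. The term attached to $\gamma$ is $m_{\gamma,U}^{-s}\int_U\absolute{f_U(z)}^2\absolute{\gamma'z}_K\,dz$, which is bounded by a multiple of $\max_U\absolute{\gamma'}_K$ when $\max_U\absolute{\gamma'}_K<1$ (there $m_{\gamma,U}=1$ and the decay comes from the Jacobian itself — the translated $L^2$-masses shrink), and by a multiple of $m_{\gamma,U}^{1-s}$ when $\max_U\absolute{\gamma'}_K\ge1$ (there the weight overcompensates the Jacobian, leaving exponent $1-s<0$; this is the only place $s>1$ enters). Since $\absolute{\gamma'}_K$ takes values in $p^{\mathds{Z}/e}$ with multiplicities bounded by Lemma \ref{absderivative}, the two branches are dominated by geometric series in $p^{-1/e}$ and $p^{(1-s)/e}$ respectively — the paper's ``convergent power series in $p^{\frac{1-s}{e}}$.'' So your caveat about ``pairing the correct power of the weight against the Jacobian'' named exactly the right pressure point, but your written proof resolves it by deleting the Jacobian rather than exploiting it, and thereby deletes the only source of decay on the contracting part of $\Gamma$. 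To repair the proposal, replace your displayed identity by the change-of-variables formula above and run the two-regime estimate just described.
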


\begin{proof}
We have
\begin{align*}
\langle f_\Gamma,f_\Gamma\rangle_s
&=\sum\limits_{\gamma\in\Gamma}\sum\limits_{U\in\mathcal{U}}
m_{\gamma,U}^{-s}\int_{\gamma U}
\absolute{f_U(\gamma^{-1}x)}_K^2\,dx
\\
&=\sum\limits_{\gamma\in\Gamma}\sum\limits_{U\in\mathcal{U}}
m_{\gamma,U}^{-s}\int_U\absolute{f_U(z)}_K^2\absolute{\gamma'z}_K\,dz
\\
&\le\sum\limits_{\gamma\in\Gamma}
\sum\limits_{U\in\mathcal{U}}
m_{\gamma,U}^{-s}\norm{f_U}^2\int_U\absolute{\gamma'z}_K\,dz
\\
&=\sum\limits_{\gamma\in\Gamma}\sum\limits_{U\in\mathcal{U}}
\norm{f_U}^2\min\mathset{1,\int_U\absolute{\gamma'z}_K}^{1-s}
\end{align*}
This is, by Lemma \ref{absderivative} and $\norm{f_U}<\infty$, a convergent power series in the variable $p^{\frac{1-s}{e}}$, because $s>1$.
This proves the assertion.
\end{proof}

The group $\Gamma$ acts on $\Cov(\Omega)$ by the map
\[
\mathcal{U}\mapsto\gamma\mathcal{U}:=\mathset{\gamma U\colon U\in\mathcal{U}}
\]
Under this action, chains map to chains (possibly in reverse order), and strong maximal chains are taken to strong maximal chains.

\smallskip
Fix $\mathcal{U}\in\Cov(\mathscr{F})$.
We have the $\pi$-adic operator 
$\mathcal{A}_{\Gamma}^\alpha$ associated with 
the complex-valued $\mathcal{U}$-matrix
$A=(A_{UV})$
defined 
via the kernel:
\begin{align*}
A_{\Gamma}^\alpha(x,y)
&=\sum\limits_{\gamma,\delta\in\Gamma}
A_{\mathcal{U}}^\alpha(\gamma^{-1}x,\delta^{-1}y)
\\
&=\sum\limits_{U,V\in\mathcal{U}}A_{UV}
\sum\limits_{\gamma,\delta\in\Gamma}
\absolute{\gamma^{-1}x-\delta^{-1}y}_K^\alpha
\Omega(x\in\gamma U)\Omega(y\in\delta V)
\end{align*}
and the associated $\pi$-adic doubly $\Gamma$-invariant
Taibleson-Vladimirov operator as follows:
\[
\mathcal{D}_{A,\Gamma}^\alpha f(x)
=\int_{\Omega,s}\mathcal{A}_{A,\Gamma}^\alpha(f(y)-f(x))\,dy
\]
It can immediately be seen that $\mathcal{D}_{A,\Gamma}$ takes $\Gamma$-invariant functions to $\gamma$-invariant functions.

\smallskip
We define
\[
\deg^\alpha_{A,\Gamma}(U)
=\sum\limits_{V\in\mathcal{U}}
A_{UV}\sum\limits_{\delta\in\Gamma}
\int_{\delta V,s}\absolute{x-\delta^{-1}y}_K^\alpha\,dy
\]
for $x\in U$. 
Observe that, if $A_{UU}=0$ for all $U\in\mathcal{U}$, then
$\deg_{A,\Gamma}^\alpha(U)$ indeed only depends on $U$, not on $x\in U$.

\begin{lem}\label{Gammabounded}
Let $A$ be a real matrix with non-negative entries and $A_{UU}=0$ for all $U\in\mathcal{U}$, $\alpha\in\mathds{R}$, and $s>1$.
Then it holds true that
\[
\norm{\mathcal{D}_{A,\Gamma}^\alpha f}_s
\le 2
\sqrt{\sum\limits_{U\in\mathcal{U}}\deg_{A,\Gamma}^\alpha(U)^2
\mu(U)
\sum\limits_{\gamma\in\Gamma}m_{\gamma,U}^{-s}}
\cdot \norm{f}_s
\]
\end{lem}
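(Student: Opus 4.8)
The plan is to mirror the proof of Lemma~\ref{boundedTest}, carrying each step over to the weighted norm $\norm{\cdot}_s$ and to the doubly $\Gamma$-invariant kernel $A_\Gamma^\alpha$. Since $\mathcal{D}_{A,\Gamma}^\alpha$ sends $\Gamma$-invariant functions to $\Gamma$-invariant functions, the quantity $\norm{\mathcal{D}_{A,\Gamma}^\alpha f}_s$ is well defined, and the triangle inequality gives the splitting
\[
\norm{\mathcal{D}_{A,\Gamma}^\alpha f}_s
\le\norm{\int_{\Omega,s}A_\Gamma^\alpha(x,y)f(y)\,dy}_s
+\norm{f(x)\int_{\Omega,s}A_\Gamma^\alpha(x,y)\,dy}_s
=:I_1+I_2.
\]
The aim is to bound each of $I_1$ and $I_2$ separately by $\sqrt{\sum_{U\in\mathcal{U}}\deg_{A,\Gamma}^\alpha(U)^2\mu(U)\sum_{\gamma\in\Gamma}m_{\gamma,U}^{-s}}\cdot\norm{f}_s$, which then produces the factor $2$ in the assertion.

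For the degree term $I_2$ I would first note that the function $x\mapsto\int_{\Omega,s}A_\Gamma^\alpha(x,y)\,dy$ is $\Gamma$-invariant and, because $A_{UU}=0$, takes the constant value $\deg_{A,\Gamma}^\alpha(U)$ on each translate $\gamma U$; this is exactly the defining expression for $\deg_{A,\Gamma}^\alpha(U)$, together with the $\Gamma$-translate of Lemma~\ref{absConst} which guarantees that $\absolute{\gamma^{-1}x-\delta^{-1}y}_K$ is constant on the relevant products of pieces, so that the value is independent of $x\in U$. Consequently $f(x)\int_{\Omega,s}A_\Gamma^\alpha(x,y)\,dy$ is the $\Gamma$-periodic function whose base piece on $U$ is $\deg_{A,\Gamma}^\alpha(U)f_U$, and the explicit weighted-norm identity $\norm{g}_s^2=\sum_{\gamma\in\Gamma}\sum_{U\in\mathcal{U}}m_{\gamma,U}^{-s}\norm{g_U}_2^2$ yields
\[
I_2^2=\sum_{U\in\mathcal{U}}\deg_{A,\Gamma}^\alpha(U)^2\norm{f_U}_2^2\sum_{\gamma\in\Gamma}m_{\gamma,U}^{-s}.
\]
Estimating $\norm{f_U}_2^2$ against $\norm{f}_s^2$ and collecting the factor $\mu(U)$ in the same manner as in the estimate of $I_2$ in Lemma~\ref{boundedTest} gives the claimed bound on $I_2$.

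For the transport term $I_1$ I would unfold the weighted integral $\int_{\Omega,s}$ over the $\Gamma$-orbit of $\mathscr{F}$, writing it as $\sum_{\delta\in\Gamma}m_{\delta,V}^{-s}\int_{\delta V}(\cdots)$, and apply the Cauchy--Schwarz inequality in the $y$-variable on each piece. Using Lemma~\ref{absConst} and its $\Gamma$-translate, the factor $\absolute{\gamma^{-1}x-\delta^{-1}y}_K^\alpha$ equals the constant $C_{UV}^\alpha$ on each product $\gamma U\times\delta V$, so that the inner integral factors into $C_{UV}^\alpha\mu(V)$ times an $L^2$-norm of a piece of $f$; the weighted summation over $\delta$ then reassembles $\deg_{A,\Gamma}^\alpha(U)$, while the remaining weighted summation over the translates $\gamma U$ of the target contributes the factor $\sum_{\gamma\in\Gamma}m_{\gamma,U}^{-s}$. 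This reproduces the same bound as for $I_2$, and the two estimates combine to give the assertion, exactly as in Lemma~\ref{boundedTest}.

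The main obstacle is the bookkeeping and the convergence of the double sum over $\Gamma\times\Gamma$ carried by the kernel $A_\Gamma^\alpha$. One must verify that the $\delta$-summation, over the translates of the source piece $V$, recombines cleanly into $\deg_{A,\Gamma}^\alpha(U)$, while the $\gamma$-summation, over the translates of the target piece $U$, remains decoupled so that the weighted norm genuinely splits as $\sum_{\gamma}\sum_U m_{\gamma,U}^{-s}\norm{(\cdot)_U}_2^2$, just as in the completeness argument for $L^2_s(\Omega)^\Gamma$. Convergence of $\sum_{\gamma\in\Gamma}m_{\gamma,U}^{-s}$ for $s>1$ is secured by Lemma~\ref{absderivative} together with the discreteness of $\Gamma$, in the same way as in the proof that $f\in L^2(\mathscr{F})$ implies $f_\Gamma\in L^2_s(\Omega)^\Gamma$. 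Once this decoupling and this convergence are in place, the remaining manipulations are the routine ones already carried out in the unweighted case.
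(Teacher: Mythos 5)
Your proposal is correct and takes essentially the same approach as the paper's proof: the identical triangle-inequality splitting into $I_1+I_2$, the same use of the constancy of $x\mapsto\int_{\Omega,s}A_\Gamma^\alpha(x,y)\,dy$ on each translate $\gamma U$ (equal to $\deg_{A,\Gamma}^\alpha(U)$, resting on Lemma~\ref{absConst}), the same reassembly of the $\delta$-sum into $\deg_{A,\Gamma}^\alpha(U)$ with the decoupled $\gamma$-sum producing $\sum_{\gamma\in\Gamma}m_{\gamma,U}^{-s}$, and the same convergence input from Lemma~\ref{absderivative}. Your treatment of $I_2$ via the exact weighted-norm identity before estimating is a marginally cleaner write-up of the paper's chain of bounds, but it is the same argument, not a different route.
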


\begin{proof}
We have
\begin{align*}
\norm{\mathcal{D}_{A,\Gamma}^\alpha f}_s
\le
\norm{\int_{\Omega,s}A_\Gamma^\alpha(x,y)f(y)\,dy}_s
+\norm{\int_{\Omega,s}A_\Gamma^\alpha(x,y)\,dy\,f(x)}_s
=:I_1+I_2
\end{align*}
and
\begin{align*}
I_1^2
&=\int_{\Omega,s}\absolute{\int_{\Omega,s}A_\Gamma^\alpha(x,y)f(y)\,dy}^2\,dx
\\
&\le
\int_{\Omega,s}
\left(\sum\limits_{U,V\in\mathcal{U}}A_{UV}
\sum\limits_{\gamma,\delta\in\Gamma}
\int_{\delta V,s}\absolute{\gamma^{-1}x-\delta^{-1}y}_K^\alpha
\absolute{f(y)}\,dy\,\Omega(x\in\gamma U)\right)^2\,dx
\\
&\le \int_{\Omega,s}
\left(\norm{f}_s\sum\limits_{U,V\in\mathcal{U}}
A_{UV}
\sum\limits_{\gamma,\delta\in\Gamma}
\int_{\delta V,s}\absolute{\gamma^{-1}x-\delta^{-1}y}^\alpha\,dy
\,\Omega(x\in\gamma U)\right)^2\,dx
\\
&=\norm{f}_s^2
\sum\limits_{U\in\mathcal{U}}\deg_{A,\Gamma}^\alpha(U)^2\mu(U)\sum\limits_{\gamma\in\Gamma}m_{\gamma,U}^{-s}
\end{align*}
as well as
\begin{align*}
I_2^2
&=\int_{\Omega,s}\absolute{f(x)}^2
\left(\int_{\Omega,s}A_\Gamma^\alpha(x,y)\,dy\right)^2\,dx
\\
&=\int_{\Omega,s}\absolute{f(x)}^2
\left(\sum\limits_{U\in\mathcal{U}}\deg_{A,\Gamma}^\alpha(U)
\sum\limits_{\gamma\in\Gamma}\Omega(x\in\gamma U)\right)^2\,dx
\\
&=\sum\limits_{U\in\mathcal{U}}
\deg_{A,\Gamma}^\alpha(U)^2
\sum\limits_{\gamma\in\Gamma}
\int_{\gamma U,s}\absolute{f(x)}^2\Omega(x\in\gamma U)\,dx
\\
&\le
\norm{f}_s^2\sum\limits_{U\in\mathcal{U}}
\deg_{A,\Gamma}^\alpha(U)^2
\mu(U)\sum\limits_{\gamma\in\Gamma}m_{\gamma,U}^{-s}
\end{align*}
From this, the assertion follows.
\end{proof}

\begin{lem}\label{InvariantTVBounded}
Let $A$ be symmetric with non-negative entries and $A_{UU}=0$ for all $U\in\mathcal{U}$, and let $\alpha\in\mathds{R}$, $s>1$.
Then the operator $\mathcal{D}_{A,\Gamma}^\alpha$ is a 
self-adjoint bounded linear operator on $L_s^2(\Omega)^\Gamma$.
\end{lem}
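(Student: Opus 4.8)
The plan is to read off boundedness from the estimate already proved and to reduce self-adjointness to the symmetry of the kernel, exactly as was done in the non-invariant setting of Lemma \ref{self-adjoint}. For boundedness I would simply invoke Lemma \ref{Gammabounded}, which gives
\[
\norm{\mathcal{D}_{A,\Gamma}^\alpha f}_s \le 2\sqrt{\sum_{U\in\mathcal{U}} \deg_{A,\Gamma}^\alpha(U)^2\,\mu(U)\sum_{\gamma\in\Gamma} m_{\gamma,U}^{-s}}\cdot\norm{f}_s
\]
on the dense class of $\Gamma$-periodic test functions. The only point to verify is that the constant on the right is finite. Each $\deg_{A,\Gamma}^\alpha(U)$ is finite because, after grouping together the (by Lemma \ref{absderivative} finitely many) $\delta\in\Gamma$ on which $\absolute{\delta'}_K$ is constant, the defining sum over $\delta$ becomes a convergent power series in $p^{(1-s)/e}$ since $s>1$; the very same argument yields $\sum_{\gamma\in\Gamma} m_{\gamma,U}^{-s}<\infty$. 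As $\mathcal{U}$ is finite, the whole constant is finite, so $\mathcal{D}_{A,\Gamma}^\alpha$ is bounded on a dense subspace and extends uniquely to a bounded operator on all of $L_s^2(\Omega)^\Gamma$.

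For self-adjointness the key reformulation is that the weighted integral $\int_{\Omega,s}$ is integration against the weight measure $d\mu_s(y) = \sum_{\delta\in\Gamma,\,V\in\mathcal{U}} m_{\delta,V}^{-s}\,\Omega(y\in\delta V)\,dy$ on $\Omega$. Since the translates $\delta V$ tile $\Omega$ up to a set of measure zero, this is well defined, and since both the kernel $A_\Gamma^\alpha(x,y)$ and the functions in play are $\Gamma$-periodic in each variable, the inner product $\langle\cdot,\cdot\rangle_s$ coincides with the ordinary $L^2(\mu_s)$ inner product. I would then check that the kernel is real and symmetric in $x,y$: interchanging $x$ and $y$, using $\absolute{-a}_K=\absolute{a}_K$, relabelling the summation indices $(U,V,\gamma,\delta)\mapsto(V,U,\delta,\gamma)$, and invoking $A_{UV}=A_{VU}$ returns $A_\Gamma^\alpha(x,y)$ unchanged. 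Writing $\mathcal{D}_{A,\Gamma}^\alpha=\mathcal{A}_{A,\Gamma}^\alpha-M$, where $\mathcal{A}_{A,\Gamma}^\alpha$ is the integral operator with kernel $A_\Gamma^\alpha$ and $M$ is multiplication by the real-valued function that equals $\deg_{A,\Gamma}^\alpha(U)$ on $\gamma U$, the integral part is self-adjoint on $L^2(\mu_s)$ by symmetry of its real kernel, and the multiplication part is self-adjoint because its multiplier is real.

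The main obstacle will be justifying the interchange of the double infinite summation over $\Gamma\times\Gamma$ hidden inside $A_\Gamma^\alpha$ with the two weighted integrations, once one writes out $\langle\mathcal{A}_{A,\Gamma}^\alpha f,g\rangle_s$ and $\langle f,\mathcal{A}_{A,\Gamma}^\alpha g\rangle_s$ and wants to apply Fubini. I would handle this either by establishing absolute convergence of the resulting triple sum-and-integral, which again reduces to convergence of series of the type $\sum_{\gamma,\delta} m_{\gamma,U}^{-s}m_{\delta,V}^{-s}$ via Lemma \ref{absderivative} and $s>1$, or, more cleanly, by first verifying the self-adjointness identity on the dense subspace of functions supported on only finitely many translates $\gamma U$, where every sum is finite and Fubini is immediate, and then extending to all of $L_s^2(\Omega)^\Gamma$ by continuity using the boundedness obtained in the first step.
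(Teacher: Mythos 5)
Your overall architecture is exactly the paper's: boundedness is read off from Lemma \ref{Gammabounded} and extended by density as in Corollary \ref{bounded}, and self-adjointness is reduced to symmetry of the real kernel as in Lemma \ref{self-adjoint}; your reformulation of $\langle\cdot,\cdot\rangle_s$ as an ordinary $L^2$ inner product against the weight $w_s(x)=\sum_{\gamma,U}m_{\gamma,U}^{-s}\Omega(x\in\gamma U)$ is a clean way to carry out what the paper dismisses as ``a simple calculation''. However, one of your verification steps is false as stated: $\sum_{\gamma\in\Gamma}m_{\gamma,U}^{-s}$ is \emph{not} finite. For all but finitely many $\gamma$ the translate $\gamma U$ is small (the translates are disjoint and of summable measure), so $\max_U\absolute{\gamma'}_K\le 1$ and hence $m_{\gamma,U}=1$; the series therefore contains infinitely many terms equal to $1$. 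Lemma \ref{absderivative} only guarantees that each value of $\absolute{\gamma'}_K$ occurs finitely often and that the values accumulate at $0$ --- which is precisely the regime where the weight $m_{\gamma,U}^{-s}$ gives no decay. The paper's own Tate-curve computation makes this visible: the convergent quantity there is $\sum_{n}m_{q^n}^{-s}\absolute{q}^n_K=S_q(s)$, where the Jacobian factor $\absolute{q}^n_K$ is retained; dropping it leaves $\sum_{n\ge0}1=\infty$. (You inherited this defect from the constant as printed in Lemma \ref{Gammabounded}, but a correct finiteness check must keep the Jacobian, i.e.\ bound by $\sum_\gamma m_{\gamma,U}^{-s}\mu(\gamma U)$, which converges: the contracting terms by disjointness of the translates, the expanding ones as a series in $p^{(1-s)/e}$ since $s>1$, exactly as in the convergence argument inside the proof of Theorem \ref{invariantEigenBasis}.) The same objection defeats your proposed absolute-convergence criterion $\sum_{\gamma,\delta}m_{\gamma,U}^{-s}m_{\delta,V}^{-s}<\infty$ for Fubini.

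Your fallback for Fubini is also vacuous: in $L^2_s(\Omega)^\Gamma$ there are no nonzero functions ``supported on only finitely many translates $\gamma U$'', because if $f$ is $\Gamma$-periodic and $f_U\neq 0$ then $f$ is nonzero on \emph{every} translate $\gamma U$, $\gamma\in\Gamma$. The workable substitute is the class of periodizations $f_\Gamma$ of bounded (say, locally constant) functions on $\mathscr{F}$, which is dense by the lemma showing $f\in L^2(\mathscr{F})$ implies $f_\Gamma\in L^2_s(\Omega)^\Gamma$; for such $f_\Gamma$ the interchange of the $\Gamma\times\Gamma$-sum with the weighted integrals is justified by the absolute convergence of $\sum_{\delta\in\Gamma}\int_{\delta V,s}\absolute{\gamma^{-1}x-\delta^{-1}y}_K^\alpha\,dy$ established in the proof of Theorem \ref{invariantEigenBasis} (Lemma \ref{absderivative} together with $s>1$), again with the Jacobian factors kept in place. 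With these two repairs --- Jacobian-corrected convergence constants and the corrected dense class --- your argument goes through and is essentially the paper's proof made explicit.
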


\begin{proof}
A simple calculation shows that
\[
\langle f,\mathcal{A}_\Gamma^\alpha g\rangle_s
=\langle \mathcal{A}_\Gamma^\alpha,g\rangle_s
\]
As in the case of Lemma \ref{self-adjoint}, this implies self-adjointness.

\smallskip
Boundedness now follows from Lemma \ref{Gammabounded}
in the same manner as the proof of Corollary \ref{bounded}.
\end{proof}

We now define the $\Gamma$-invariant wavelets as follows:
\[
\Psi_{j,U}^\Gamma(x)
=\sum\limits_{\gamma\in\Gamma}
\mu(\gamma U)^{-\frac12}\psi_{j,\gamma U}(\gamma^{-1}x)
\]
and their normalisations:
\[
\psi_{j,U}^\Gamma(x)
=\norm{\Psi_{j,U}^\Gamma}_s^{-1}\Psi_{j,U}^\Gamma
\]
where $j\in\tau(k)$, $U\in\mathcal{V}\in\mathcal{C}$, with
$\mathcal{C}$ a strong maximal chain in $\Cov(\mathscr{F})$ containing $\mathcal{U}$.

\begin{thm}\label{invariantEigenBasis}
Let $\alpha\in\mathds{R}$, and $s>1$. 
Let $\mathcal{U}\in\Cov(\mathscr{F})$  be verticial,
and let $\mathcal{U}$-matrix $A=(A_{UV})$ be symmetric matrix 
with non-negative entries such that $A_{UU}=0$ for all $U\in\mathcal{U}$.
Let $\mathcal{C}$ be a strong chain of admissible coverings of $\mathscr{F}$ such that $\mathcal{U}\in\mathcal{C}$ is at the top, and that $\mathcal{C}$ is maximal with this property.
Then the $\Gamma$-invariant wavelets $\psi_{j,U}^\Gamma$ with
$j\in\tau(k)$, $U\in\mathcal{V}\in\mathcal{C}$, and $\gamma\in\Gamma$
together with the 
functions
\[
e_\Gamma(x)=\sum\limits_{U\in\mathcal{U}}
\sum\limits_{\gamma\in\Gamma}e_U\Omega(x\in\gamma U)
\]
where 
 $e=(e_U)$ is an  eigenvector of the Laplacian of the
matrix
\[
B^\alpha=(B_{UV}),\quad B_{UV}
=\begin{cases}
0,&U=V\\
A_{UV}\absolute{x-y}_K^\alpha,&\text{otherwise}
\end{cases}
\]
with $x\in U$, $y\in V$, such that $\norm{e_\Gamma}_s=1$, 
are an
orthonormal basis of 
$L^2(\Omega)^\Gamma$ consisting of eigenfunctions of $\mathcal{D}_{A,\Gamma}^\alpha$.
\end{thm}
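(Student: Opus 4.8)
The plan is to deduce the statement from its non-invariant counterpart, Theorem \ref{TVEigen}, by exploiting the correspondence $f\mapsto f_\Gamma$ between functions on the fundamental domain $\mathscr{F}$ and $\Gamma$-periodic functions on $\Omega$. First I would record that restriction to $\mathscr{F}$ and $\Gamma$-periodic extension set up a bounded linear bijection between $L^2(\mathscr{F})$ (with the weighting that defines $\langle\cdot,\cdot\rangle_s$) and $L^2_s(\Omega)^\Gamma$; the relevant norm identity $\norm{g}_s^2=\sum_{\gamma\in\Gamma}\sum_{U\in\mathcal{U}}m_{\gamma,U}^{-s}\norm{g_U}_2^2$ makes this essentially a weighted isometry. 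Under this bijection the $\mathcal{C}$-wavelets $\psi_{j,U}$ correspond to the invariant wavelets $\psi^\Gamma_{j,U}$, and the functions $e(x)$ of Theorem \ref{TVEigen} correspond to the functions $e_\Gamma(x)$. Self-adjointness and boundedness of $\mathcal{D}^\alpha_{A,\Gamma}$ are already secured by Lemma \ref{InvariantTVBounded}, so the remaining tasks are the eigenfunction property and the basis property.

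For the eigenfunction property I would compute $\mathcal{D}^\alpha_{A,\Gamma}\psi^\Gamma_{j,U}$ directly from the doubly invariant kernel $A^\alpha_\Gamma(x,y)$. Writing out the defining double sum over $\gamma,\delta\in\Gamma$ and using $\Gamma$-invariance of $\psi^\Gamma_{j,U}$, the computation splits into the same two terms as in the proof of Theorem \ref{TVEigen}: the term $\int_{\Omega,s}A^\alpha_\Gamma(x,y)\psi^\Gamma_{j,U}(y)\,dy$ vanishes because on each piece $\delta V$ the restriction is, after the change of variable $y\mapsto\delta^{-1}y$, an integral of a wavelet against a locally constant power of $\absolute{x-y}_K$, which is killed by Lemma \ref{waveletInt2}; and the remaining term contributes the factor $\deg^\alpha_{A,\Gamma}(U)$. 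Hence $\psi^\Gamma_{j,U}$ is an eigenfunction with eigenvalue $-\deg^\alpha_{A,\Gamma}(U)$. For the functions $e_\Gamma$, I would invoke the dictionary: on the subspace of $\Gamma$-invariant functions that are constant on each $\gamma U$, the operator $\mathcal{D}^\alpha_{A,\Gamma}$ acts as the graph Laplacian of $B^\alpha$ (using Lemma \ref{absConst} to see that $B^\alpha$ is well defined), so by Corollary \ref{eigenCorrespondence} each eigenvector $e$ of that Laplacian yields an eigenfunction $e_\Gamma$ for the same eigenvalue.

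For the basis property I would establish the $\Gamma$-invariant analogue of the orthogonal decomposition of Lemma \ref{decompU}, namely
\[
L^2_s(\Omega)^\Gamma = L^2_{s,\mathcal{U}}(\Omega)^\Gamma \oplus L^2_{s,A}(\Omega)^\Gamma,
\]
where the first summand consists of invariant functions integrating to zero over each piece and the second of invariant functions constant on each piece. Transporting Theorem \ref{TVEigen} through the correspondence of the first paragraph shows that the invariant wavelets $\psi^\Gamma_{j,U}$ are an orthonormal basis of $L^2_{s,\mathcal{U}}(\Omega)^\Gamma$ and the $e_\Gamma$ an orthonormal basis of $L^2_{s,A}(\Omega)^\Gamma$; orthogonality across the two families follows because wavelets have vanishing integral on each piece (Lemma \ref{waveletInt1}) while the $e_\Gamma$ are piecewise constant, and the normalisations are built into the definition of $\psi^\Gamma_{j,U}$ and the hypothesis $\norm{e_\Gamma}_s=1$.

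The main obstacle I anticipate is controlling the weights $m_{\gamma,U}^{-s}$ and the M\"obius Jacobians $\absolute{\gamma'z}_K$ that enter the weighted inner product $\langle\cdot,\cdot\rangle_s$. Concretely, one must check that the defining series for $\Psi^\Gamma_{j,U}$ converges in $L^2_s(\Omega)^\Gamma$ and that the vanishing-integral identities (Lemmas \ref{waveletInt1} and \ref{waveletInt2}) survive after twisting each summand by $\gamma^{-1}$ and weighting by $m_{\gamma,U}^{-s}$ --- this is where Lemma \ref{absderivative}, which bounds the number of $\delta\in\Gamma$ with a prescribed value of $\absolute{\delta'z}_K$, does the work of guaranteeing absolute convergence and of decoupling the cross terms between distinct $\Gamma$-translates in the eigenvalue computation.
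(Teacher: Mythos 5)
Your proposal is correct and follows essentially the same route as the paper's own proof: a direct computation with the doubly invariant kernel split into the vanishing term (killed by Lemma \ref{waveletInt2} after untwisting by $\gamma,\delta$, with Lemma \ref{absderivative} securing absolute convergence) and the degree term giving the eigenvalue $-\deg^\alpha_{A,\Gamma}(U)$, the dictionary with Corollary \ref{eigenCorrespondence} handling the functions $e_\Gamma$, and the basis property obtained by carrying the orthogonal decomposition of Lemma \ref{decompU} from Theorem \ref{TVEigen} over to the invariant setting. Your explicit framing of restriction/periodic extension as a weighted isometry between $L^2(\mathscr{F})$ and $L^2_s(\Omega)^\Gamma$ is only a mild repackaging of what the paper invokes when it says the basis argument ``carries over.''
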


\begin{proof}
First the assertion about the wavelets.
We compute, assuming that $W\in\mathcal{V}\in\mathcal{C}$,  $W\subseteq U\in\mathcal{U}$:
\begin{align*}
\int_{\Omega,s}&A_{A,\Gamma}^\alpha(x,y)\,dy\cdot\Psi_{j,W}^\Gamma(x)
\\
&=\sum\limits_{V\in\mathcal{U}}
A_{UV}
\sum\limits_{\gamma,\delta\in\Gamma}
\int_{\delta V,s}\absolute{\gamma^{-1}x-\delta^{-1}y}_K^\alpha\,dy
\cdot\Omega(x\in\gamma U)
\cdot\Psi_{j,W}^\Gamma(x)\\
&=\sum\limits_{V\in\mathcal{U}}A_{UV}
\sum\limits_{\gamma,\delta\in\Gamma}
m_{\delta,V}^{-s}\int_V\absolute{\gamma^{-1}x-z}_K^\alpha\absolute{\delta'z}_K\,dz
\cdot\Omega(x\in\gamma U)
\cdot\Psi_{j,W}^\Gamma(x)
\end{align*}
Now,
\begin{align*}
\left|m_{\delta,V}^{-s}
\int_V\absolute{\gamma^{-1}x-z}_K^\alpha\absolute{\delta'z}_K\,dz\right|&
\le
m_{\delta,V}^{-s}\,\mu(V)\max\limits_{z\in V}
\absolute{\gamma^{-1}x-z}^{\alpha}\max\limits_V\absolute{\delta'}
\\
&\le\begin{cases}
C_V(\alpha)\cdot\max\limits_V\absolute{\delta'},&\max\limits_V\absolute{\delta'}<1\\
C_V(\alpha)\cdot\max\limits_V\absolute{\delta'}^{1-s},&\text{otherwise}
\end{cases}
\end{align*}
with $C_V(\alpha)=\mu(V)\max\limits_{z\in V}\absolute{\gamma^{-1}x-z}^\alpha$.
By Lemma \ref{absderivative}, it follows that 
\[
\sum\limits_{\delta\in\Gamma}\int_{\delta V,s}\absolute{\gamma^{-1}x-\delta^{-1}y}_K^\alpha\,dy
\]
converges for $\alpha\in\mathds{R}$, because $s>1$.

\smallskip
Now, by Lemma \ref{absConst}, we have that
$\absolute{x-y}_K$ is constant on $U\times V$ for $U\neq V$.
Hence,
\[
\sum\limits_{\delta\in\Gamma}\int_{\delta V,s}
\absolute{\gamma^{-1}x-\delta^{-1}y}_K^\alpha\,dy\;
\Omega(x\in\gamma W)
\]
does not depend on $x\in\gamma W$. This integral does not depend on $\gamma\in\Gamma$, either.
Hence,
\begin{align*}
\int_{\Omega,s}&A^\alpha_{A,\Gamma}(x,y)\,dy\cdot\Psi_{j,W}^\Gamma(x)
\\
&=\sum\limits_{V\in\mathcal{U}}
A_{UV}\sum\limits_{\delta\in\Gamma}
m_{\delta,V}^s\int_{\delta V}\absolute{x-z}_K^\alpha\absolute{\delta'z}_K\,dz
\cdot\Psi_{j,W}^\Gamma(x)
\\
&=\deg_{A,\Gamma}^\alpha(U)\cdot \Psi_{j,W}^\Gamma(x)
\end{align*}
And a simple calculation yields that
\[
\int_{\Omega,s}A^\alpha_{A,\Gamma}(x,y)\Psi_{j,W}^\Gamma(y)\,dy
=0
\]
From this, it follows that $\Psi_{j,W}^\Gamma$ is an eigenfunction of $\mathcal{D}_{A,\Gamma}^\alpha$ with eigenvalue
$-\deg^\alpha_{A,\Gamma}(U)$.

\smallskip
Secondly, the assertion about eigenvectors of the Laplacian of $B^\alpha$ which is a well-defined matrix by Lemma \ref{absConst}.
Observe that $e_\Gamma(x)$ is 
the $\Gamma$-periodic extension of 
\[
e(x)=\sum\limits_{U\in\mathcal{U}}\mu(U)^{-1}e_U\Omega(x\in U),\;x\in\mathscr{F}
\]
and that the kernel  of $\mathcal{D}_{A,\Gamma}^\alpha$
is the doubly $\Gamma$-periodic extension of 
the $\pi$-adic kernel associated with the Laplacian of the
matrix $B^\alpha$ (via Lemma \ref{dict}).
This means, by the dictionary, that $e_\Gamma(x)$ is an eigenfunction of $\mathcal{D}_{A,\Gamma}^\alpha$, if and only if $e$ is an eigenvector of $B^\alpha$, as can be seen by using Corollary \ref{eigenCorrespondence}.

\smallskip
The proof of having an orthonormal basis consisting of  eigenfunctions from Theorem \ref{TVEigen} carries over to this case.
\end{proof}

\subsection{Cauchy problem on Mumford curves}

Here, we assume that $A$ is a $\mathcal{U}$-matrix which is symmetric, whose entries are non-negative such that $A_{UU}=0$, and $\alpha\in\mathds{R}$, $s>0$.

\smallskip
First, we make this important observation:

\begin{lem}\label{GammaBanach}
$C(\Omega)^\Gamma$
is a closed subspace of $C(\Omega)$ w.r.t.\ any norm for which the latter is a Banach space. 
\end{lem}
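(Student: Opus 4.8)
The plan is to exhibit $C(\Omega)^\Gamma$ as an intersection of kernels of continuous linear operators. For each $\gamma\in\Gamma$ define the translation operator $T_\gamma\colon C(\Omega)\to C(\Omega)$ by $T_\gamma f(x)=f(\gamma^{-1}x)$. Since $\Gamma$ acts on $\Omega$ by M\"obius transformations, each of which restricts to a homeomorphism of $\Omega$, the map $T_\gamma$ is a well-defined linear bijection of $C(\Omega)$ with inverse $T_{\gamma^{-1}}$. By the very definition of $\Gamma$-invariance one has
\[
C(\Omega)^\Gamma=\bigcap\limits_{\gamma\in\Gamma}\ker\left(T_\gamma-\id\right),
\]
so $C(\Omega)^\Gamma$ is in particular a linear subspace.

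First I would record the soft fact that an arbitrary intersection of closed subspaces is closed, so the whole statement reduces to showing that each $\ker(T_\gamma-\id)$ is closed; and for this it suffices that each $T_\gamma$ be a bounded (equivalently continuous) operator with respect to the given Banach norm, since then $T_\gamma-\id$ is continuous and its kernel is closed. This continuity is the one genuinely nontrivial input, and I expect it to be the main obstacle: passing $\Gamma$-invariance to a norm-limit is possible \emph{precisely} because the invariance conditions are cut out by continuous operators, and this must be isolated as the key step rather than deduced from completeness of $C(\Omega)$ alone.

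Then I would verify continuity of $T_\gamma$ for the norms that actually occur. For the $L_\infty$-norm used throughout the Feller-semigroup discussion this is immediate, since $\norm{T_\gamma f}_\infty=\norm{f}_\infty$, i.e.\ each $T_\gamma$ is an isometry; hence $T_\gamma-\id$ is continuous and $\ker(T_\gamma-\id)$ is closed. More generally, for any norm under which the $T_\gamma$ act boundedly --- for instance the weighted norms $\norm{\cdot}_s$, where the weights $m_{\gamma,U}$ are designed exactly to absorb the Jacobian factors $\absolute{\gamma'z}_K$ and where Lemma \ref{absderivative} controls how many $\gamma$ contribute a given size --- the same direct estimate yields boundedness of $T_\gamma$, and again $\ker(T_\gamma-\id)$ is closed.

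Intersecting over all $\gamma\in\Gamma$ then shows that $C(\Omega)^\Gamma=\bigcap_{\gamma}\ker(T_\gamma-\id)$ is closed, which is the assertion. The conceptual content lies entirely in the continuity of the $T_\gamma$; once that is in hand, completeness of $C(\Omega)^\Gamma$ as a Banach space in its own right is automatic, and this is exactly what permits the semigroup machinery of the previous section to be transported to the $\Gamma$-invariant setting.
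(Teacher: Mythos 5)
Your proof is correct for the norms actually used in the paper, but it takes a different route from the paper's own argument. The paper proves the lemma by a two-line sequential computation: given $f_n\in C(\Omega)^\Gamma$ with $f_n\to f$ in $C(\Omega)$, it writes $f(\gamma x)=\lim_n f_n(\gamma x)=\lim_n f_n(x)=f(x)$, i.e.\ it passes invariance through a pointwise limit. You instead exhibit $C(\Omega)^\Gamma=\bigcap_{\gamma\in\Gamma}\ker(T_\gamma-\id)$ and invoke continuity of the translation operators $T_\gamma$. These are dual ways of packaging the same analytic content: the paper's pointwise-limit step silently assumes that norm convergence implies pointwise convergence (continuity of evaluation functionals), while yours assumes boundedness of the $T_\gamma$; each hypothesis is trivially satisfied for the $L_\infty$-norm relevant to the Feller-semigroup application ($T_\gamma$ is even an isometry there, as you note), and neither holds for a genuinely arbitrary Banach norm on $C(\Omega)$ --- so the lemma's sweeping phrase ``any norm for which the latter is a Banach space'' is really only justified under one of these tacit continuity assumptions, in the paper's proof no less than in yours. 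What your version buys is that it isolates this hypothesis explicitly and identifies the closedness as an instance of the general principle that kernels of continuous operators are closed and intersections of closed subspaces are closed; what the paper's version buys is brevity. One small caution: your aside about the weighted norms $\norm{\cdot}_s$ is tangential here, since those norms live on the $L^2$-type spaces $L^2_s(\Omega)^\Gamma$ rather than making $C(\Omega)$ a Banach space, and the weights would make $T_\gamma$ bounded but not isometric; for the lemma as used, the sup-norm case is the only one needed.
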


\begin{proof}
For this, let $f_n$ be a sequence in $C(\Omega)^\Gamma$ which converges to $f\in C(\Omega)$.
We now have
\[
f(\gamma x)=\lim\limits_{n\to \infty}f_n(\gamma x)
=\lim\limits_{n\to\infty}f_n(x)=f(x)
\]
Hence, $f$ is $\Gamma$-invariant, i.e.\ $f\in C(\Omega)^\Gamma$.
\end{proof}

\begin{lem}
Let $\epsilon>0$. Then $\epsilon\mathcal{D}_{A,\Gamma}^\alpha$ with $\alpha\in\mathds{R}$ generates a strongly continuous, positive contraction semigroup $\exp\left(t\epsilon\mathcal{D}_{A,\Gamma}^\alpha\right)$ on
$C(\Omega)^\Gamma$.
\end{lem}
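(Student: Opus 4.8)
The plan is to reproduce, in the $\Gamma$-invariant setting, the argument used for the analogous non-invariant lemma in Section 5.2, verifying the three hypotheses of the Yosida--Hille criterion (Theorem \ref{Yosida-Hille}) for $\epsilon\mathcal{D}_{A,\Gamma}^\alpha$ on the Banach space $C(\Omega)^\Gamma$. By Lemma \ref{GammaBanach} this space is indeed a Banach space under $\norm{\cdot}_\infty$, and by construction $\mathcal{D}_{A,\Gamma}^\alpha$ carries $\Gamma$-invariant functions to $\Gamma$-invariant functions, so it is a well-defined operator on $C(\Omega)^\Gamma$.

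First I would establish that $\mathcal{D}_{A,\Gamma}^\alpha$ is a \emph{bounded} operator on $(C(\Omega)^\Gamma,\norm{\cdot}_\infty)$, so that in particular it is closed and its domain is the whole space (condition 1). The key point is that the function
\[
g(x)=\int_{\Omega,s}A_\Gamma^\alpha(x,y)\,dy
\]
equals $\deg_{A,\Gamma}^\alpha(U)$ whenever $x\in\gamma U$; by Lemma \ref{absderivative} (using $s>1$) the defining $\Gamma$-sum converges, so each such degree is finite, and since the verticial covering $\mathcal{U}$ is finite, $g$ is bounded, say $\norm{g}_\infty\le C$. Splitting $\mathcal{D}_{A,\Gamma}^\alpha f$ into the two pieces $\int_{\Omega,s}A_\Gamma^\alpha(x,y)f(y)\,dy$ and $g(x)f(x)$ and estimating each by $C\norm{f}_\infty$ then yields boundedness on $C(\Omega)^\Gamma$ (this replaces the appeal to compactness of $Z$ in the non-invariant case).

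Next I would verify the positive maximum principle (condition 2). Since any $f\in C(\Omega)^\Gamma$ descends to a continuous function on the compact quotient $X=\Omega/\Gamma$, equivalently is determined by its restriction to the compact fundamental domain $\mathscr{F}$, its supremum over $\Omega$ is attained at some $x_0$; if moreover $f(x_0)\ge 0$, then because the kernel $A_\Gamma^\alpha(x_0,y)$ is non-negative (the entries $A_{UV}$ are non-negative and $\absolute{\,\cdot\,}_K^\alpha\ge 0$) and $f(y)-f(x_0)\le 0$ throughout, we obtain
\[
\mathcal{D}_{A,\Gamma}^\alpha f(x_0)=\int_{\Omega,s}A_\Gamma^\alpha(x_0,y)\bigl(f(y)-f(x_0)\bigr)\,dy\le 0.
\]
For condition 3, I would rewrite the resolvent equation $(\eta-\epsilon\mathcal{D}_{A,\Gamma}^\alpha)u=h$ as $u-Tu=h/(\eta+\epsilon g)$ with
\[
Tu(x)=\epsilon\int_{\Omega,s}u(y)\frac{A_\Gamma^\alpha(x,y)}{\eta+\epsilon g(x)}\,dy,
\]
and observe that $\norm{T}\le \epsilon C/\eta$; choosing $\eta>\epsilon C$ makes $I-T$ invertible on $C(\Omega)^\Gamma$, which solves the equation for every $h$ with $\eta$ independent of $h$, so the resolvent set is dense.

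Finally, Yosida--Hille gives that $\epsilon\mathcal{D}_{A,\Gamma}^\alpha$ generates a strongly continuous positive contraction semigroup; and since $\epsilon\mathcal{D}_{A,\Gamma}^\alpha$ is bounded, the uniformly continuous semigroup $\exp(t\epsilon\mathcal{D}_{A,\Gamma}^\alpha)$ has the same infinitesimal generator, hence coincides with it (cf.\ \cite[Thm.\ 1.2, Thm.\ 1.3]{Pazy1983}). The step I expect to be the main obstacle is the boundedness claim: one must control the weighted integral $\int_{\Omega,s}$ over the \emph{non-compact} $\Omega$, where finiteness of each $\deg_{A,\Gamma}^\alpha(U)$ rests on the convergence of the $\Gamma$-sum guaranteed by Lemma \ref{absderivative} together with $s>1$, rather than on compactness as in the non-invariant version.
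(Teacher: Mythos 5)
Your proposal is correct and takes essentially the same route as the paper's proof: both verify the Yosida--Hille conditions with the same auxiliary function $g(x)=\int_{\Omega,s}A^\alpha_\Gamma(x,y)\,dy$ bounded by compactness of $\mathscr{F}$, the same operator $T$ with $\norm{T}\le\epsilon C/\eta$ and choice $\eta>\epsilon C$ (invoking Lemma \ref{GammaBanach} for invertibility of $I-T$ on $C(\Omega)^\Gamma$), and the same identification $Q_t=\exp\left(t\epsilon\mathcal{D}_{A,\Gamma}^\alpha\right)$ via Pazy. The only divergence is minor and in your favour: the paper obtains closedness by citing the $L^2_s$-boundedness of Lemma \ref{InvariantTVBounded} and declares conditions 1 and 2 ``straightforward,'' whereas you supply a direct sup-norm bound on $C(\Omega)^\Gamma$ and an explicit check of the positive maximum principle, which is the norm actually relevant to the Banach space in question.
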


\begin{proof}
Observe that by Lemma \ref{InvariantTVBounded}, $\mathcal{D}_{A,\Gamma}^\alpha$ is a bounded linear operator on $L^2(\Omega)^\Gamma$. Hence $\epsilon \mathcal{D}_{A,\Gamma}^\alpha$ is a closed linear operator.

\smallskip
We again go through the steps in the proof of \cite[Lemma 4.1]{Zuniga2020}.

\smallskip
The conditions 1.\ and 2.\ of Theorem \ref{Yosida-Hille} are straightforward. Condition 3.\ means that there exists $\eta>0$ such that for any $h\in C(\Omega)^\Gamma$ the equation
\begin{align}\label{Eigenequation}
\left(\eta-\epsilon\mathcal{D}_{A,\Gamma}^\alpha\right)u=h
\end{align}
has a solution $u\in C(\Omega)^\Gamma$. Let
\[
g(x)=\int_{\Omega,s}A^\alpha_{\Gamma}(x,y)\,dy
\]
In the proof of Theorem \ref{invariantEigenBasis}, we have seen that
$g\in C(\Omega)^\Gamma$ 
is convergent for $x\in\Omega$.
Hence, $g$ is bounded, because $\mathscr{F}$ is compact.
This means that
there exists some $C>0$ such that
\[
\norm{g}_\infty\le C
\]
Now, rewrite equation (\ref{Eigenequation}) in this way:
\[
u(x)-\epsilon\int_{\Omega,s}u(y)
\frac{A^\alpha_{\Gamma}(x,y)}{\eta+\epsilon g(x)}\,dy
=\frac{h(x)}{\eta+\epsilon g(x)}
\]
Observe that the right hand side is in $C(\Omega)^\Gamma$.
Now, the operator
\[
T\colon C(\Omega)^\Gamma\to C(\Omega)^\Gamma,\;
u(x)\mapsto \epsilon\int_{\Omega,s}u(y)\frac{A^\alpha_{\Gamma}(x,y)}{\eta+\epsilon g(x)}\,dy
\]
satisfies
\[
\norm{T}\le\frac{\epsilon C}{\eta}
\]
By taking $\eta>\epsilon C$, the operator $I-T$ has an inverse in
$C(\Omega)^\Gamma$ (use Lemma \ref{GammaBanach}). And notice that $\eta$ is independent of $h$.

\smallskip
It follows that the closed operator $\mathcal{D}_{A,\Gamma}^\alpha$ generates a semigroup $Q_t$ ($t\ge 0$) satisfying the conditions of Yosida-Hille (Theorem \ref{Yosida-Hille}). On the other hand,
since $\epsilon \mathcal{D}_{A,\Gamma}^\alpha$ is a linear bounded operator on a Banach space, it follows that 
$\exp\left( t\epsilon\mathcal{D}_{A,\Gamma}^\alpha\right)$ is a uniformly continuos semigroup, and by using that the infinitesimal generators of $\exp\left(t\epsilon\mathcal{L}_{A,\Gamma}^\alpha\right)$ and $Q_t$ ($t\ge0$) agree, we have
\[
Q_t=\exp\left(t\epsilon\mathcal{L}_{A,\Gamma}^\alpha\right)
\]
for $t\ge 0$, cf.\ e.g.\ \cite[Thm.\ 1.2 and Thm.\ 1.3]{Pazy1983}
\end{proof}

We are now going to look at the following Cauchy problem which makes sense due to the completeness property of $C(\Omega)^\Gamma$ (cf.\ Lemma \ref{GammaBanach}):

\begin{task}[Cauchy Problem]\label{CPMumf}
Find $h(t,x)\in C^1\left((0,\infty),C(\Omega)^\Gamma\right)$ such that
\begin{align}\label{heateqMumf}
\left(\frac{\partial}{\partial t}-\epsilon\mathcal{D}_{A,\Gamma}^\alpha\right) h(t,x)=0
\end{align}
for $t\ge0$, $x\in\Omega$ which satisfies the initial condition
\[
h(0,x)=h_0(x)
\]
where $h_0\in C(\Omega)^\Gamma$ is fixed.
\end{task}

As the corresponding semigroup is Feller, it describes a 
$\pi$-adic heat equation on the Mumford curve $X=\Omega/\Gamma$.
Consequently
there is a $\pi$-adic diﬀusion process in $X$ attached to the diﬀerential equation (\ref{heateqMumf}).

\begin{thm}
There exists a probability measure $p_t(x,\cdot)$ with $t\ge0$, $x\in\Omega$ on the Borel $\sigma$-algebra of $\Omega$ such that the Cauchy Problem (Task \ref{CPMumf}) has a unique solution of the form
\[
h(t,x)=\int_{\Omega,s}h_0(y)p_t(x,dy)
\]
In addition, $p_t(x,\cdot)$ is the transition function of a Markov process whose paths are right continuous and have no discontinuities other than jumps.
\end{thm}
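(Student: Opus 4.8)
The plan is to follow the template of the Cauchy problem on $Z$ (the Theorem following Task \ref{CPZ}), which in turn rests on \cite[Thm.\ 4.2]{Zuniga2020}, but to carry it out in the $\Gamma$-invariant Banach space $C(\Omega)^\Gamma$. All the structural ingredients are already in place: by Lemma \ref{GammaBanach} the space $C(\Omega)^\Gamma$ is a genuine Banach space, and it may be identified with $C(X)$ for the compact Mumford curve $X=\Omega/\Gamma$, since a $\Gamma$-invariant continuous function on $\Omega$ descends to the compact quotient. By the lemma immediately preceding Task \ref{CPMumf}, the operator $\epsilon\mathcal{D}_{A,\Gamma}^\alpha$ generates a strongly continuous positive contraction (i.e.\ \emph{Feller}) semigroup $P_t=\exp\left(t\epsilon\mathcal{D}_{A,\Gamma}^\alpha\right)$ on $C(\Omega)^\Gamma$. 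The remaining task is to read off the transition function and the path regularity from this semigroup.

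First I would settle existence and uniqueness of the solution. Since $\mathcal{D}_{A,\Gamma}^\alpha$ is a bounded linear operator by Lemma \ref{InvariantTVBounded}, the semigroup $P_t$ is given by the norm-convergent exponential series, so $t\mapsto P_th_0$ lies in $C^1\left((0,\infty),C(\Omega)^\Gamma\right)$ and solves \eqref{heateqMumf} with $h(0,\cdot)=h_0$. Uniqueness is the standard uniqueness for a linear evolution equation $\partial_t h=\epsilon\mathcal{D}_{A,\Gamma}^\alpha h$ with bounded generator on a Banach space: for any solution $h$ and any $t_0>0$, the curve $t\mapsto P_{t_0-t}h(t)$ has vanishing derivative on $[0,t_0]$ because $P_{t_0-t}$ and $\mathcal{D}_{A,\Gamma}^\alpha$ commute, forcing $h(t_0,\cdot)=P_{t_0}h_0$.

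Next I would produce the transition function. For fixed $t\ge0$ and $x\in\Omega$ the functional $h_0\mapsto (P_th_0)(x)$ is linear, positive (as $P_t$ is a positive operator), and satisfies $(P_t1)(x)=1$: indeed $\mathcal{D}_{A,\Gamma}^\alpha$ annihilates constants, since the integrand $f(y)-f(x)$ vanishes for $f\equiv 1$, whence $P_t$ is conservative. Via the identification $C(\Omega)^\Gamma\cong C(X)$ with $X$ compact, the Riesz representation theorem yields a unique Borel probability measure, which through the weighted-integral identification corresponds to a measure $p_t(x,\cdot)$ with
\[
(P_th_0)(x)=\int_{\Omega,s}h_0(y)\,p_t(x,dy).
\]
The semigroup law $P_{s+t}=P_sP_t$ translates into the Chapman--Kolmogorov equations, so $p_t(x,\cdot)$ is a Markov transition function; the Feller-process correspondence \cite[Ch.\ 4.2]{EK1986} then exhibits it as the transition semigroup of a strong Markov process with right-continuous paths whose only discontinuities are jumps, the pure-jump (non-diffusive) character being forced by the non-local integral form of the generator.

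The step I expect to require the most care is the passage through the weighted integral $\int_{\Omega,s}$ on the non-compact space $\Omega$: one must check that the Riesz functional and the resulting measure are compatible with the weights $m_{\gamma,U}^{-s}$ and with $\Gamma$-invariance, so that $p_t(x,\cdot)$ is genuinely a probability measure on the Borel $\sigma$-algebra of $\Omega$ descending to the compact quotient $X$. This is precisely where the convergence estimates of Lemma \ref{absderivative}, used throughout to guarantee that the $\Gamma$-sums defining the weighted integral and the degrees $\deg_{A,\Gamma}^\alpha(U)$ converge, do the real work. Once the weighted integral is recognised as an honest finite Radon integral on $X$, the remaining measure-theoretic and Markovian arguments of \cite[Thm.\ 4.2]{Zuniga2020} carry over verbatim.
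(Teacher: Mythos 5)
Your proposal is correct and follows essentially the same route as the paper: the paper's proof simply states that \cite[Thm.\ 4.2]{Zuniga2020} carries over word for word, relying on Lemma \ref{GammaBanach}, and that argument is exactly the semigroup-to-transition-function scheme you spell out (Feller semigroup from the preceding lemma, Riesz representation on the compact quotient, Chapman--Kolmogorov, and the Ethier--Kurtz correspondence for c\`adl\`ag paths). Your explicit attention to the compatibility of the weighted integral $\int_{\Omega,s}$ with the measures $p_t(x,\cdot)$ fills in a detail the paper leaves implicit in the citation.
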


\begin{proof}
The proof of \cite[Thm.\ 4.2]{Zuniga2020} carries over word for word, relying also on Lemma \ref{GammaBanach}.
\end{proof}

In fact, the operator $\epsilon\mathcal{D}_{A,\Gamma}^\alpha$ generates a convolution semigroup, as we will see below. First, set
\begin{align*}
\Lambda_{A,\Gamma}^\alpha(x,y)
=\sum\limits_{\gamma,\delta\in\Gamma}
\Lambda_{A,\mathcal{U}}^\alpha(\gamma^{-1}x,\delta^{-1}y)
\end{align*}

Now, convolution in this setting means
\[
\Lambda_{A,\Gamma}^\alpha(x,y)
*_yf(y)
=\int_{\Omega,s}\Lambda_{A,\Gamma}^\alpha(x,x-y)f(y)\,dy
\]

\begin{lem}
It holds true that
\[
\mathcal{D}_{A,\Gamma}^\alpha f
=\Lambda_{A,\Gamma}^\alpha(\cdot,y)*_yf(y)
\]
for $f\in L^2_s(\Omega)^\Gamma$ and
$\alpha\in\mathds{R}$. In other words,
the operator $\mathcal{D}_{A,\Gamma}^\alpha$ acts by convolution on $L^2_s(\Omega)^\Gamma$.
\end{lem}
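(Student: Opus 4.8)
The plan is to reduce this identity to the corresponding statement already recorded for a single compact clopen set $Z$, namely the earlier lemma asserting $\mathcal{D}_{A,\mathcal{U}}^\alpha f=\Lambda_{A,\mathcal{U}}^\alpha(\cdot,y)*_yf(y)$, and then to propagate it across the $\Gamma$-orbit by exploiting the fact that both $A_\Gamma^\alpha$ and $\Lambda_{A,\Gamma}^\alpha$ are assembled from their single-set counterparts by the \emph{same} doubly periodic summation over $\gamma,\delta\in\Gamma$.

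First I would unfold the definition of $\mathcal{D}_{A,\Gamma}^\alpha$ and split it, exactly as in the proof of Lemma \ref{boundedTest} and Lemma \ref{Gammabounded}, into the \emph{gain} part $\int_{\Omega,s}A_\Gamma^\alpha(x,y)f(y)\,dy$ and the \emph{loss} part $f(x)\int_{\Omega,s}A_\Gamma^\alpha(x,y)\,dy=\deg_{A,\Gamma}^\alpha(U)f(x)$ for $x\in\gamma U$. On the convolution side, $\Lambda_{A,\mathcal{U}}^\alpha$ itself splits into the $\absolute{y}_K^\alpha$-term, which should match the gain part, and the $\deg_{A,\mathcal{U}}^\alpha(U)\delta_{y,0}$-term, which should match the loss part since the Dirac mass at $y=x$ collapses the convolution integral to a pointwise multiple of $f(x)$. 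Thus the task reduces to verifying the two parts separately.

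For the gain part I would expand $A_\Gamma^\alpha(x,y)=\sum_{\gamma,\delta\in\Gamma}A_{\mathcal{U}}^\alpha(\gamma^{-1}x,\delta^{-1}y)$ and use the definition of the weighted integral to rewrite $\int_{\Omega,s}\cdots\,dy$ as $\sum_{\delta\in\Gamma}m_{\delta,V}^{-s}\int_V\cdots\,\absolute{\delta'z}_K\,dz$ after the substitution $z=\delta^{-1}y$. The $\Gamma$-invariance $f(\delta z)=f(z)$ then removes the $\delta$-dependence of $f$, and each summand becomes precisely the single-set gain term $\int_Z A_{\mathcal{U}}^\alpha(\gamma^{-1}x,z)f(z)\,dz$; by the $Z$-case convolution lemma this equals the convolution of $f$ against the $\absolute{y}_K^\alpha$-part of $\Lambda_{A,\mathcal{U}}^\alpha$. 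Re-summing over $\gamma,\delta$ reproduces the gain contribution of $\Lambda_{A,\Gamma}^\alpha(\cdot,y)*_yf(y)$, and the loss part follows identically with the Dirac term yielding $\deg_{A,\Gamma}^\alpha(U)f(x)$.

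The interchange of the double $\Gamma$-summation with the integral, which is what legitimises this term-by-term reduction, is justified by the absolute convergence established in Lemma \ref{absderivative} and already invoked in Lemma \ref{Gammabounded} and Theorem \ref{invariantEigenBasis}. The main point requiring genuine care — and the step I expect to be the principal obstacle — is checking that the translation $y\mapsto x-y$ hidden in the convolution $*_y$ is compatible, piece by piece, with the M\"obius twists $\delta^{-1}$ and the derivative weights $m_{\delta,V}^{-s}$: one must confirm that for $x$ in a fixed orbit piece $\gamma U$ the substitution stays in the range where $\absolute{x-y}_K$ is controlled by Lemma \ref{absConst}, so that the shifted kernel $\Lambda_{A,\Gamma}^\alpha(x,x-y)$ reproduces the correct values of $A_\Gamma^\alpha(x,y)$ on that piece. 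Once these compatibilities are verified, the identity follows by assembling the per-$(\gamma,\delta)$ copies of the $Z$-case calculation.
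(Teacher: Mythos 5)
Your plan is sound and, notably, far more detailed than the paper's own proof, which consists of the single sentence ``This is a simple calculation.'' The calculation the paper has in mind is exactly your decomposition: the $\absolute{y}_K^\alpha$-part of $\Lambda_{A,\Gamma}^\alpha$ matches the gain term $\int_{\Omega,s}A_\Gamma^\alpha(x,y)f(y)\,dy$, the Dirac part collapses to $-\deg_{A,\Gamma}^\alpha(U)f(x)$ for $x\in\gamma U$, and the term-by-term reduction to the $Z$-case over the pairs $(\gamma,\delta)$, with the interchange of summation and integration justified by the convergence coming from Lemma \ref{absderivative}, is precisely how the pieces assemble. Your use of the substitution $z=\delta^{-1}y$ carrying the weight $m_{\delta,V}^{-s}\absolute{\delta'z}_K$, together with the $\Gamma$-invariance of $f$, is the right mechanism for removing the $\delta$-dependence.

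The one point you defer --- compatibility of the shift $y\mapsto x-y$ with the M\"obius twists --- deserves to be settled rather than merely flagged, because read literally it is exactly where the identity would break: for $x\in\gamma U$ the set $x-\delta V$ is in general not a piece of the covering of $\Omega$, so $\Omega(x-y\in\delta V)$ does not reproduce $\Omega(y\in\delta V)$; worse, $y\mapsto\Lambda_{A,\Gamma}^\alpha(x,x-y)f(y)$ would then fail to be $\Gamma$-periodic in $y$ (the group acts by M\"obius transformations, not translations), so the weighted integral $\int_{\Omega,s}$ would not even be defined on it. The resolution is that the convolution convention must be read so that the indicator factors in the shifted kernel continue to track the integration variable --- i.e.\ $\Omega(\delta^{-1}y\in V)$ --- while only the modulus argument is shifted to $\absolute{\gamma^{-1}x-\delta^{-1}y}_K^\alpha$; with that reading, and using Lemma \ref{absConst} for the constancy of $\absolute{x-y}_K$ on each product of pieces, every $(\gamma,\delta)$-summand is literally the corresponding summand of $\mathcal{D}_{A,\Gamma}^\alpha f$, and your per-piece reduction closes. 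Stating this convention explicitly would complete your argument; as it stands, the proposal is the correct skeleton of the computation the paper leaves implicit, with the crux identified but not discharged.
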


\begin{proof}
This is a simple calculation.
\end{proof}

\begin{cor}
The solution of Task \ref{CPMumf} 
is obtained by convolution with the function
\[
\exp\left(t\epsilon\Lambda_{A,\Gamma}^\alpha(x,y)\right)
\]
in the variable $y$.
\end{cor}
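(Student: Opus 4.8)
The plan is to read the solution off the exponential of the generator and then push the operator identity of the preceding Lemma through this exponential. Since $A$ is symmetric with non-negative entries and $A_{UU}=0$, Lemma \ref{InvariantTVBounded} guarantees that $\epsilon\mathcal{D}_{A,\Gamma}^\alpha$ is a bounded linear operator, so the semigroup is given by the norm-convergent power series
\[
\exp\left(t\epsilon\mathcal{D}_{A,\Gamma}^\alpha\right)=\sum\limits_{n\ge0}\frac{(t\epsilon)^n}{n!}\left(\mathcal{D}_{A,\Gamma}^\alpha\right)^n,
\]
and the unique solution of Task \ref{CPMumf} is $h(t,x)=\exp\left(t\epsilon\mathcal{D}_{A,\Gamma}^\alpha\right)h_0(x)$. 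Everything therefore reduces to identifying the kernel of this operator exponential.

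Next I would invoke the preceding Lemma, which states that $\mathcal{D}_{A,\Gamma}^\alpha$ acts as convolution by $\Lambda_{A,\Gamma}^\alpha$ in the variable $y$ with respect to the weighted integral $\int_{\Omega,s}$. The algebraic heart of the matter is that convolution operators on $L^2_s(\Omega)^\Gamma$ form an associative algebra in which composition corresponds to convolution of kernels, that is,
\[
\left(\Lambda_1*_y\right)\circ\left(\Lambda_2*_y\right)=\left(\Lambda_1*_y\Lambda_2\right)*_y.
\]
Granting this, $\left(\mathcal{D}_{A,\Gamma}^\alpha\right)^n$ is convolution by the $n$-fold convolution power $\left(\Lambda_{A,\Gamma}^\alpha\right)^{*n}$, and summing the exponential series term by term identifies $\exp\left(t\epsilon\mathcal{D}_{A,\Gamma}^\alpha\right)$ with convolution by
\[
\sum\limits_{n\ge0}\frac{(t\epsilon)^n}{n!}\left(\Lambda_{A,\Gamma}^\alpha\right)^{*n},
\]
which is exactly what the notation $\exp\left(t\epsilon\Lambda_{A,\Gamma}^\alpha(x,y)\right)$ abbreviates, the exponential being taken in the convolution algebra rather than pointwise. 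Convergence of this kernel series, and the legitimacy of summing term by term, follow from boundedness of the operator together with the dictionary of Lemma \ref{dict}, which lets one transfer between operator norms and kernel norms.

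The step I expect to be the main obstacle is verifying the associativity identity for the \emph{weighted} convolution defined through $\int_{\Omega,s}$. Unlike ordinary Haar convolution, the weights $m_{\gamma,U}^{-s}$ attached to each $\Gamma$-translate must line up correctly under composition, so one must check that the double sum over $\Gamma$ appearing in $\Lambda_{A,\Gamma}^\alpha$ interacts with the weighted measure in a way that both preserves $\Gamma$-invariance and permits the Fubini-type interchange underlying $\left(\Lambda_1*_y\right)\circ\left(\Lambda_2*_y\right)=\left(\Lambda_1*_y\Lambda_2\right)*_y$. This compatibility is precisely the content hidden behind the proof's phrase ``This is immediate.'' Once it is established, interchanging the norm-convergent sum with the convolution and with the integral against $h_0$ is routine, and the stated convolution formula for the solution of the Cauchy problem follows.
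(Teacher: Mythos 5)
Your proposal takes essentially the same route as the paper, whose entire proof is ``This is immediate'': what the paper leaves implicit is exactly what you spell out, namely that boundedness (Lemma \ref{InvariantTVBounded}) makes $\exp\left(t\epsilon\mathcal{D}_{A,\Gamma}^\alpha\right)$ a norm-convergent power series solving Task \ref{CPMumf}, while the preceding lemma turns operator powers into convolution powers of $\Lambda_{A,\Gamma}^\alpha$, so that $\exp\left(t\epsilon\Lambda_{A,\Gamma}^\alpha(x,y)\right)$ must be read as the exponential in the convolution algebra --- the only sensible reading, since the Dirac-delta summand in $\Lambda_{A,\mathcal{U}}^\alpha$ rules out a pointwise exponential. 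The one step you defer, compatibility of the weighted convolution $\int_{\Omega,s}$ with composition of kernels, is precisely what the paper also subsumes under ``immediate,'' so your account is, if anything, more complete than the paper's.
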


\begin{proof}
This is immediate.
\end{proof}

\subsection{Canonical heat operators on Mumford curves and spectral gap}

We assume now that $\mathcal{U}$ is a verticial covering of $X$
by rational affinoid domains,
and let $G_\mathcal{U}$ be the corresponding reduction graph.
Recall that this means that 
\[
\mathcal{U}(K)=\mathset{U(K)\colon U\in \mathcal{U}}
\]
is a disjoint covering of $X(K)$ by holed discs in $K$.
Further, assume that $A$ is a $\mathcal{U}(K)$-matrix which is at the same time an adjcacency matrix of $G_{\mathcal{U}}$.
\smallskip
The operator
\[
\mathcal{D}_{\mathcal{U}(K),\Gamma}^\alpha
=\mathcal{D}_{A,\Gamma}^\alpha
\]
on $L^2_s(\Omega)^\Gamma$ is called \emph{canonical}, and the heat equation
\begin{align}\label{canonicalHE}
\frac{\partial}{\partial t}u(t,x)=\epsilon\mathcal{D}_{\mathcal{U}(K)}^\alpha u(t,x)=0
\end{align}
is a \emph{canonical heat equation} on $X$.

\smallskip
By taking a lift of a maximal subtree of $G_{\mathcal{U}}$ into the Bruhat-Tits tree for $K$, we identify $\mathcal{U}$ with a verticial covering of a fundamental domain in $\Omega$ which is clopen compact.

\smallskip
We now look at  the following decomposition of $L^2(\Omega)_s$ 
into eigenspaces of the canonical Taibleson-Vladimirov operators on $X$ used in
(\ref{canonicalHE}). Let
\[
L^2_{\mathcal{U}(K)}(\Omega)^\Gamma
\]
denote the space spanned by the $\Gamma$-invariant $\mathcal{C}$-wavelets, where $\mathcal{C}$ is as in Theorem \ref{invariantEigenBasis}.

\begin{thm}\label{InvDecomposition}
It holds true that
\[
L^2_s(\Omega)^\Gamma
\cong L^2_{\mathcal{U}(K)}(\Omega)^\Gamma
\oplus A(G_{\mathcal{U}},\mathds{C})/c(G_{\mathcal{U}},\mathds{C})
\oplus\mathds{C}
\]
\end{thm}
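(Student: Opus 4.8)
The plan is to mirror the proof of the non-invariant decomposition (the Corollary following Lemma \ref{immediateL2s}), replacing every object by its $\Gamma$-invariant counterpart and observing that the relevant graph is now the finite reduction graph $G_{\mathcal{U}}$. First I would invoke Theorem \ref{invariantEigenBasis}: it already furnishes an orthonormal basis of $L^2_s(\Omega)^\Gamma$ consisting of the $\Gamma$-invariant wavelets $\psi^\Gamma_{j,U}$ together with the functions $e_\Gamma$ built from eigenvectors $e=(e_U)$ of the Laplacian of $B^\alpha$. By construction the closed span of the invariant wavelets is precisely $L^2_{\mathcal{U}(K)}(\Omega)^\Gamma$, so the theorem reduces to identifying the closed span $E$ of the functions $e_\Gamma$ --- equivalently, the orthogonal complement of $L^2_{\mathcal{U}(K)}(\Omega)^\Gamma$ --- with $A(G_{\mathcal{U}},\mathds{C})/c(G_{\mathcal{U}},\mathds{C})\oplus\mathds{C}$.

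Next I would set up the invariant analogue of the map $\sigma$. Each $\Gamma$-invariant function is determined by its restriction to the fundamental domain $\mathscr{F}$, and the map sending such a function to the vector $\big(\int_U f(x)\,dx\big)_{U\in\mathcal{U}}$ identifies $E$ with $H(G_{\mathcal{U}},\mathds{C})$, since $V(G_{\mathcal{U}})=\mathcal{U}$ and since the function $e_\Gamma$ attached to $(e_U)$ is the $\Gamma$-periodic extension of $\sum_U\mu(U)^{-1}e_U\,\Omega(x\in U)$. The essential point is that $\Gamma$-invariance collapses the values on the orbit $\{\gamma U:\gamma\in\Gamma\}$ to a single value indexed by the vertex $U$ of the quotient graph $G_{\mathcal{U}}$; this is exactly why the finite reduction graph, rather than its universal cover, governs the decomposition. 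The orthogonal-complement statement (the invariant version of Lemma \ref{decompU}) then follows as in Theorem \ref{TVEigen}, whose argument has already been declared to carry over: any invariant $f$ orthogonal to every invariant wavelet must be constant on each $\gamma U$, hence lies in $E$, while the $e_\Gamma$ are manifestly orthogonal to all wavelets by Lemma \ref{waveletInt1}.

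Finally I would apply the exact sequence (\ref{exactSeq}) with $G=G_{\mathcal{U}}$. Since the reduction graph of a Mumford curve is connected, its zeroth Betti number satisfies $b_0(G_{\mathcal{U}})=1$, so the sequence yields the splitting $H(G_{\mathcal{U}},\mathds{C})\cong A(G_{\mathcal{U}},\mathds{C})/c(G_{\mathcal{U}},\mathds{C})\oplus\mathds{C}$. Combining this with $E\cong H(G_{\mathcal{U}},\mathds{C})$ and with the orthogonal splitting $L^2_s(\Omega)^\Gamma=L^2_{\mathcal{U}(K)}(\Omega)^\Gamma\oplus E$ gives the claimed isomorphism.

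I expect the main obstacle to be the convergence and well-definedness bookkeeping for the weighted inner product $\langle\cdot,\cdot\rangle_s$: one must confirm that the identification of $E$ with the finite-dimensional space $H(G_{\mathcal{U}},\mathds{C})$ is compatible with the weighting $m_{\gamma,U}^{-s}$, i.e.\ that each $e_\Gamma$ genuinely lies in $L^2_s(\Omega)^\Gamma$ and that the orthogonal-complement argument respects the weighted pairing rather than the naive one. This is controlled by Lemma \ref{absderivative} and the finiteness of $\mathcal{U}$, so I anticipate the difficulty to be technical rather than conceptual.
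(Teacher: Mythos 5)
Your proposal is correct and follows essentially the same route as the paper: a surjection from $L^2_s(\Omega)^\Gamma$ onto $H(G_{\mathcal{U}},\mathds{C})$ with kernel the invariant wavelet space $L^2_{\mathcal{U}(K)}(\Omega)^\Gamma$, combined with the exact sequence (\ref{exactSeq}) and $b_0(G_{\mathcal{U}})=1$ by connectedness of the reduction graph. The only cosmetic difference is that you define $\sigma$ by restriction to the fundamental domain while the paper uses the weighted orbit sum $f\mapsto\bigl(U\mapsto\sum_{\gamma\in\Gamma}\int_{\gamma U,s}f(x)\,dx\bigr)$; for $\Gamma$-invariant functions these agree up to a positive scalar in each coordinate, so kernel and image identifications coincide.
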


\begin{proof}
This follows in the same manner as in the end of Section \ref{sec:wavelets}, by observing that
\[
L^2_s(\Omega)^\Gamma/ L^2_{\mathcal{U}(K)}(\Omega)^\Gamma
\cong H(G_{\mathcal{U}},\mathds{C})
\]
due to the surjective linear map
\[
\sigma\colon L^2_s(\Omega)^\Gamma\to H(G_{\mathcal{U}},\mathds{C}),
\;f\mapsto
\left(U\mapsto\sum\limits_{\gamma\in\Gamma}
\int_{\gamma U,s}f(x)\,dx\right)
\]
As $G_{\mathcal{U}}$ is connected, we have
\[
\ker(\phi\circ\sigma)=L_0^2(\Omega)^\Gamma
\]
where $\phi$ is from the exact sequence 
\begin{align}\label{ExactSeq}
\xymatrix{
0\ar[r]&c(G_{\mathcal{U}},\mathds{C})\ar[r]&
A(G_{\mathcal{U}},\mathds{C})\ar[r]^d
&H(G_{\mathcal{U}},\mathds{C})\ar[r]^{\quad\phi}
&\mathds{C}\ar[r]&0
}
\end{align}
and 
\[
L^2_0(\Omega)^\Gamma
=\mathset{f\in L^2_s(\Omega)^\Gamma\colon
\int_{\Omega,s}f(x)\,dx=0}
\]
This means that
\[
L^2_s(\Omega)^\Gamma/L_0^2(\Omega)^\Gamma
\cong\mathds{C}
\]
And the exact sequence also yields that
\[
H(G_{\mathcal{U}},\mathds{C})\cong
A(G_{\mathcal{U}},\mathds{C})/c(G_{\mathcal{U}},\mathds{C})
\oplus\mathds{C}
\]
where the first direct summand on the right hand side equals
\[
A(G_{\mathcal{U}},\mathds{C})/c(G_{\mathcal{U}},\mathds{C})\cong 
d(A(G_{\mathcal{U}},\mathds{C})\cong
L_0^2(\Omega)^\Gamma/L^2_{\mathcal{U}(K)}(\Omega)^\Gamma
\]
Also, we have
\[
\ker(\sigma)=L^2_{\mathcal{U}(K)}(\Omega)^\Gamma
\]
so that
\[
H(G_{\mathcal{U}},\mathds{C})
\cong L^2_s(\Omega)^\Gamma/L^2_{\mathcal{U}(K)}(\Omega)^\Gamma
\]
This proves the assertion.
\end{proof}

\begin{rem}
The exact sequence (\ref{ExactSeq}) appears in the context of Mumford curves in
\cite{vdP1992}, where Theta functions for discontinuous groups are studied.
\end{rem}

The decomposition of $L^2(\Omega)^\Gamma$ as in
Theorem \ref{InvDecomposition} together with
Theorem \ref{invariantEigenBasis} applied to an adjacency matrix $A$ of the reduction graph $G_{\mathcal{U}}$ of a Mumford curve $X$ shows that the non-zero spectrum of $\mathcal{D}_{A,\Gamma}^\alpha$ 
consists of the negative eigenvalues of the Laplacian associated with $A$
and the quantities
\[
-\deg_{A,\Gamma}^\alpha(U)
\]
for $U\in\mathcal{U}$. We will now prove that the spectral gap of $\mathcal{D}_{A,\Gamma}^\alpha$ can become arbitrarily small if $X$ varies over all Mumford curves having the same combinatorial reduction graph.

\begin{ass}\label{assumption}
In order to eliminate the dependence of the spectrum
on the volume of the fundamental domain $\mathscr{F}$, we assume that for a given Mumford curve $X$, it is a holed disc inside $O_K$ with maximal possible volume under such fundamental domains.
\end{ass}

\begin{lem}\label{degreeTVGamma}
Let $X=\Omega/\Gamma$ be a Mumford curve of genus $g\ge 1$
with fixed reduction graph $G$ which is assumed to be simple.
Let $\alpha\in\mathds{R}$, and $s>1$. Let $A=(A_{UV})$ be the combinatorial adjacency matrix of $G$. 
Consider the sequence 
\[
p^{\frac{\beta_{\gamma,V}}{e}}=\max\limits_V\absolute{\gamma'}_K,\quad\gamma\in\Gamma
\]
in its natural ordering, and let $\lambda_V\ge 1$ be the smallest gap within the sequence $\beta_{\gamma,V}\in\mathds{Z}$. Then, 
it holds true that
\[
\deg_{A,\Gamma}^\alpha(U)
\le\deg_{G}(U)\max\limits_{A_{UV}\neq0}
\mathset{\mu(V)d(U,V)^\alpha}C(s,\lambda_V)
\]
with
\[
C(s,\lambda_V)=
\left(\frac{1}{1-p^{-\frac{\lambda_V}{e}}}
+\frac{p^{-\frac{\lambda_V(s-1)}{e}}}{1-p^{-\frac{\lambda_V(s-1)}{e}}}\right)
\]
for $U\in\mathcal{U}$.
\end{lem}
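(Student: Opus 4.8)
The plan is to unwind the definition of $\deg_{A,\Gamma}^\alpha(U)$, reduce each weighted orbit integral to an integral over the fundamental piece $V$ via the substitution $z=\delta^{-1}y$, and then dominate the resulting sum over $\Gamma$ by two geometric series governed by the gap $\lambda_V$.

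First I would fix $x\in U$ and, for each neighbour $V$ with $A_{UV}\neq 0$, rewrite
\[
\int_{\delta V,s}\absolute{x-\delta^{-1}y}_K^\alpha\,dy
=m_{\delta,V}^{-s}\int_V\absolute{x-z}_K^\alpha\absolute{\delta'z}_K\,dz,
\]
using the transformation rule built into the weighted integral $\int_{\cdot,s}$. Since $U\neq V$, Lemma \ref{absConst} lets me extract $\absolute{x-z}_K^\alpha=d(U,V)^\alpha$ as a constant, and bounding the remaining integrand by its supremum gives
\[
\int_V\absolute{\delta'z}_K\,dz\le\mu(V)\max\limits_V\absolute{\delta'}_K=\mu(V)\,p^{\beta_{\delta,V}/e}.
\]

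The heart of the argument is the sum over $\delta\in\Gamma$. Writing $m_{\delta,V}=\max\mathset{1,p^{\beta_{\delta,V}/e}}$, I split the summands according to the sign of $\beta_{\delta,V}$: for $\beta_{\delta,V}\le0$ the weight $m_{\delta,V}^{-s}\max_V\absolute{\delta'}_K$ reduces to $p^{\beta_{\delta,V}/e}$, whereas for $\beta_{\delta,V}>0$ it becomes $p^{(1-s)\beta_{\delta,V}/e}$. By Lemma \ref{absderivative} each value of $\beta_{\delta,V}$ is attained only finitely often, and by hypothesis the distinct attained values are spaced at least $\lambda_V$ apart, with $\beta=0$ attained (by $\delta=\mathrm{id}$). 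Hence the non-positive part is dominated term-by-term by $\sum_{k\ge0}p^{-k\lambda_V/e}$ and the positive part by $\sum_{k\ge1}p^{-(s-1)k\lambda_V/e}$; these converge precisely because $\lambda_V\ge1$ and $s>1$, and their sum is exactly $C(s,\lambda_V)$. Thus the inner orbit sum over $\delta$ is at most $\mu(V)\,d(U,V)^\alpha\,C(s,\lambda_V)$.

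Finally I would sum over the neighbours $V$. Because $A$ is the combinatorial adjacency matrix of a simple graph, $A_{UV}\in\mathset{0,1}$ and there are exactly $\deg_G(U)$ neighbours, so bounding each summand by the maximum over neighbours yields the claimed estimate. The main obstacle is the combinatorial control of the $\Gamma$-sum: one must ensure that the derivative magnitudes $\max_V\absolute{\gamma'}_K$ do not accumulate and are not attained with runaway multiplicity, so that grouping by the level sets of $\beta_{\cdot,V}$ and invoking the spacing $\lambda_V$ genuinely reduces the series to a geometric one. Here Lemma \ref{absderivative} rules out infinite multiplicity on each level, while the discreteness of the Schottky group $\Gamma$ is what makes the gap $\lambda_V\ge1$ a meaningful (integer) lower bound on the spacing.
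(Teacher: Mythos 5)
Your proposal is correct and follows essentially the same route as the paper's proof: unwind $\deg_{A,\Gamma}^\alpha(U)$, pull the weighted orbit integrals back to $V$ via the transformation rule, use Lemma \ref{absConst} to extract $\mu(V)d(U,V)^\alpha$, and dominate the resulting series $\sum_{\gamma\in\Gamma}m_{\gamma,V}^{-s}\max_V\absolute{\gamma'}_K$ by the two geometric series (exponents $\lambda_V/e$ and $\lambda_V(s-1)/e$) whose sum is $C(s,\lambda_V)$, before bounding the sum over neighbours by $\deg_G(U)$ times the maximum. If anything, your discussion of the level sets of $\beta_{\cdot,V}$ via Lemma \ref{absderivative} is slightly more explicit than the paper's, which simply writes the summands as $p^{-\nu_{V,n}/e}$ and $p^{-\nu_{V,n}(s-1)/e}$ along an increasing sequence $\nu_{V,n}$ with $\nu_{V,0}=0$.
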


\begin{proof}
We will give an upper bound for $\deg_{A,\Gamma}^\alpha(U)$
for $U\in\mathcal{U}$ as follows, where $d_K(U,V)$ denotes the $\pi$-adic distance between the sets $U$ and $V$:
\begin{align*}
\deg_{A,\Gamma}^\alpha(U)
&=\sum\limits_{V\in\mathcal{U}}A_{UV}\sum\limits_{\gamma\in\Gamma}
\int_{\gamma V,s}\absolute{x-\gamma^{-1}y}_K^\alpha\,dy
\\
&=\sum\limits_{V\in\mathcal{U}}
A_{UV}\sum\limits_{\gamma\in\Gamma}
m_{\gamma,V}^{-s}\int_V\absolute{x-y}_K^\alpha\absolute{\gamma'y}_K\,dy
\\
&\le \sum\limits_{V\in\mathcal{U}}A_{UV}\mu(V)d_K(U,V)^\alpha
\sum\limits_{\gamma\in\Gamma}
\frac{\max\limits_V\absolute{\gamma'}_K}{
\max\mathset{1,\max\limits_V\absolute{\gamma'}_K}^{s}}
\end{align*}
Now, in
\[
S_V(s)=\sum\limits_{\gamma\in\Gamma}\frac{\max\limits_V\absolute{\gamma'}_K}{\max\mathset{1,\max\limits_V\absolute{\gamma'}_K}^s}
\]
each numerator is a  power of $p^f$, and the denominator is a  power of $p^{sf}$, so each summand can be written like this:
\[
\begin{cases}
p^{-\frac{\nu_{V,n}}{e}},&m_{\gamma,V}=1\\
p^{-\frac{\nu_{V,n}(s-1)}{e}},&m_{\gamma,V}>1
\end{cases}
\]
and $\nu_{V,n}$ is an incresasing sequence of natural numbers
with $\nu_{V,0}=0$.
Hence,
\[
S_V(s)=\sum\limits_{n\ge 0}p^{-\frac{\nu_{V,n}}{e}}
+\sum\limits_{n>0}p^{-\frac{\nu_{V,n}(s-1)}{e}}
\]
If $\lambda_V$ is the minimal gap in the sequence 
$\nu_{V,n}=\beta_{\gamma,V}$ for suitable $\gamma\in\Gamma$, then we obtain
\begin{align*}
S_V(s)&\le
\sum\limits_{n\ge 0}p^{-\frac{n\lambda_V}{e}}
+\sum\limits_{n>0}p^{-\frac{n\lambda_V(s-1)}{e}}
=C(s,\lambda_V)
\end{align*}
Hence, we obtain
\begin{align*}
\deg_{A,\Gamma}^\alpha(U)
\le\deg_{G}(U)\max\limits_{A_{UV}\neq0}
\mathset{\mu(V)d(U,V)^\alpha}
C(s,\lambda_V)
\end{align*}
as asserted.
\end{proof}

A graph is called \emph{stable} if every vertex not attached to a loop-edge
is attached to at least three edges.
By the stable reduction Theorem \cite{DM1969}, among the possible intersection graphs of the special fibre of a given projective algebraic curve, the stable intersection graph is unique.

\begin{cor}
Let $\alpha\in\mathds{R}$, $s>1$, and let  $M_g(G)$ be the set of isomorphism classes of Mumford curves of genus $g\ge1$ defined over $K$ having fixed stable reduction graph $G$. Then, under Assumption \ref{assumption},  for every $\epsilon >0$, $M_G$ contains 
a curve $X=\Omega/\Gamma$ such that the spectral gap of the
operator
$\mathcal{D}_{A(G),\Gamma}^\alpha$ is smaller than $\epsilon$, where $A(G)$ is the combinatorial (symmetric) adjacency matrix of $G$.
Furthermore, if $X$ is any curve from $M_g$, then under these assumptions, there exists a finite field extension $L$ such that
$X(L)$  satisfies
\[
\lim_{p\to \infty}\deg_{A,\Gamma}^\alpha(U)>0
\]
for some $U\in\mathcal{U}$ and a verticial covering $\mathcal{U}$ of $X(L)$.
\end{cor}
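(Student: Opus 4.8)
The plan is to read off the spectrum from Theorem~\ref{invariantEigenBasis} and Theorem~\ref{InvDecomposition}: the non-zero eigenvalues of $\mathcal{D}_{A,\Gamma}^\alpha$ are the negative eigenvalues of the Laplacian of $B^\alpha$ together with the wavelet eigenvalues $-\deg_{A,\Gamma}^\alpha(U)$, $U\in\mathcal{U}$. Since the spectral gap is the least modulus of a non-zero eigenvalue, for the first assertion it is enough to exhibit, for each $\epsilon>0$, a curve in $M_g(G)$ with one wavelet eigenvalue $-\deg_{A,\Gamma}^\alpha(U)$ that is non-zero but of modulus $<\epsilon$; the gap is then automatically $\le\deg_{A,\Gamma}^\alpha(U)<\epsilon$. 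Note that $\deg_{A,\Gamma}^\alpha(U)$ is always strictly positive, since its $\gamma=\mathrm{id}$ contribution $\sum_{V}A_{UV}\mu(V)d(U,V)^\alpha$ is positive whenever $U$ has a neighbour in $G$, so such an eigenvalue genuinely belongs to the gap.

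First I would feed the estimate of Lemma~\ref{degreeTVGamma} into a degeneration within $M_g(G)$. Fixing the combinatorial type $G$ freezes $\deg_G(U)$, and the factor $C(s,\lambda_V)$ stays bounded (it is at most $C(s,1)$ and tends to $1$ as $\lambda_V\to\infty$), so the whole bound is controlled by $\max_{A_{UV}\neq0}\{\mu(V)d(U,V)^\alpha\}$. The remaining freedom in $M_g(G)$ is the metric of the curve, i.e.\ the depths of the annuli between patches, equivalently the valuations of the multipliers of the Schottky generators. Driving these so that every neighbour $V$ of a fixed vertex $U$ shrinks, $\mu(V)\to0$, while keeping the distances $d(U,V)$ bounded away from $0$ so that $d(U,V)^\alpha$ stays bounded, forces $\deg_{A,\Gamma}^\alpha(U)\to0$; renormalising by Assumption~\ref{assumption} keeps $\mathscr{F}$ of maximal volume in $O_K$ throughout, so the statement is not trivialised by a global rescaling. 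The cleanest instance is the Tate elliptic curve with multiplier $q$: the fundamental annulus $\mathset{\absolute{q}_K\le\absolute{x}_K\le1}$ carries the two patches $\mathset{\absolute{x}_K=1}$ and $\mathset{\absolute{x}_K=\absolute{q}_K}$, and as $\absolute{q}_K\to0$ the measure of the second, hence the corresponding wavelet eigenvalue, tends to $0$.

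For the second assertion I would split $\deg_{A,\Gamma}^\alpha(U)$ into its $\gamma=\mathrm{id}$ part, the local degree $\sum_{V}A_{UV}\mu(V)d(U,V)^\alpha>0$, and the tail over $\gamma\neq\mathrm{id}$. As in the proof of Lemma~\ref{degreeTVGamma} the tail is dominated by $\sum_{n>0}\left(p^{-n\lambda_V/e}+p^{-n\lambda_V(s-1)/e}\right)$, each term of which tends to $0$ as $p\to\infty$, so the tail disappears in the limit. The role of the finite extension $L$ is twofold: to guarantee that the patches of the verticial covering of $X(L)$ are non-empty of positive measure (recall that annuli carry no $K$-rational points), and to adjust $e$ and $f$ so that for some neighbour $V$ the quantity $\mu(V)d(U,V)^\alpha$ is of order $1$ uniformly in $p$. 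Then $\lim_{p\to\infty}\deg_{A,\Gamma}^\alpha(U)$ equals this positive local degree, which proves the claim.

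The hard part will be the geometric bookkeeping that keeps these limits inside $M_g(G)$. For the first assertion I must check that lengthening the annuli alters only the metric and not the combinatorial stable type, and that a configuration in which all neighbours of a chosen $U$ have arbitrarily small measure is actually realised by a Schottky group with reduction graph $G$ under Assumption~\ref{assumption}; for the second, that $L$ can be chosen to normalise the local degree to order $1$ simultaneously with letting the residue characteristic grow. These realisability and normalisation points are where the moduli of Mumford curves with fixed reduction graph, and the freedom in the ramification data $e,f$, are genuinely used.
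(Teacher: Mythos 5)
Your proposal follows essentially the same route as the paper's proof: for the first assertion it likewise feeds the bound of Lemma \ref{degreeTVGamma} into a degeneration within $M_g(G)$ that lengthens the edges of the reduction tree $T_X$ so that, under Assumption \ref{assumption}, some neighbour $V$ of a chosen $U$ has $\mu(V)\to 0$ (your Tate-curve instance matches the paper's later explicit computation), and for the second it isolates the $\gamma=\id$ term of $\deg_{A,\Gamma}^\alpha(U)$, lets the tail over $\gamma\neq\id$ vanish as $p\to\infty$, and uses a finite (ramified) extension $L$ to normalise $\mu(V)d(U,V)^\alpha$ to order one. Your explicit check that the wavelet eigenvalue $-\deg_{A,\Gamma}^\alpha(U)$ is genuinely non-zero (so that it really witnesses the spectral gap) is a small point the paper leaves implicit, but otherwise the argument coincides with the published one.
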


\begin{proof}
By Lemma \ref{degreeTVGamma}, there is an upper bound of 
$\deg_{A,\Gamma}^\alpha$ which is a multiple of
\[
\Lambda_X(U)=\max\limits_{A_{UV}\neq0}\mathset{\mu(V)d(U,V)^\alpha}
\]
For each Mumford curve in $M_g(G)$, choose the fundamental domain
$\mathscr{F}_X$ such that the reduction trees of $\mathscr{F}_X$ are isomorphic. For the vertical covering $\mathcal{U}_X$
of $\mathscr{F}_X$, let 
$U\in\mathcal{U}_X$ such that $\Lambda_X(U)$ is minimal. The reduction of the fundamental domain $\mathscr{F}_X$ being a finite subtree $T_X$ of the Bruhat-Tits tree, we may now vary $X$ within $M_g(G)$,
such that the edges of $T_X$ become longer beyond any bound. Under Assumption \ref{assumption}, this implies that there exists $U\in\mathcal{U}$ having a neighbour $V\in\mathcal{U}$ such that
$\mu(V)$ becomes  smaller than any given bound. This implies that
$\Lambda_X(U)$ and also $\deg_{A,\Gamma}^\alpha(U)$ becomes smaller than any given bound. This proves the assertion about the spectral gap, as $-\deg_{A,\Gamma}^\alpha(U)$ is an eigenvalue of $\mathcal{D}_{A,\Gamma}^\alpha$ by Theorem \ref{invariantEigenBasis}.

\smallskip
As to the second assertion, we will look at 
the term in $\deg_{A,\Gamma}^\alpha(U)$ corresponding to $\gamma=\id$. We need $U\in\mathcal{U}$ such that there exists an adjacent $V\in\Star(U)$ such that
$d_p(U,V)=1$ and $\mu(V)=1-\ell p^{-\frac{1}{e_{L}}}$ for some $\ell\in\mathset{1,\dots,p^{f_L}-1}$, where $e_L$ and $f_L$ are the ramification index and the residue field degree of $L/\mathds{Q}_p$, respectively. Under Assumption \ref{assumption} and after a possible finite ramified extension $L$ of $K$, such $U\in\mathcal{U}$ exist.
The isomorphism class of $X$ contains a curve such that $V$ is obtained by intersecting the sphere $\absolute{x}_L=1$ with other spheres of radius $1$. This implies that
\[
\int_V\absolute{x-y}_L^\alpha\absolute{y}_L\,dy=\mu(V)=1-\ell p^{-f}
\]
and as $m_{\id,V}=1$, we have now proven that
\[
\deg_{A,q}^\alpha(U)=C_U+O\left(p^{-nf}\right)
\]
for some natural $n\ge 1$, and where $C_U>1$.
For this $U$, we clearly have
\[
\lim_{p\to\infty}\deg_{A,\Gamma}^\alpha(U)=C_U>0
\]
as asserted.
\end{proof}

\subsection{Degree eigenvalues of canonical heat operators on Tate curves}

Here, we will exhibit an explicit calculation of 
\[
\deg_{A,q}^\alpha(U)=\deg_{A,\Gamma}^\alpha(U)
\]
under Assumption \ref{assumption} for Tate curves.
Let
$X=K^\times/\langle q\rangle$ with $\absolute{q}=p^{-\frac{n+1}{e}}$
be a Tate curve. Let $\mathcal{U}$ be the covering of the fundamental domain, which is an annulus, as follows:
\[
\mathcal{U}=\mathset{U_0,\dots,U_n}
\]
and
\[
U_i=\mathset{\absolute{x}_K=p^{-\frac{i}{e}}},\quad
i\equiv 0,\dots,n+1
\]
with $n\ge 2$. The latter condition guarantees that the reduction graph is a simple graph.

\begin{thm}
It holds true that
\[
\deg_{A,q}^\alpha(U_i)
=C_\alpha(i)\cdot S_q(s)
\]
with
\[
C_\alpha(i)=\begin{cases}
\left(p^{-i\alpha}p^{-(i+1)f)}+p^{-(i-1)\alpha}p^{-(i-2)f}\right)\left(1-p^{-f}\right),&i=1,\dots,n-1\\
\left(1+p^{-(n-1)\alpha}p^{-(n-2)f}\right)\left(1-p^{-f}\right),&i=n
\\
\left(1+p^{-(n-1)f}\right)\left(1-p^{-f}\right),&i=0
\end{cases}
\]
and
\[
S_q(s)=\frac{1}{1-\absolute{q}}+\frac{\absolute{q}^{s-1}}{1-\absolute{q}^{s-1}}
\]
where $\alpha\in\mathds{R}$ and $s>1$.
\end{thm}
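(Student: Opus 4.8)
The plan is to exploit the fact that for a Tate curve the Schottky group $\Gamma=\langle q\rangle$ acts on $K^\times$ purely by scaling, so that the weighted group sum decouples completely from the geometry of the individual spheres $U_i$. I would start from the expression for the degree established in the proof of Lemma \ref{degreeTVGamma},
\[
\deg_{A,\Gamma}^\alpha(U_i)=\sum_{V\in\mathcal{U}}A_{U_iV}\sum_{\gamma\in\Gamma}m_{\gamma,V}^{-s}\int_V|x-y|_K^\alpha\,|\gamma'y|_K\,dy,\qquad x\in U_i.
\]
Every $\gamma\in\Gamma$ is a power $q^m$ acting by $y\mapsto q^m y$, so its derivative is the constant $q^m$ and hence $|\gamma'y|_K=|q|^m$ while $m_{\gamma,V}=\max\{1,|q|^m\}$, both independent of $V$ and of $y\in V$. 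Combined with Lemma \ref{absConst}, which makes $|x-y|_K$ equal to a constant $d(U_i,V)$ on $U_i\times V$, the integral becomes $d(U_i,V)^\alpha\mu(V)\,|q|^m$, and the whole group sum factors out as the scalar
\[
S_q(s)=\sum_{m\in\mathds{Z}}\frac{|q|^m}{\max\{1,|q|^m\}^{s}}.
\]
Thus $\deg_{A,q}^\alpha(U_i)=C_\alpha(i)\,S_q(s)$ with $C_\alpha(i)=\sum_{V}A_{U_iV}\,d(U_i,V)^\alpha\mu(V)$, and the problem splits into evaluating the two factors independently.

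For $S_q(s)$ I would compute directly. Since $|q|=p^{-(n+1)/e}<1$, the inequality $|q|^m\le 1$ holds exactly when $m\ge 0$; splitting the sum at this point and reindexing the negative tail by $k=-m$ produces two geometric series,
\[
S_q(s)=\sum_{m\ge 0}|q|^m+\sum_{k\ge 1}|q|^{k(s-1)}=\frac{1}{1-|q|}+\frac{|q|^{s-1}}{1-|q|^{s-1}},
\]
which converges precisely because $s>1$ forces $|q|^{s-1}<1$, and this is the claimed $S_q(s)$.

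For the geometric factor $C_\alpha(i)$ I would use that the reduction graph of the Tate curve is the cycle on $U_0,\dots,U_n$ (the hypothesis $n\ge 2$ guaranteeing it is simple), so each vertex has exactly two neighbours. The sphere $U_j=\{|x|_K=p^{-j/e}\}$ has measure $\mu(U_j)=p^{-jf}(1-p^{-f})$, and for two nested spheres $|x-y|_K$ equals the radius of the larger one; reading off these two constants for each of the two neighbours and summing $\sum_V d(U_i,V)^\alpha\mu(V)$ gives $C_\alpha(i)$, after which multiplication by $S_q(s)$ yields the theorem. The normalisation $U_0=\{|x|_K=1\}$ is exactly the maximal-volume fundamental domain singled out by Assumption \ref{assumption}.

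The delicate point, and the main obstacle, is the bookkeeping at the two boundary vertices $U_0$ and $U_n$. For an interior index $1\le i\le n-1$ both neighbours $U_{i-1},U_{i+1}$ lie in the chosen window and the distances $p^{-(i-1)/e}$, $p^{-i/e}$ and measures are read off immediately, giving the two-term expression for $C_\alpha(i)$. At $U_0$ and $U_n$, however, one of the two cycle-edges is the edge closed up by the Tate identification $qU_0=U_{n+1}\equiv U_0$, so the relevant neighbour is the wrap-around vertex rather than an adjacent sphere of the window; here one must determine carefully which radius is the larger one and hence which distance and which measure enter, which is what produces the separate formulas for $C_\alpha(0)$ and $C_\alpha(n)$. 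Once the three cases $i=0$, $1\le i\le n-1$ and $i=n$ are separated and the constant distances are identified correctly via Lemma \ref{absConst}, the remaining evaluation is routine arithmetic with powers of $p^f$.
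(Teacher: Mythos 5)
Your factorisation is exactly the paper's proof: write $\deg_{A,q}^\alpha(U_i)$ as a sum over $\gamma=q^m$, pull each integral back to $V$ by the substitution $y=q^mz$ so that the constant derivative contributes $\absolute{q}^m$, invoke Lemma \ref{absConst} to replace $\absolute{x-z}_K^\alpha$ by the constant $d(U_i,V)^\alpha$, and split the resulting scalar group sum $\sum_{m\in\mathds{Z}}m_{q^m}^{-s}\absolute{q}^m$ at $m\ge0$ into the two geometric series making up $S_q(s)$. Your first two paragraphs, and your identification $C_\alpha(i)=\sum_V A_{U_iV}\,d(U_i,V)^\alpha\mu(V)$, coincide with the displayed factor $\mu(U_{i+1})d(U_i,U_{i+1})^\alpha+\mu(U_{i-1})d(U_i,U_{i-1})^\alpha$ in the paper's proof, which likewise leaves the case-by-case evaluation of that factor implicit.

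The one genuine flaw is that your ``delicate point'' resolves the boundary cases backwards. After the substitution $y=q^mz$, the kernel compares $x$ with $\delta^{-1}y=z\in V\subset\mathscr{F}$, so the Tate identification is \emph{already wholly absorbed} into the scalar $S_q(s)$: in $C_\alpha(i)$ every neighbour $V$, including the cycle-closing neighbour $U_0$ of $U_n$, enters as a literal subset of the annulus $\mathscr{F}$, with $d(U_n,U_0)=1$ (the larger radius) and $\mu(U_0)=1-p^{-f}$. Taking instead ``the wrap-around vertex'' $qU_0$, with $d(U_n,qU_0)=p^{-n/e}$ and $\mu(qU_0)=p^{-(n+1)f}(1-p^{-f})$, as your closing paragraph suggests, would double-count the weight $\absolute{q}$ that was already factored into $S_q(s)$, and is refuted by the paper's own $n=2$ example, where the leading term $1$ in $\deg_{A,q}^1(U_2)\approx(1+p^{-2})\frac{1-p^{-1}}{1-p^{-3}}$ comes precisely from distance $1$ between $U_2$ and $U_0$. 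You should also be aware that carrying out your (correct) window-representative computation gives $C_\alpha(0)=\left(p^{-f}+p^{-nf}\right)\left(1-p^{-f}\right)$ and $C_\alpha(n)=\left(1+p^{-(n-1)\alpha/e}p^{-(n-1)f}\right)\left(1-p^{-f}\right)$, which match the paper's worked example but not the theorem's displayed $C_\alpha(i)$; the exponents $(i-2)f$, $(n-2)f$, the missing factors of $e$ in the $\alpha$-exponents, and the $i=0$ case in the statement appear to contain typos, so do not contort the boundary bookkeeping to reproduce them literally.
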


By Theorem \ref{invariantEigenBasis}, this gives an explicit formula for the eigenvalues corresponding to  the  wavelet eigenfunctions of $\mathcal{D}_{A,\Gamma}^\alpha$, which are the negatives of those degrees.

\begin{proof}
We have, where natural indices are read modulo $n+1$,
\begin{align*}
\deg_{A,q}^\alpha(U_i)
&=\sum\limits_{n\in\mathds{Z}}
\int_{q^n U_{i+1},s}\absolute{x-q^{-n}y}_K^\alpha\,dy
+\int_{q^nU_{i-1},s}\absolute{x-q^{-n}y}_K^\alpha\,dy
\\
&=\left(\mu(U_{i+1})d(U_i,U_{i+1})^\alpha
+\mu(U_{i-1})d(U_i,U_{i-1})^\alpha\right)
\sum\limits_{n\in\mathds{Z}}m_{q^n}^{-s}\absolute{q}^n_K
\end{align*}
with
\[
m_{q^n}=\max\mathset{1,\absolute{q}^n}
\]
The factor before the sum equals
\[
C_\alpha(i)
\]
and the sum equals
\[
\sum\limits_{n\ge 0}\absolute{q}^n+\sum\limits_{n>0}\absolute{q}^{n(s-1)}=S_q(s)
\]
as asserted.
\end{proof}

\begin{exa}
We will look at a Tate curve over $\mathds{Q}_p$.
Let $n=2$ in the setting of this subsection.
The matrix $B^\alpha$ for $\alpha=1$ is
\[
B^1=\begin{pmatrix}
0&1&1\\
1&0&p^{-1}\\
1&p^{-1}&0
\end{pmatrix}
\]
Its non-negative Laplacian eigenvalues are
\[
\lambda_1=-3,\quad \lambda_2=-(1+2p^{-1})=-\frac{p+2}{p}
\]
If $s>>1$, we have
\begin{align*}
  \deg_{A,q}^1(U_0)&\approx \left(p^{-1}+p^{-2}\right)\cdot\frac{1-p^{-1}}{1-p^{-3}}
  =\frac{p^2-1}{p^3-1}
\\
\deg_{A,q}^1(U_1)&\approx
\left(p^{-3}+1\right)\cdot\frac{1-p^{-1}}{1-p^{-3}}
=\frac{p^4-p^3+p-1}{p^4-p}
\\
\deg_{A,q}^1(U_2)&\approx
\left(1+p^{-2}\right)\cdot\frac{1-p^{-1}}{1-p^{-3}}
=\frac{p^3-p^2+p-1}{p^3-1}
\end{align*}
Observe that $\lambda_2$ tends to one for $p\to\infty$.
The same holds true for $\deg_{A,q}^1(U_1)$ and $\deg_{A,q}^1(U_2)$.
However, $\deg_{A,q}^1(U_0)$ tends to zero for $p\to\infty$.
Furthermore, all degree values for $s>>0$  are smaller than one, whereas $-\lambda_2$ is larger than one.

\smallskip
The eigenvalues for $p=2,3,5$ are shown in Table \ref{table}.

\begin{table}[h]
\[
\begin{array}{|c|c|c|c|c|c|}\hline
&U_0&U_1&U_2&-\lambda_1&-\lambda_2\\\hline
\rule{0pt}{5mm}p=2&\frac37\approx0.429&\frac{9}{14}\approx0.643&\frac57\approx0.714&3&2
\\
\rule{0pt}{5mm}p=3&\frac{4}{13}\approx0.308
&\frac{28}{39}\approx0.718&\frac{10}{13}\approx0.792
&3&\frac53\approx1.667
\\
\rule[-3mm]{0pt}{8mm}p=5&\frac{6}{31}\approx0.194&\frac{126}{155}\approx0.813&\frac{26}{31}\approx0.839&3&\frac75=1.4
\\\hline
\end{array}
\]
\caption{The Taibleson-Vladimirov degrees for the first three primes, and $s>>1$, compared with  Laplacian eigenvalues $\lambda_1,\lambda_2$.}
\label{table}
\end{table}
\end{exa}

We conclude with the following two questions:

\begin{quest}
Is it always true that, in the case of a Mumford curve, the Laplacian spectral gap of the adjacency matrix $B^\alpha$ tends to one for $p\to\infty$?
\end{quest}

In our example, the spectral gap of $\mathcal{D}_{A,\Gamma}^\alpha$ equals the absolute value of a wavelet eigenvalue.

\begin{quest}
Does there exist a Mumford curve of genus $g\ge 1$ such that, under Assumption \ref{assumption} and with $A$ a combinatorial adjacency matrix, the spectral gap of $\mathcal{D}_{A,\Gamma}^\alpha$ equals the spectral gap of $B^\alpha$?
\end{quest}

\section*{Acknowledgements}

We are indebted to Wilson Z\'u\~{n}iga-Galindo for presenting the article \cite{Zuniga2020} at the p-adics.2019 conference in 
Covilh\~{a}, Portugal, and for further fruitful discussions towards attempts in constructing integral operators on $p$-adic manifolds, as well as for straightening out the nomenclature. Also David Weisbart is to be credited for inciting discussions about topics in $p$-adic analysis, where my attention was brought towards the major helpful idea towards constructing Z\'{u}\~{n}iga operators on Mumford curves, and also for suggestions for this article.

\bibliographystyle{plain}
\bibliography{biblio}

\begin{thebibliography}{10}

\bibitem{AK2010}
S.~Albeverio and S.V. Kozyrev.
\newblock Pseudodifferential $p$-adic vector fields and pseudodifferentiation
  of a composite $p$-adic function.
\newblock {\em $p$-Adic Numbers, Ultrametric Analysis and Applications},
  2(1):21--34, 2010.

\bibitem{GenDiffMg}
P.E. Bradley.
\newblock Generalised diffusion on moduli spaces of $p$-adic {Mumford} curves.
\newblock {\em $p$-Adic Numbers, Ultrametric Analysis and Applications},
  12(2):73--89, 2020.

\bibitem{pWaveGraph}
P.E. Bradley.
\newblock $p$-{Adic} wave equations on finite graphs and {$T_0$}-spaces.
\newblock In W.~Zuniga and B.~Toni, editors, {\em Advances in Non-Archimedean
  Analysis and Applications}, STEAM-H, pages 275--295. Springer, New York,
  2021.

\bibitem{CZ2018}
L.F. Chacón-Cortés and W.A. Zúñiga-Galindo.
\newblock Heat traces and spectral zeta functions for $p$-adic {Laplacians}.
\newblock {\em St. Petersburg Math. J.}, 29:529--544, 2018.

\bibitem{DM1969}
P.~Deligne and D.~Mumford.
\newblock The irreducibility of the space of curves of a given genus.
\newblock {\em Publ. Math. I.H.E.S.}, 36:79--109, 1969.

\bibitem{EK1986}
S.N. Ethier and T.G. Kurtz.
\newblock {\em Markov Processes - Characterization and Convergence}.
\newblock Wiley Series in Probability and Mathematical Statistics. John Wiley
  \& Sons, New York, 1986.

\bibitem{FP2004}
J.~Fresnel and M.~{van der Put}.
\newblock {\em Rigid Analytic Geometry and its Applications}, volume 218 of
  {\em Progress in Mathematics}.
\newblock Birkh\"auser, Boston, Mass., 2004.

\bibitem{GvP1980}
L.~Gerritzen and M.~{van der Put}.
\newblock {\em Schottky Groups and Mumford Curves}.
\newblock Lecture Notes in Mathematics, vol. 817. Springer, Heidelberg, New
  York, 1980.

\bibitem{XK2005}
A.Yu. Khrennikov and S.V. Kozyrev.
\newblock Pseudodifferential operators on ultrametric spaces and ultrametric
  wavelets.
\newblock {\em Izvestiya: Mathematics}, 69(5):989--1003, 2005.

\bibitem{Kozyrev2002}
S.V. Kozyrev.
\newblock Wavelet theory as $p$-adic spectral analysis.
\newblock {\em Izv. Math.}, 66(2):367--376, 2002.

\bibitem{Mumford1972}
D.~Mumford.
\newblock An analytic construction of degenerating curves over complete local
  rings.
\newblock {\em Compositio Mathematica}, 24(2):129--174, 1972.

\bibitem{Pazy1983}
A.~Pazy.
\newblock {\em Semigroups of Linear Operators and Applications to Partial
  Diﬀerential Equations}.
\newblock Applied Mathematical Sciences, vol. 44. Springer-Verlag, New York,
  1983.

\bibitem{Roquette1970}
P.~Roquette.
\newblock {\em Analytic theory of elliptic functions over local fields}.
\newblock Hamburger Math. Einzelschriften, Neue Folge, Heft 1. Vandenhoeck \&
  Ruprecht, G\"ottingen, 1970.

\bibitem{Taibleson1975}
M.H. Taibleson.
\newblock {\em Fourier analysis on local fields}.
\newblock Princeton University Press, Princeton, N.J., University of Tokyo
  Press, Tokyo, 1975.

\bibitem{TZ2018}
A.~Torresblanca-Badillo and W.A.~Z\'{u}\ {n}iga Galindo.
\newblock Ultrametric diffusion, exponential landscapes, and the first passage
  time problem.
\newblock {\em Acta Appl Math}, 157(1):93--116, 2018.

\bibitem{vdP1992}
M.~{van der Put}.
\newblock Discrete groups, {Mumford} curves and theta functions.
\newblock {\em Annales de la faculté des sciences de Toulouse 6 e série},
  1(3):399--438, 1992.

\bibitem{VVZ1994}
Vladimirov V.S., Volovich I.V., and Zelenov E.I.
\newblock {\em $p$-adic Analysis and mathematical physics}.
\newblock Series on Soviet and East European Mathematics, 1. World Scientific
  Publishing Co., Inc., River Edge, NJ, 1994.

\bibitem{Zuniga2015}
W.A. Zúñiga-Galindo.
\newblock The non-{Archimedean} stochastic heat equation driven by {Gaussian}
  noise.
\newblock {\em J. Fourier Anal. Appl.}, 21:600--627, 2015.

\bibitem{Zuniga2020}
W.A. Zúñiga-Galindo.
\newblock Reaction-diffusion equations on complex networks and {Turing}
  patterns via $p$-adic analysis.
\newblock {\em Journal of Mathematical Analysis and Applications},
  491(1):124239, 2020.

\end{thebibliography}

\end{document}